\newtheorem{theorem}{Theorem}[section]
\newtheorem{lemma}[theorem]{Lemma}
\theoremstyle{definition}
\newtheorem{annahme}[theorem]{Assumptions}
\newtheorem{remark}[theorem]{Remark}
\newtheorem*{notation}{Notation}
\numberwithin{equation}{section}
\DeclareMathAlphabet\mathbfcal{OMS}{cmsy}{b}{n}
\DeclareMathOperator*{\esssup}{ess\,sup}
\DeclarePairedDelimiter{\norm}{\lVert}{\rVert}
\NewDocumentCommand{\norml}{ s O{} m m }{%
	\IfBooleanTF{#1}{\norm*{#4}}{\norm[#2]{#4}}_{L^{#3}}%
}
\NewDocumentCommand{\normL}{ s O{} m m }{%
	\IfBooleanTF{#1}{\norm*{#4}}{\norm[#2]{#4}}_{\L^{#3}}%
}
\NewDocumentCommand{\normw}{ s O{} m m m }{%
	\IfBooleanTF{#1}{\norm*{#4}}{\norm[#2]{#5}}_{W^{#3,#4}}%
}
\NewDocumentCommand{\normW}{ s O{} m m m }{%
	\IfBooleanTF{#1}{\norm*{#4}}{\norm[#2]{#5}}_{\W^{#3,#4}}%
}
\NewDocumentCommand{\normh}{ s O{} m m }{
	\IfBooleanTF{#1}{\norm*{#4}}{\norm[#2]{#4}}_{H^{#3}}
}
\NewDocumentCommand{\normH}{ s O{} m m }{%
	\IfBooleanTF{#1}{\norm*{#4}}{\norm[#2]{#4}}_{\H^{#3}}%
}
\newcommand{\inndual}[2]{\left \langle #1\mkern2mu{,}\mkern2mu #2 \right \rangle}
\newcommand{\inn}[2]{\left( #1\mkern2mu{,}\mkern2mu #2 \right)}
\newcommand{\R}{\mathbb{R}}
\newcommand{\mc}{\mkern2mu{,}\mkern2mu}
\def\R{\mathbb R}
\def\n{\mathbf n}
\def\nV{{\mathbf n}_V}
\def\a{\mathbf a}
\def\b{\mathbf b}
\def\L{\mathbf L}
\def\H{\mathbf H}
\def\v{\mathbf{v}}
\def\B{\mathbf{B}}
\def\T{\mathbf{T}}
\def\I{\mathbf{I}}
\def\D{{\mathbf{D}}}
\def\W{{\mathbf W}}
\def\m{{\mathbf{m}}}
\def\oldSigma{\partial\Omega\times(0,T)}
\def\intT{\int_{0}^{T}}
\def\intO{\int_\Omega}
\def\ddt{\frac{\mathrm d}{\mathrm dt}}
\def\dy{\;\mathrm dy}
\def\dz{\;\mathrm dz}
\def\dt{\;\mathrm dt}
\def\dx{\; \mathrm d \mathcal{L}^{d}}
\def\dH{\; \mathrm d \mathcal{H}^{d-1}}
\def\textdx{\mathrm d \mathcal{L}^{d}}
\def\textdH{\mathrm d \mathcal{H}^{d-1}}
\def\del{\partial}
\def\delxk{\partial_{x_k}}
\def\delt{\partial_{t}}
\def\d{{\mathrm{d}}}
\def\grad{\nabla}
\def\laplace{\Delta}
\def\divergence{\textnormal{div}}
\def\splus{\hspace{-1pt}+\hspace{-1pt}}
\def\sminus{\hspace{-2pt}-\hspace{-2pt}}
\begin{document}
	
	\title{Cahn--Hilliard--Brinkman systems for tumour growth}
	\author{Matthias Ebenbeck \footnotemark[1] \and Harald Garcke \footnotemark[2]\and Robert N\"urnberg\footnotemark[3]}
	\date{}
	
	\renewcommand{\thefootnote}{\fnsymbol{footnote}}
	\footnotetext[1]{Fakult\"at f\"ur Mathematik, Universit\"at Regensburg, 93040 Regensburg, Germany ({\tt matthias.ebenbeck@mathematik.uni-regensburg.de}).}
	\footnotetext[1]{Fakult\"at f\"ur Mathematik, Universit\"at Regensburg, 93040 Regensburg, Germany ({\tt harald.garcke@mathematik.uni-regensburg.de}).}
	\footnotetext[3]{Department of Mathematics, Imperial College London, London, SW7 2AZ, UK
		({\tt robert.nurnberg@imperial.ac.uk}).}
	
	\maketitle
	
	\begin{abstract}
		A phase field model for tumour growth is introduced that is based on a
		Brinkman law for convective velocity fields. The model couples a
		convective Cahn--Hilliard equation for the evolution of the tumour to
		a reaction-diffusion-advection equation for a nutrient and to a
		Brinkman--Stokes type law for the fluid velocity. The model is derived
		from basic thermodynamical principles, sharp interface limits are
		derived by matched asymptotics and an existence theory is presented
		for the case of a mobility which degenerates in one phase leading to a
		degenerate parabolic equation of fourth order. Finally numerical results describe
		qualitative features of the solutions and illustrate
		instabilities in certain situations. 
	\end{abstract}
	
	\noindent {\bf Key words.} Tumour growth, Cahn--Hilliard equation, phase field model,
	Brinkman model, existence, singular limit, finite elements. \\
	
	\noindent {\bf AMS subject classification.} 35K35, 35K57, 35Q92, 35R35, 
	35C20, 65M60, 92C42

	\section{Introduction}
	
	Classical continuum models for tumour growth use free boundary
	problems to describe the growth of the tumour. These models go back to the
	seminal work of Greenspan, \cite{Greenspan}, who modelled the tissue as a
	porous medium and used Darcy's law for the convective velocity
	field. This modelling approach was subsequently  further developed by many
	authors, see  \cite{AmbrosiPreziosi2, ByrneChaplain2} and the reviews
	\cite{BellomoLiMaini,Friedman2,RooseChapmanMaini}.  
	Later also Stokes flow has been used to model velocities in
	tumour growth \cite{FranksKing2,FranksKing, Friedman,
		FriedmanHu}. This is justified, since typically tissue does not have the
	characteristics of a porous medium. As tumours might undergo
	morphological instabilities like fingering or folding, see,
	e.\,g., \cite{CristiniLowengrubNie,CristiniEtAl}, free boundary
	problems in a classical formulation have their limitations, because
	changes in topology have to be dealt with.
	
	To overcome these difficulties, it has turned out that diffuse interface
	models, where the sharp
	interface is replaced by a narrow transition layer and the tumour is
	treated as a collection of
	cells, are a good alternative modelling strategy to describe the evolution and interactions
	of different species. In
	contrast to free boundary problems, there is no need to explicitly track
	the interface, or to enforce
	complicated boundary conditions across the interface, see, e.\,g.,
	\cite{WiseLowengrubFrieboesCristini}.
	Moreover, tissue interfaces
	may be more realistically represented by the diffuse interface framework,
	since phase boundaries
	between tissues may not be well delineated, see \cite{FrieboesEtAl}. These models are
	typically based on a
	multiphase approach, on balance laws for the single constituents, like
	mass and momentum
	balance, on constitutive laws and on thermodynamic principles. Several
	additional variables
	describing the extracellular matrix (ECM), growth factors or inhibitors
	can be incorporated into
	these models, and biological mechanisms like chemotaxis, apoptosis or
	necrosis and effects of
	stress, plasticity or viscoelasticity can be included, see
	\cite{CristiniLowengrub, 
		EylesKingStyles, GarckeLamNuernbergSitka,GarckeLamSignori1, GarckeLamSitkaStyles,  
		HawkinsZeeKristofferOdenTinsley, OdenHawkinsPrudhomme}. 
	
	In most of the earlier phase field models in the literature, flow velocity is modelled by Darcy's law, see
	\cite{GarckeLam1, GarckeLamSitkaStyles, JiangWZ15,LowengrubTitiZhao, WiseLowengrubFrieboesCristini}.
	However, often tissue cannot be modelled as a porous medium, see \cite{FranksKing2, FranksKing,Friedman}, and hence models based on Stokes or Brinkman flow
	have been suggested, see  \cite{EbenbeckGarcke, EbenbeckGarcke2, FritzEtAl}. It is the goal of this work to derive these models systematically
	using thermodynamic principles, and to give several examples of constitutive laws which are relevant for applications. In these models, cell adhesion
	is modelled with the help of a Ginzburg--Landau energy, see also \cite{CristiniLowengrub}, and the resulting equation for the growth of the tumour turns out to be a convective
	Cahn--Hilliard equation with sources related to proliferation (cell growth) and apoptosis (controlled cell death). In phase field models, the interface between
	the tumour and the healthy region is modelled with the help of a diffuse interface, which has a thickness that is proportional
	to a small positive parameter $\epsilon$. A further goal of this paper is to derive  sharp interface problems in the  limit as $\epsilon$ tends to zero. Here we use the
	method of formally matched asymptotic expansions to analyse the limit. In applications to tumour growth, the mobility in the Cahn--Hilliard equation typically degenerates
	in one phase (see, e.\,g., \cite{CristiniLiLowengrubWise,HawkinsZeeKristofferOdenTinsley,WiseLowengrubFrieboesCristini}), and the resulting Cahn--Hilliard equation is a degenerate Cahn--Hilliard equation, which is notoriously difficult to analyse. 
	Using entropy-like estimates, we will show existence of weak solutions, which is non-standard due to source terms in the Cahn--Hilliard equation, see also
	\cite{AgostiAntoniettiCiarlettaGrasselliVerani,FrigeriLamRocca, PerthamePoulain} 
	for similar results.
	The Brinkman model has Darcy's law and Stokes flow as singular limits. In
	numerical simulations we will analyse these limiting behaviours, as well
	as several qualitative features of the model, which include in
	particular several unstable growing fronts.
	It will turn out that for certain situations in which chemotaxis is
	present, unstable fronts appear, and we will also show that certain wave
	lengths are more unstable than others.
	
	Following this introduction, we first of all derive the governing
	equations. 
	In Section 3, we will discuss several additional modelling aspects like,
	for example, specific forms of source terms, pressure reformulations, a
	general energy inequality, boundary conditions and
	non-dimensionalisation arguments.
	Then we will use the method of formally matched asymptotics to derive
	some sharp interface models for tumour growth, which are related to free
	boundary problems that have been studied earlier in the literature. In Section 5, we present analytical results for a model with one-sided degenerate mobility and singular potential.
	In Section 6, we will show numerical simulations which give further insights into the model and the influence of different parameters. Finally, we want to fix the notation for this work:
	\begin{notation}
		We denote by $\Omega\subset\R^d$, $d=2,3$, a bounded domain with boundary $\del\Omega$ and outer unit normal $\n$, and by $T>0$ a fixed final time. We denote $Q\coloneqq \Omega\times (0,T)$. 
		For a (real) Banach space $X$ we denote by $\norm{\cdot}_X$ its norm, by $X^*$ the dual space, and by $\langle \,\cdot\mc\cdot\, \rangle_X$ the duality pairing between $X^*$ and $X$. By $(\,\cdot\mc\cdot\,)$ we denote the $L^2$ inner product in $\Omega$. We define the scalar product of two matrices by
		\begin{equation*}
		\mathbf{A}\colon \B\coloneqq \sum_{j,k=1}^{d}a_{jk}b_{jk}\quad\text{for } \mathbf{A},\B\in\R^{d\times d},
		\end{equation*}
		and the divergence of a matrix-valued function $\mathbf{A}\colon \R^d\to \R^{d\times d}$ by
		\begin{equation*}
		\divergence(\mathbf{A})\coloneqq \left(\sum_{k=1}^{d}\delxk a_{jk}(x)\right)_{j=1}^d.
		\end{equation*}
		For the standard Lebesgue and Sobolev spaces with $1\leq p\leq \infty$, $k>0$,  we use the notation $L^p\coloneqq L^p(\Omega)$ and $W^{k,p}\coloneqq W^{k,p}(\Omega)$ with norms $\norml{p}{\cdot}$ and $\normw{k}{p}{\cdot}$, respectively.  In the case $p=2$ we use $H^k\coloneqq W^{k,2}$ and the norm $\normh{k}{\cdot}$. We will denote the Lebesgue spaces on the boundary by $L^p(\del\Omega)$ with corresponding norm $\norm{\cdot}_{L^p(\del\Omega)}$. We denote the space $W_0^{k,p}$ as the completion of $C_0^{\infty}(\Omega)$ with respect to the $W^{k,p}$-norm and we set $H_0^k\coloneqq W_0^{k,2}$. By $\mathbf{L}^p$, $\mathbf{W}^{k,p}$, $\mathbf{H}^k$, $\mathbf{L}^p(\del\Omega)$, $\W_0^{k,p}$ and $\H_0^k$ we will denote the corresponding spaces of vector valued and matrix valued functions. We denote the $\L^2$ inner product of two vectors $\a,\b\in \L^2$ or two matrices $\mathbf{A},\mathbf{B}\in \L^2$ by $\inn{\a}{\b}\coloneqq \sum_{i=1}^{d}\inn{a_i}{b_i}$ and $\inn{\mathbf{A}}{\mathbf{B}}\coloneqq \sum_{j,k=1}^{d}\inn{a_{jk}}{b_{jk}}$, respectively.
		For Bochner spaces we use the notation $L^p(X)\coloneqq L^p(0,T;X)$ for a Banach space $X$ with $p\in [1,\infty]$. We define
		\begin{equation*}
		\norm{\cdot}_{A\cap B}\coloneqq \norm{\cdot}_A + \norm{\cdot}_B
		\end{equation*}
		for two or more Banach spaces $A$ and $B$.
		Moreover, we introduce the function spaces
		\begin{align*}
		&L_0^2\coloneqq \{w\in L^2\colon \inn{w}{1}=0\},\quad (H^1)_0^*\coloneqq \left\{f\in (H^1)^*\colon \inndual{f}{1}_{H^1} =0\right\} ,\\
		&H_N^2\coloneqq \left\{w\in H^2\colon \grad w\cdot\n = 0 \text{ on } \del\Omega\right\}.
		\end{align*}
		For problems related to the Stokes equation we define
		\begin{equation} \label{eq:boldV}
		\bm{\mathcal{V}}\coloneqq \left\{ \v\in C_0^{\infty}(\Omega;\R^d)\,\colon \,\divergence(\v)=0\right\},\quad \H\coloneqq \overline{\bm{\mathcal{V}}}^{\,\L^2}, \quad\mathbf{V}\coloneqq \overline{\bm{\mathcal{V}}}^{\,\H^1}.
		\end{equation}
		
	\end{notation}
	
	\section{Derivation of the model}
	Using basic thermodynamic principles and the Lagrange multiplier method of Liu and M\"uller, we will derive a general Cahn--Hilliard--Brinkman model for tumour growth including effects like, for example, diffusion, chemotaxis, active transport,  proliferation and apoptosis. This model will serve as the basis for this work, and several variants of this model will be analysed later. We use basic ideas of continuum mechanics, see, e.\,g., \cite{EckGarckeKnabner,GurtinFriedAnand}, and allow for a partial mixing of two components, see also \cite{AbelsGarckeGrun,GarckeLamSitkaStyles}. \\ [1ex]
	Let us consider a mixture consisting of tumour and healthy cells. We denote the first and second component as the healthy and tumour tissues, respectively. Furthermore, we introduce $\rho_i$, $i=1,2$, (actual mass of the component matter per volume in the mixture) and $\bar{\rho}_i$, $i=1,2$ (mass density of a pure component $i$). The mass density of the mixture is denoted by $\rho\coloneqq \rho_1+\rho_2$. We define 
	\begin{equation*}
	u_i = \frac{\rho_i}{\bar{\rho}_i}
	\end{equation*}
	as the volume fraction of component $i$ and
	\begin{equation*}
	c_i = \frac{\rho_i}{\rho}
	\end{equation*}
	as the mass concentration of the $i$-th component, and we note that $c_1+c_2=1$. 
	Physically we expect $\rho_i\in [0,\bar{\rho}_i]$ and thus $u_i\in [0,1]$. By $\v _i$, $i=1,2$, we denote the velocity of component $i$ and we make the following assumptions on our model.
	\begin{enumerate}
		\item[(i)] The excess volume due to mixing of the components is zero, i.\,e.,
		\begin{equation}
		\label{3_DER_1} u_1+u_2 = 1.
		\end{equation}
		\item[(ii)] We allow for mass exchange between the two components. Growth of the tumour is represented by mass transfer of healthy to tumour tissue and vice versa.
		\item[(iii)] We choose a volume-averaged mixture velocity, i.\,e.,
		\begin{equation}
		\label{3_DER_2}\v \coloneqq u_1\v _1+u_2\v _2.
		\end{equation}
		\item[(iv)] We assume the existence of a general chemical species acting as a nutrient for the tumour, like, for example, oxygen or glucose. The concentration of this species is denoted by $\sigma$ and it is transported by the velocity $\v $ and a diffusive flux $\textbf{J}_{\sigma}$.
	\end{enumerate}
	We remark that the choice of the mixture velocity is in contrast to \cite{LowengrubTruskinovsky}, where a barycentric/mass-averaged mixture velocity $\tilde{\v }\coloneqq  c_1\v _1+c_2\v _2$ was used, leading to a more complicated expression for the continuity equation.
	
	\subsection{Balance laws}
	We now study the balance laws for mass and momentum.
	\subsubsection{Balance of mass}
	The mass balance law in its local form for the two components is given by
	\begin{align}
	\label{3_DER_3a}\delt\rho_i + \divergence(\rho_i\v_i) = \Gamma_i,\quad i=1,2,
	\end{align}
	with source or sink terms $\Gamma_i$, $i=1,2$. 
	Dividing \eqref{3_DER_3a} by $\bar{\rho}_i$, $i=1,2$, we obtain the identities
	\begin{equation}
	\label{3_DER_4}\delt u_i + \divergence(u_i\v _i)=\frac{\Gamma_i}{\bar{\rho}_i},\quad i=1,2.
	\end{equation}
	Using \eqref{3_DER_1}, \eqref{3_DER_2} and \eqref{3_DER_4} yields
	\begin{equation}
	\label{3_DER_5}\divergence(\v ) = \divergence(u_1\v _1)+\divergence(u_2\v _2) = \sum_{i=1}^{2}\left(\frac{\Gamma_i}{\bar{\rho}_i}-\delt u_i\right) = \frac{\Gamma_1}{\bar{\rho}_1}+\frac{\Gamma_2}{\bar{\rho}_2}\eqqcolon \Gamma_{\v}.
	\end{equation}
	We introduce the fluxes
	\begin{equation*}
	\textbf{J}_i\coloneqq \rho_i(\v _i-\v ),\quad \mathbfcal{J}\coloneqq \textbf{J}_1+\textbf{J}_2,\quad\textbf{J}\coloneqq -\frac{1}{\bar{\rho}_1}\textbf{J}_1+\frac{1}{\bar{\rho}_2}\textbf{J}_2,
	\end{equation*}
	where $\textbf{J}_i$ describes the remaining diffusive flux after subtracting the flux resulting from transport along the mixture velocity. Using the identity 
	\begin{equation*}
	\mathbfcal{J} + \rho\v  = \textbf{J}_1+\textbf{J}_2 + \rho\v  = \rho_1\v _1+\rho_2\v _2
	\end{equation*}
	in conjunction with \eqref{3_DER_3a}, the equation for the mixture density reads
	\begin{equation}
	\label{3_DER_6}\delt\rho + \divergence(\rho_1\v _1+\rho_2\v _2) = \delt\rho + \divergence(\rho\v +\mathbfcal{J}) = \Gamma_1+\Gamma_2.
	\end{equation}
	In particular, we see that the flux of the mixture is decomposed into one part representing mathematical transport along the mixture velocity, and another part describing additional fluxes. In some models it is assumed that there is no gain or loss of mass locally, which is the case if $\Gamma_1 = -\Gamma_2$ in \eqref{3_DER_6}.
	From now on we denote by $\varphi\coloneqq u_2-u_1$ the difference in volume fractions of the two components. Recalling $\rho_i=\bar{\rho}_iu_i$ and using the identity
	\begin{equation*}
	\divergence(u_i\v _i) = \divergence\left(\frac{\rho_i}{\bar{\rho}_i}\v _i\right) = \divergence\left(\frac{\rho_i}{\bar{\rho}_i}(\v _i-\v +\v )\right) = \frac{1}{\bar{\rho}_i}\divergence(\textbf{J}_i) + \divergence(u_i\v ),
	\end{equation*}
	from \eqref{3_DER_4} we obtain
	\begin{equation*}
	\delt u_i + \frac{1}{\bar{\rho}_i}\divergence(\textbf{J}_i)+\divergence(u_i\v ) = \frac{\Gamma_i}{\bar{\rho_i}}.
	\end{equation*}
	Subtracting the equation for $u_1$ from the equation for $u_2$ yields
	\begin{equation}
	\label{3_DER_7}\delt\varphi + \divergence(\varphi\v )+\divergence(\textbf{J}) = \frac{\Gamma_2}{\bar{\rho}_2}-\frac{\Gamma_1}{\bar{\rho}_1}\eqqcolon\Gamma_{\varphi}.
	\end{equation}
	For the nutrient we postulate the balance law
	\begin{equation}
	\label{3_DER_8}\delt\sigma + \divergence(\sigma\v ) +\divergence\mathbf{J}_{\sigma} = -\Gamma_{\sigma},
	\end{equation}
	where $\Gamma_{\sigma}$ is a term related to sources or sinks, $\sigma\v $ models transport by the volume-averaged velocity and $\mathbf{J}_{\sigma}$ represents other transport mechanisms. 
	\subsubsection{Balance of linear momentum:}
	We make the following assumptions for our model.
	\begin{enumerate}
		\item[(i)] As in \cite{AbelsGarckeGrun}, we assume that
		the mixture with volume-averaged velocity $\v $ satisfies the balance law of linear momentum of continuum mechanics.
		\item[(ii)] We assume that inertial forces are negligible, which can be justified as the Reynolds number for biological processes like tumour growth is usually very small. Since gravity plays no role in our model of interest, and since other body forces are difficult to imagine, we neglect body forces.
		\item[(iii)]Surface forces are represented by a stress tensor $\T$, and we assume an additional source $\m$ in the momentum balance equation, which could for example represent momentum supply due to interaction forces in a porous medium, see, e.\,g., \cite{SrinivasanRajagopal}.
		\item[(iv)] We assume that the stress tensor $\T$ is symmetric, isotropic and can depend on $\grad\v$, $\varphi$, $\sigma$ and $\grad\varphi$.
	\end{enumerate}
	With all these assumptions, the balance of linear momentum takes the form
	\begin{equation}
	\label{3_DER_9}\divergence(\T) + \m = \mathbf 0,
	\end{equation}
	where $\T$ and $\m$ have to be specified by constitutive assumptions.

	\subsection{Energy inequality and the Lagrange multiplier method.}
	In an isothermal situation, the second law of thermodynamics is formulated as an energy inequality, see, e.\,g., \cite{EckGarckeKnabner, Gurtin2}.
	Thus the specific form of the stress tensor and the fluxes for $\varphi$ and  $\sigma$ depend on the choice of a suitable system energy. 
	Since we have neglected inertia effects in the momentum balance law, we assume that there is no contribution of kinetic energy. For a model including inertia effects we refer to \cite{AbelsGarckeGrun}, where the authors deduce a Navier--Stokes--Cahn--Hilliard system. We postulate a free energy of the form
	\begin{equation*} 
	e = \hat{e}(\varphi,\grad \varphi,\sigma).
	\end{equation*}
	We denote by $V(t)\subset \Omega$ an arbitrary volume which is transported with the fluid velocity. A discussion of the situation when source terms are present can be found in, e.\,g., \cite[Chap. 62]{GurtinFriedAnand}. Using the second law of thermodynamics in an isothermal situation, the following energy inequality has to hold
	\begin{align}
	\nonumber\underbrace{\frac{\d}{\d t}\int_{V(t)}e(\varphi,\grad \varphi,\sigma)\dx}_{\begin{array}{l}\text{Change of}\\ \text{energy}\end{array}}&\leq \underbrace{-\int_{\del  V(t)}\mathbf{J}_e\cdot\nV\dH}_{\begin{array}{l}\text{Energy flux across}\\ \text{the boundary}\end{array}} + \underbrace{\int_{\del  V(t)}(\T \nV)\cdot\v \dH}_{\begin{array}{l}\text{Working due to}\\ \text{macroscopic stresses}\end{array}}  \\
	\label{3_DER_10}&\quad +\underbrace{\int_{V(t)}c_{\v}\Gamma_{\v} + c_{\varphi}\Gamma_{\varphi} + c_{\sigma}(-\Gamma_{\sigma}) \dx}_{\begin{array}{c}\text{Supply of energy}\end{array}},
	\end{align}
	where $\nV$ is the outer unit normal to $\del V(t)$, $\mathbf{J}_e$ is an energy flux yet to be determined, and $\textdx$ and $\textdH$ denote integration with respect to the Lebesgue measure and the $(d-1)$-dimensional Hausdorff measure in $\R^d$, respectively. Moreover, $c_{\v}$, $c_{\varphi}$ and $c_{\sigma}$ are unknown multipliers which have to be specified. We observe that the second boundary term describes working due to the macroscopic stresses, see, e.\,g., \cite{AbelsGarckeGrun,EckGarckeKnabner,GurtinFriedAnand}.\newline 
	We introduce the material derivative of a function $f$ by
	\begin{equation*}
	\delt^{\bullet}f \coloneqq \delt f+\grad  f\cdot \v.
	\end{equation*}
	Following the arguments in, e.\,g., \cite{AbelsGarckeGrun,GarckeLamSitkaStyles}, we now apply the Lagrange multiplier method of Liu and M\"uller, which has been developed in \cite{Liu}. More precisely, we introduce Lagrange multipliers $\lambda_{\v}$, $\lambda_{\varphi}$ and $\lambda_{\sigma}$ for the equations \eqref{3_DER_5}, \eqref{3_DER_7} and \eqref{3_DER_8}. The following identity can be easily verified upon using the momentum balance equation:
	\begin{equation*}
	-\int_{\del  V(t)}(\T \nV)\cdot\v \dH = -\int_{V(t)}\divergence(\T )\cdot\v  + \T \colon\grad  \v \dx = \int_{V(t)} \m\cdot\v - \T\colon\grad\v\dx.
	\end{equation*}
	Therefore, using Reynold's transport theorem, see \cite{EckGarckeKnabner,GurtinFriedAnand}, \eqref{3_DER_10} and the identity
	\begin{align*}
	&\delt^{\bullet}e= \frac{\del  e}{\del \varphi}\delt^{\bullet}\varphi + \frac{\del  e}{\del \grad \varphi}\delt^{\bullet}(\grad \varphi)+\frac{\del  e}{\del \sigma}\delt^{\bullet}\sigma,
	\end{align*}
	the following local dissipation inequality has to be fulfilled for arbitrary values of $(\varphi,\sigma,\grad\varphi,\grad\sigma,\v,\Gamma_{\v}, \Gamma_{\varphi},\Gamma_{\sigma},\delt^{\bullet}\varphi,\delt^{\bullet}\sigma)$ 
	\begin{align*}
	-\mathcal{D}_{\text{iss}}&\coloneqq \delt^{\bullet}e + e\divergence(\v) + \divergence(\mathbf{J}_e)-\T \colon\grad \v + \m \cdot\v -c_{\v}\Gamma_{\v} - c_{\varphi}\Gamma_{\varphi} + c_{\sigma}\Gamma_{\sigma}\\
	&\quad -\lambda_{\v}(\divergence(\v)-\Gamma_{\v}) \\
	&\quad -\lambda_{\varphi}(\delt^{\bullet}\varphi+\varphi\divergence(\v )+\divergence(\mathbf{J}_{\varphi})-\Gamma_{\varphi})\\
	&\quad - \lambda_{\sigma}(\delt^{\bullet}\sigma +\sigma\divergence(\v)+\divergence(\mathbf{J}_{\sigma})+\Gamma_{\sigma})\leq 0.
	\end{align*}
	Using the identity
	\begin{equation*}
	\del _{x_j}(\delt^{\bullet}\varphi) = \delt\del _{x_j}\varphi +\v \cdot\grad (\del _{x_j}\varphi)+\del _{x_j}\v \cdot\grad \varphi = \delt^{\bullet}(\del _{x_j}\varphi)+\del _{x_j}\v \cdot\grad \varphi
	\end{equation*}
	we calculate
	\begin{equation*}
	\divergence\left(\delt^{\bullet}\varphi\frac{\del  e}{\del \grad \varphi}\right) = \delt^{\bullet}\varphi\divergence\left(\frac{\del  e}{\del \grad \varphi}\right) + \delt^{\bullet}(\grad \varphi)\cdot\frac{\del  e}{\del \grad \varphi}+\grad \v \colon\left(\grad \varphi\otimes \frac{\del  e}{\del \grad \varphi}\right).
	\end{equation*}
	Therefore, we can rewrite $-\mathcal{D}_{\text{iss}}$ as 
	\begin{align}
	\nonumber-\mathcal{D}_{\text{iss}} &= \divergence\left(\mathbf{J}_e-\lambda_{\varphi}\mathbf{J}_{\varphi}-\lambda_{\sigma}\mathbf{J}_{\sigma}+\delt^{\bullet}\varphi\frac{\del  e}{\del \grad \varphi}\right)+\grad \lambda_{\varphi}\cdot\mathbf{J}_{\varphi}+\grad \lambda_{\sigma}\cdot\mathbf{J}_{\sigma}\\
	\nonumber &\quad +\delt^{\bullet}\varphi\left(\frac{\del  e}{\del \varphi}-\divergence\left(\frac{\del  e}{\del \grad \varphi}\right)-\lambda_{\varphi}\right) + \delt^{\bullet}\sigma\left(\frac{\del  e}{\del \sigma}-\lambda_{\sigma}\right)\\
	\nonumber &\quad -\left(\T +\left(\grad \varphi\otimes \frac{\del  e}{\del \grad \varphi}\right)\right)\colon \grad \v  + \m \cdot\v \\
	\nonumber&\quad +(c_{\sigma}-\lambda_{\sigma})\Gamma_{\sigma}+(\lambda_{\v}-c_{\v})\Gamma_{\v} + (\lambda_{\varphi}-c_{\varphi})\Gamma_{\varphi}\\
	\label{3_DER_12}&\quad +\left(e-\lambda_{\varphi}\varphi-\lambda_{\sigma}\sigma-\lambda_{\v}\right)\divergence(\v)\leq 0.
	\end{align}
	Finally, we define the chemical potential as
	\begin{equation*}
	\mu\coloneqq \frac{\del e}{\del \varphi}-\divergence\left(\frac{\del e}{\del\grad \varphi}\right).
	\end{equation*} 
	\subsection{Constitutive assumptions:}
	To fulfil \eqref{3_DER_12}, we can argue as in, e.\,g., \cite{AbelsGarckeGrun,GarckeLamSitkaStyles}, and we make the following constitutive assumptions
	\begin{subequations}
		\begin{align}
		\label{3_DER_13a} \mathbf{J}_e &= \lambda_{\sigma}\mathbf{J}_{\sigma}+\lambda_{\varphi}\mathbf{J}_{\varphi}-\delt^{\bullet}\varphi\frac{\del  e}{\del \grad \varphi},\quad c_{\v}=\lambda_{\v},\\
		\label{3_DER_13b}c_{\varphi} &=\lambda_{\varphi}= \frac{\del  e}{\del \varphi}-\divergence\left(\frac{\del  e}{\del \grad \varphi}\right)=\mu,\quad c_{\sigma}=\lambda_{\sigma} = \frac{\del  e}{\del \sigma},\\
		\label{3_DER_13c}\mathbf{J}_{\varphi}&=-m(\varphi)\grad \mu,\quad \mathbf{J}_{\sigma}=-n(\varphi)\grad  \left(\frac{\del  e}{\del \sigma}\right),
		\end{align}
	\end{subequations}
	where $m(\varphi)$ and $n(\varphi)$ are non-negative mobilities corresponding to a generalised Fick's law (see \cite{AbelsGarckeGrun}). In principle, $m(\cdot)$ and $n(\cdot)$ could also depend on additional variables like $\mu$ and $\sigma$. With these choices
	\eqref{3_DER_12} simplifies to
	\begin{align}
	\label{3_DER_14} \hspace{-1pt}-\left(\T +\left(\grad \varphi\otimes \frac{\del  e}{\del \grad \varphi}\right)\right)\colon \grad \v  + \m \cdot\v +\left(e-\lambda_{\varphi}\varphi-\lambda_{\sigma}\sigma-\lambda_{\v}\right)\divergence(\v)\leq 0.
	\end{align}
	We now introduce the unknown pressure $p$ and we rewrite the stress tensor as 
	\begin{equation}
	\label{3_ST_REF_1}\T = \mathbf{S}-p\I,\quad\text{i.\,e.}\quad \mathbf{S} = \T+p\I.
	\end{equation}
	An easy calculation yields the identity
	\begin{equation*}
	\left(\grad \varphi\otimes\frac{\del  e}{\del \grad \varphi}\right)\colon \frac{1}{2}(\grad \v -(\grad \v )^{\intercal}) = \frac{1}{2}\left(\grad \varphi\otimes\frac{\del  e}{\del \grad \varphi}-\frac{\del  e}{\del \grad \varphi}\otimes\grad \varphi\right)\colon \frac{1}{2}(\grad \v -(\grad\v)^{\intercal}).
	\end{equation*}
	Since the skew symmetric part of $\grad \v $ can attain arbitrary values (see, e.\,g., \cite{AbelsGarckeGrun}), and by the symmetry of $\T$, cf. 2.\,1.\,2.\,\textnormal{(iv)}, we conclude from \eqref{3_DER_14} that
	\begin{equation*}
	\grad \varphi\otimes\frac{\del  e}{\del \grad \varphi}=\frac{\del  e}{\del \grad \varphi}\otimes\grad \varphi ,
	\end{equation*}
	which implies
	\begin{equation*}
	\left|\frac{\del  e}{\del \grad \varphi}\right|^2|\grad \varphi|^2 = \left(\grad \varphi\cdot\frac{\del  e}{\del \grad \varphi}\right)^2.
	\end{equation*}
	The last identity yields
	\begin{equation*} 
	\frac{\del  e}{\del \grad \varphi}(\varphi,\grad \varphi,\sigma) = a(\varphi,\grad \varphi,\sigma)\grad \varphi
	\end{equation*}
	for some real valued function $a(\varphi,\grad \varphi,\sigma)$.
	By the symmetry of $\mathbf{S}$ and using $\I\colon\D\v = \textnormal{tr}(\D\v)$, we obtain $(\mathbf{S}-p\I)\colon\grad \v = \mathbf{S}\colon\D\v -p\divergence(\v)$. 
	Together with \eqref{3_ST_REF_1}, this implies
	\begin{equation*}
	\T\colon \grad\v = \mathbf{S}\colon\D\v-p \,\divergence(\v).
	\end{equation*}
	This identity allows us to rewrite \eqref{3_DER_14} as
	\begin{align*}
	-\left(\mathbf{S}+\left(\grad \varphi\otimes a(\varphi,\grad \varphi,\sigma)\grad \varphi\right)\right)\colon \D\v + \m \cdot\v  +\left(e-\lambda_{\varphi}\varphi-\lambda_{\sigma}\sigma+p-\lambda_{\v}\right)\divergence(\v)\leq 0.
	\end{align*}
	In order to control the mass exchange term we set
	\begin{equation*}
	\lambda_{\v} \coloneqq e-\lambda_{\varphi}\varphi-\lambda_{\sigma}\sigma+p,
	\end{equation*}
	and therefore it remains to fulfil the inequality
	\begin{equation*}
	\big(\mathbf{S}+\left(\grad \varphi\otimes a(\varphi,\grad \varphi,\sigma)\grad \varphi\right)\big)\colon \D\v - \m \cdot\v \geq 0.
	\end{equation*}
	Similar as in, e.\,g., \cite{AbelsGarckeGrun}, and motivated by Newton's linear rheological law, we make the constitutive assumption
	\begin{equation*}
	\mathbf{S}+\grad \varphi\otimes a(\varphi,\grad \varphi,\sigma)\grad \varphi = 2\eta(\varphi)\D\v  +  \lambda(\varphi)\divergence(\v )\I,
	\end{equation*}
	where $\eta(\cdot)$ and $\lambda(\cdot)$ are non-negative functions referred to as shear and bulk viscosities. 
	This means that, on account of the last identity, the dissipation inequality \eqref{3_DER_12} holds provided
	\begin{equation*}
	- \m \cdot\v \geq 0.
	\end{equation*}
	A typical choice, see, e.\,g., \cite{OdenHawkinsPrudhomme,SrinivasanRajagopal}, is
	\begin{equation*}
	\m \coloneqq -\nu(\varphi)\v,
	\end{equation*}
	where $\nu(\cdot)$ represents the permeability and is also referred to as ``drag'' coefficient function.\\ [1ex]
	The energy flux $\mathbf{J}_e$ in \eqref{3_DER_13a} is chosen such that the divergence term in \eqref{3_DER_12} vanishes. It contains classical terms like $\mu\mathbf{J}_{\varphi}$ and $\frac{\del  e}{\del \sigma}\mathbf{J}_{\sigma}$, which describe energy flux due to mass diffusion, and the non-classical term $\delt^{\bullet}\varphi\frac{\del e}{\del \grad \varphi}$ describing working due to microscopic stresses. For more details see, e.\,g., \cite{AbelsGarckeGrun,GarckeLamSitkaStyles}.
	Collecting the results above, we arrive at the following dissipation inequality
	\begin{equation*}
	\mathcal{D}_{\text{iss}} = 2\eta(\varphi)|\D\v |^2 + \lambda(\varphi)(\divergence(\v ))^2 +\nu(\varphi)|\v |^2+ m(\varphi)|\grad \mu|^2 + n(\varphi)\left|\grad \frac{\del  e}{\del \sigma}\right|^2 \geq 0.
	\end{equation*}
	Hence dissipation is produced by the following processes: viscosity effects, changes in volume, dissipation at the pores of the mixture due to the flow, and diffusive transport induced by $\grad \mu$ and $\grad \frac{\del  e}{\del \sigma}$. 
	\subsection{The model equations:}
	{From} now on we assume a general energy of the form
	\begin{equation*}
	e(\varphi,\grad \varphi,\sigma) = f(\varphi,\grad \varphi) + N(\varphi,\sigma).
	\end{equation*}
	The first term accounts for adhesion energy of the diffuse interface, whereas the second term represents the energy contribution due to the presence of the nutrient and the interaction between the tumour tissue and the nutrients. For more details regarding the second energy term, we refer to \cite{GarckeLamSitkaStyles,HawkinsZeeKristofferOdenTinsley}. Furthermore, we assume that $f$ is of Ginzburg--Landau type, that is,
	\begin{equation*}
	f(\varphi,\grad \varphi) = \frac{\beta}{\epsilon}\psi(\varphi) + \frac{\beta\epsilon}{2}|\grad \varphi|^2,
	\end{equation*}
	where $\psi$ is a potential with minima at $\pm 1$, typically the classical double-well potential, and the parameter $\beta>0$ is a cell-cell
	adhesion parameter and $\epsilon>0$ is related to  the interfacial thickness.\newline
	With this choice we calculate
	\begin{equation*}
	\frac{\del  e}{\del \varphi} = \frac{\beta}{\epsilon}\psi'(\varphi) + N_{,\varphi}, \quad \frac{\del  e}{\del \grad \varphi} = \beta\epsilon\grad \varphi,\quad a(\varphi,\grad \varphi,\sigma) = \beta\epsilon,\quad\frac{\del  e}{\del \sigma} = N_{,\sigma},
	\end{equation*}
	where $N_{,\varphi}$ and $N_{,\sigma}$ denote the partial derivatives of $N(\varphi,\sigma)$ with respect to $\varphi$ and $\sigma$, respectively. \newline In the following we use the relation \eqref{3_ST_REF_1}. Recalling \eqref{3_DER_5}, \eqref{3_DER_7}-\eqref{3_DER_9} and using the constitutive assumptions, we obtain the following general Cahn--Hilliard--Brinkman model for tumour growth
	\begin{subequations}\label{3_MEQ}
		\begin{align}
		\label{3_MEQ_1}\divergence(\v )&=\Gamma_{\v },\\
		\label{3_MEQ_2}-\divergence(2\eta(\varphi)\D\v +\lambda(\varphi)\divergence(\v )\I) + \nu(\varphi)\v  + \grad  p &= -\divergence(\beta\epsilon \grad \varphi\otimes\grad \varphi),\\
		\label{3_MEQ_3}\delt\varphi + \divergence(\varphi\v ) &= \divergence(m(\varphi)\grad \mu)+\Gamma_{\varphi},\\
		\label{3_MEQ_4}\mu&= \tfrac{\beta}{\epsilon}\psi'(\varphi)-\beta\epsilon\laplace\varphi + N_{,\varphi},\\
		\label{3_MEQ_5}\delt\sigma + \divergence(\sigma\v ) &= \divergence(n(\varphi)\grad  N_{,\sigma}) - \Gamma_{\sigma},
		\end{align}
	\end{subequations}
	where
	\begin{equation*}
	\Gamma_{\v} = \frac{\Gamma_2}{\bar{\rho}_2}+\frac{\Gamma_1}{\bar{\rho}_1},\quad \Gamma_{\varphi} = \frac{\Gamma_2}{\bar{\rho}_2}-\frac{\Gamma_1}{\bar{\rho}_1}.
	\end{equation*}
	\section{Further aspects of modelling}
	\subsection{Specific source terms}
	We now outline specific choices of source terms that are commonly used in the literature.
	\begin{enumerate}
		\item[(i)] In some cases it is meaningful to assume no gain or loss of mass locally (see \eqref{3_DER_6}), and in this case we demand that
		\begin{equation*}
		\Gamma_2 = -\Gamma_1\eqqcolon \Gamma.
		\end{equation*}
		Then, there is a close relation between the source terms $\Gamma_{\v}$ and $\Gamma_{\varphi}$, given by
		\begin{equation}
		\label{3_SOURTE_TERM_1}\Gamma_{\varphi} = \frac{\Gamma_2}{\bar{\rho}_2}-\frac{\Gamma_1}{\bar{\rho}_1} = \left(\frac{1}{\bar{\rho}_1}+\frac{1}{\bar{\rho}_2}\right)\Gamma,\qquad \Gamma_{\v} = \frac{\Gamma_2}{\bar{\rho}_2}+\frac{\Gamma_1}{\bar{\rho}_1} = \left(\frac{1}{\bar{\rho}_2}-\frac{1}{\bar{\rho}_1}\right)\Gamma.
		\end{equation}
		In the following we set 
		\begin{equation}
		\label{3_SOURTE_TERM_2}\alpha\coloneqq \frac{1}{\bar{\rho}_2}-\frac{1}{\bar{\rho}_1},\qquad \beta\coloneqq \frac{1}{\bar{\rho}_1}+\frac{1}{\bar{\rho}_2}.
		\end{equation}
		\item[(ii)] A possible assumption for the source terms is linear kinetics (see, e.\,g., \cite{GarckeLam1,GarckeLamSitkaStyles}), and in this case one chooses
		\begin{equation}
		\label{3_SOURTE_TERM_3}\Gamma\coloneqq (\mathcal{P}\sigma-\mathcal{A})h(\varphi),\qquad \Gamma_{\sigma} = \mathcal{C}\sigma h(\varphi),
		\end{equation}
		where $\mathcal{P}$, $\mathcal{A}$ and $\mathcal{C}$ are non-negative constants related to proliferation, apoptosis and consumption. The function $h(\cdot)$ interpolates linearly between $h(-1)=0$ and $h(1)=1$ and can be extended constant outside of the interval $[-1,1]$. We refer to \cite{GarckeLamSitkaStyles} for the motivation of these specific source terms.
		\item[(iii)] Other authors use linear phenomenological laws for chemical reactions. For example, in \cite{HawkinsZeeKristofferOdenTinsley} it was suggested to take
		\begin{equation*}
		\Gamma_{\varphi} = \Gamma_{\sigma} = P(\varphi)(N_{,\sigma}-\mu)
		\end{equation*}
		for a non-negative proliferation function $P(\cdot)$. These kind of source terms have, e.\,g., been studied in \cite{ColliGilardiHilhorst,FrigeriGrasselliRocca}. In \cite{HawkinsZeeKristofferOdenTinsley} it has been proposed to take
		\begin{equation*}
		P(\varphi) = \begin{cases} 
		\delta P_0 (1+\varphi)&\text{if }\varphi\geq -1,\\
		0&\text{elsewhere}
		\end{cases}
		\end{equation*}
		for positive constants $\delta$ and $P_0$, where $\delta$ is usually very small. In contrast, the authors in \cite{HilhorstKampmannNguyenZee} considered a proliferation function given by
		\begin{equation*}
		P(\varphi) = \begin{cases}
		2\epsilon^{-1}P_0\sqrt{\psi(\varphi)}&\text{if }\varphi\in [-1,1],\\
		0&\text{elsewhere}.
		\end{cases}
		\end{equation*}
		\item[(iv)] Taking $\Gamma_1 = 0$ and $\Gamma = \Gamma_2$ one obtains
		\begin{equation*}
		\Gamma_{\varphi} = \Gamma_{\v} = \frac{1}{\bar{\rho}_2}\Gamma.
		\end{equation*}
		This choice will be of importance when deriving the formal asymptotic sharp interface limit for a mobility of the form $m(\varphi) = m_0\epsilon$ with a positive constant $m_0$, where source terms of the  form \eqref{3_SOURTE_TERM_1} with $\Gamma$ as in \eqref{3_SOURTE_TERM_3} do not fulfil a corresponding compatibility condition.
	\end{enumerate}

	\subsection{Boundary and initial conditions}
	We prescribe homogeneous Neumann boundary conditions for the phase field variable, the chemical potential and the stress tensor, i.\,e.,
	\begin{subequations}\label{3_CHB_BIC}
		\begin{alignat}{3}
		\label{3_CHB_BC_1}&\grad\varphi\cdot\n = \grad\mu\cdot\n = 0&&\qquad\text{a.\,e. on }\oldSigma,\\
		\label{3_CHB_BC_2}&\T(\v,p)\n = \mathbf{0}&&\qquad\text{a.\,e. on }\oldSigma.
		\end{alignat}
		For the nutrient we may prescribe Robin-type boundary conditions of the form
		\begin{equation}
		\label{3_CHB_BC_3}n(\varphi)\grad N_{,\sigma}\cdot\n = K(\sigma_{\infty}-\sigma)\qquad\text{a.\,e.  on }\oldSigma
		\end{equation}
		for a constant $K\geq 0$ referred to as the boundary permeability, and $\sigma_{\infty}$ denoting a given nutrient supply at the boundary. We may see $\sigma_{\infty}$ as a far-field nutrient level outside of $\Omega$, and recalling \eqref{3_DER_13c} we can rewrite \eqref{3_CHB_BC_3} as
		\begin{equation*}
		\mathbf{J}_{\sigma}\cdot\n = K(\sigma-\sigma_{\infty}).
		\end{equation*}
		Thus we see that there is nutrient outflow if $\sigma>\sigma_{\infty}$, i.\,e., the nutrient concentration on the boundary is higher than the far-field nutrient level, and inflow if $\sigma_{\infty}>\sigma$. The rate of inflow or outflow depends on the boundary permeability $K$.
		Finally, we impose the initial conditions
		\begin{equation}
		\label{3_CHB_IC_3}\varphi(0) = \varphi_0,\quad\sigma(0)= \sigma_0\quad\text{a.\,e.\ in }\Omega
		\end{equation}
	\end{subequations}
	with prescribed functions $\varphi_0$, $\sigma_0$.
	The Robin boundary condition \eqref{3_CHB_BC_3} can be interpreted as an interpolation between Neumann and Dirichlet boundary conditions. Indeed, the case $K=0$, that means no boundary permeability, corresponds to the Neumann type boundary condition
	\begin{equation*}
	n(\varphi)\grad N_{,\sigma}\cdot\n=0\quad\text{a.\,e. on }\oldSigma,
	\end{equation*}
	whereas formally sending $K\to\infty$ gives a Dirichlet boundary condition of the form
	\begin{equation*}
	\sigma=\sigma_{\infty}\quad\text{a.\,e. on }\oldSigma.
	\end{equation*}
	\subsection{Specific form of the nutrient energy}
	For the rest of this paper we consider a nutrient energy density of the form
	\begin{equation}
	\label{3_NUTRIENT_ENERGY}N(\varphi,\sigma)\coloneqq \frac{\chi_{\sigma}}{2}|\sigma|^2 + \chi_{\varphi}\sigma(1-\varphi)
	\end{equation}
	for positive constants $\chi_{\sigma}$ and $\chi_{\varphi}$ referred to as the nutrient diffusion and chemotaxis parameter, respectively. \newline The first term characterises energy effects due to the presence of the nutrient, i.\,e., a high concentration of nutrients leads to a high energy of the system. The second term accounts for chemotaxis effects, i.\,e., tumour cells move towards regions of high nutrient concentration. We refer to \cite{GarckeLamSitkaStyles,HawkinsZeeKristofferOdenTinsley} for more details regarding this form of the nutrient energy. Using \eqref{3_NUTRIENT_ENERGY} we compute
	\begin{equation*}
	N_{,\sigma} = \chi_{\sigma}\sigma + \chi_{\varphi}(1-\varphi),\qquad N_{,\varphi} = -\chi_{\varphi}\sigma.
	\end{equation*}
	Therefore, the fluxes $\mathbf{J}_{\varphi}$ and $\mathbf{J}_{\sigma}$ are given by 
	\begin{equation*}
	\mathbf{J}_{\varphi} = -m(\varphi)\grad\left(\tfrac{\beta}{\epsilon}\psi'(\varphi)-\beta\epsilon\laplace\varphi-\chi_{\varphi}\sigma\right),\qquad \mathbf{J}_{\sigma} = -n(\varphi)\grad\left(\chi_{\sigma}\sigma-\chi_{\varphi}\varphi\right).
	\end{equation*}
	There are two non-standard contributions in the definition of $\mathbf{J}_{\varphi}$ and $\mathbf{J}_{\sigma}$. The term $m(\varphi)\grad(\chi_{\varphi}\sigma)$ drives the tumour cells towards regions of high nutrient concentrations and is referred to as chemotaxis. \newline Moreover, we encounter a term of the form $n(\varphi)\grad(\chi_{\varphi}\varphi)$ driving the nutrients towards regions with higher tumour concentrations. This effect is called active transport and seems to be counter-intuitive at first glance. However, it can be observed for malign tumours in, e.\,g., the avascular growth phase. Indeed, to overcome nutrient limitations, some tumours express more glucose transporters to provide an increasing glucose transport through the cell membrane. We remark that this term is only active on the interface and we refer to \cite{GarckeLamSitkaStyles} for more details.\\ [1ex]
	In general we can decouple chemotaxis and active transport mechanisms by introducing the scaled mobility
	\begin{equation}
	\label{3_DECOUPLE_AT_CHEMO_1} \mathcal{D}(\varphi)\coloneqq \chi_\sigma n(\varphi),
	\end{equation}
	and setting $\chi = \frac{\chi_{\varphi}}{\chi_\sigma}$.
	Then, the fluxes can be rewritten as
	\begin{equation*} 
	\mathbf{J}_{\varphi} = -m(\varphi)\grad\left(\tfrac{\beta}{\epsilon}\psi'(\varphi)-\beta\epsilon\laplace\varphi-\chi_{\varphi}\sigma\right),\qquad \mathbf{J}_{\sigma} = -\mathcal{D}(\varphi)\grad\left(\sigma-\chi\varphi\right).
	\end{equation*}
	By formally sending $\chi\to 0$ we can switch off active transport while preserving the chemotaxis mechanism.
	
	\section{Formally matched asymptotics}
	In the following we formally derive the sharp interface limit of the system
	\begin{subequations}\label{3_basic_equation}
		\begin{align}
		\label{3_basic_equation_1a}\hspace{-6pt}\divergence(\v) &= \bar{\rho}_2^{-1}\Gamma_2(\varphi,\sigma,\mu) + \bar{\rho}_1^{-1}\Gamma_1(\varphi,\sigma,\mu),\\
		\label{3_basic_equation_1b}\hspace{-6pt}-\divergence(\T(\varphi,\v,p)) +\nu(\varphi)\v &= (\mu +\chi_{\varphi}\sigma)\grad\varphi,\\
		\label{3_basic_equation_1c}\hspace{-6pt}\delt\varphi + \divergence(\varphi\v) &= \divergence(m(\varphi)\grad\mu)+\bar{\rho}_2^{-1}\Gamma_2(\varphi,\sigma,\mu) - \bar{\rho}_1^{-1}\Gamma_1(\varphi,\sigma,\mu),\\
		\label{3_basic_equation_1d}\hspace{-6pt}\mu &= \tfrac{\beta}{\epsilon}\psi'(\varphi)-\beta\epsilon\Delta\varphi-\chi_{\varphi}\sigma,\\
		\label{3_basic_equation_1e}\hspace{-6pt}\delt\sigma + \divergence(\sigma\v)&= \divergence(n(\varphi)(\chi_{\sigma}\grad\sigma-\chi_{\varphi}\grad\varphi)) - \Gamma_{\sigma}(\varphi,\sigma,\mu),
		\end{align}
	\end{subequations}
	where 
	\begin{equation*}
	\T(\varphi,\v,p)\coloneqq 2\eta(\varphi)\D\v + \lambda(\varphi)\divergence(\v)\I-p\I.
	\end{equation*}
	The adhesion term $(\mu+\chi_{\varphi}\sigma)\grad\varphi$ in \eqref{3_basic_equation_1b} follows from a reformulation of the pressure. In fact, the term $-\divergence(\beta\epsilon\grad\varphi\otimes\grad\varphi)$ in \eqref{3_MEQ_2} is up to a gradient equal to $(\mu+\chi_{\varphi}\sigma)\grad\varphi$ and the gradient term can be absorbed into the pressure, see \cite{GarckeLamSitkaStyles} for details. We will focus on the double-well potential given by
	\begin{equation*}
	\psi(\varphi) = \frac{1}{4}(1-\varphi^2)^2,
	\end{equation*}
	and satisfying
	\begin{equation*}
	\psi'(\varphi) = \varphi^3-\varphi ,\quad \psi''(\varphi) = 3\varphi^2-1.
	\end{equation*}
	Moreover, we assume that $\eta(\cdot)$, $\lambda(\cdot)$, $\nu(\cdot)$ are smooth with $\eta(\cdot)$, $\nu(\cdot)$ positive and $\lambda(\cdot)$ non-negative. For the mobility $m(\cdot)$ we consider the following three cases:
	\begin{equation}
	\label{3_mobility}m(\varphi)=\begin{cases}
	m_0&\text{Case (i)},\\
	\epsilon m_0&\text{Case (ii)},\\
	\tfrac{m_1}{2}(1+\varphi)^2&\text{Case (iii)}.
	\end{cases}
	\end{equation}
	
	\subsection{Outer Expansion}
	\subsubsection{Assumptions}

	We make the following assumptions (compare \cite{GarckeLamSitkaStyles}).
	\begin{enumerate}
		\item[(i)] For any $\epsilon>0$ small enough, there exists a family $(\varphi_{\epsilon},\mu_{\epsilon},\sigma_{\epsilon}, \v_{\epsilon},p_{\epsilon})_{\epsilon>0}$ of solutions to \eqref{3_basic_equation_1a}-\eqref{3_basic_equation_1e} which are sufficiently smooth.
		\item[(ii)] We assume that 
		\begin{equation*}
		\Sigma(\epsilon)\coloneqq \{(x,t)\in  \Omega\times[0,T]\colon\, \varphi_{\epsilon}(x,t)=0\}
		\end{equation*}
		are evolving hypersurfaces (see, e.\,g., \cite[Def. 23]{BarrettGarckeNuernberg}) that do not intersect with $\del\Omega$ and we define
		\begin{equation*}
		\Sigma(\epsilon,t)\coloneqq \{x\in\Omega\colon\, \varphi_{\epsilon}(x,t)=0\}.
		\end{equation*}
		We assume that for every $\epsilon>0$ small enough, and for each time $t\in[0,T]$, the domain $\Omega$ can be divided into two open subdomains
		\begin{equation*}
		\Omega_+(\epsilon,t)\coloneqq \{x\in\Omega\colon\, \varphi_{\epsilon}(x,t)>0\}, \quad \Omega_-(\epsilon,t)\coloneqq \{x\in\Omega\colon\, \varphi_{\epsilon}(x,t)<0\}
		\end{equation*}
		separated by $\Sigma(\epsilon,t)$ such that $\Omega_{+}(\epsilon,t)$ is enclosed by $\Sigma(\epsilon,t)$. Thus, for all $\epsilon>0$ small enough and all $t\in[0,T]$ it holds that 
		\begin{equation*}
		\Omega = \Omega_+(\epsilon,t)\,\cup\, \Sigma(\epsilon,t)\,\cup\, \Omega_-(\epsilon,t),\quad \Sigma(\epsilon,t)=\del\Omega_+(\epsilon,t),\quad \Omega_+(\epsilon,t)=\Omega\,\backslash\, \overline{\Omega_-(\epsilon,t)}.
		\end{equation*}
		We show a sketch of the typical situation in Figure \ref{3_FIG_Typical_situation}.
		\begin{figure}[!h]
			\centering
			\includegraphics[angle=-0,width=0.55\textwidth]{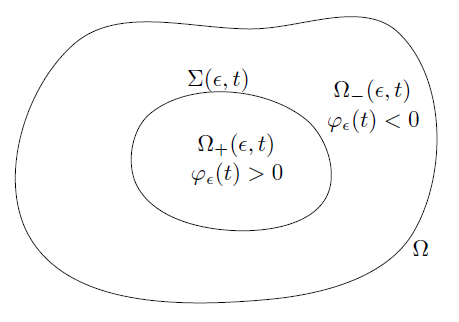}
			\caption{Typical situation for the formal asymptotic analysis.}
			\label{3_FIG_Typical_situation}
		\end{figure}
		\item[(iii)] We assume that $(\varphi_{\epsilon}, \v_{\epsilon},p_{\epsilon},\mu_{\epsilon},\sigma_{\epsilon})_{\epsilon>0}$ have an asymptotic expansion in $\epsilon$ in the bulk regions away from $\Sigma(\epsilon)$ (\textbf{outer expansion}), and another expansion in the interfacial region close to $\Sigma(\epsilon)$ (\textbf{inner expansion}).
		\item[(iv)] The zero level sets of $\varphi_{\epsilon}$ depend smoothly on $t$ and $\epsilon$  and converge as $\epsilon\to 0$ to a limiting evolving hypersurface $\Sigma(0)$ which evolves with normal velocity $\mathcal{V}$.
	\end{enumerate}
	From now on we will often drop the dependence on the time variable $t$. We use the notation $\eqref{3_basic_equation_1d}_O^{a}$ and $\eqref{3_basic_equation_1d}_I^{a}$ for the terms resulting from the order $a$ outer and inner expansions of $\eqref{3_basic_equation_1d}$, respectively.  
	\subsubsection{Expansion to leading order} We assume that $f_{\epsilon}\in\{\varphi_{\epsilon},\mu_{\epsilon},\sigma_{\epsilon}, \v_{\epsilon},p_{\epsilon}\}$ can be expanded by
	\begin{equation*}
	f_{\epsilon} = f_0 + \epsilon f_1 + \epsilon^2 f_2+ \dots\,.
	\end{equation*}
	Then, to leading order, $\eqref{3_basic_equation_1d}_O^{-1}$ yields
	\begin{equation}
	\label{3_outer_expansion_d_minus_1} -\beta\psi'(\varphi_0) = 0.
	\end{equation} 
	Stable solutions of \eqref{3_outer_expansion_d_minus_1} are the minima of $\psi(\cdot)$, and they are given by $\varphi_0 = \pm 1$. Consequently, we define
	\begin{equation*}
	\Omega_T \coloneqq \{x\in\Omega\colon \varphi_0(x)=1\},\quad \Omega_H \coloneqq \{x\in\Omega\colon \varphi_0(x)=-1\}.
	\end{equation*}
	The typical situation for $\Omega_T$ and $\Omega_H$ is shown in Figure \ref{3_FIG_TYP_SIT_TUM_HOST_REGIONS}. 
\begin{figure}[!h]
	\centering
	\includegraphics[angle=-0,width=0.55\textwidth]{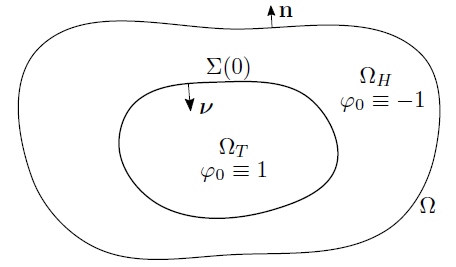}
	\caption{The tumour and healthy regions $\Omega_T$ and $\Omega_H$.}
	\label{3_FIG_TYP_SIT_TUM_HOST_REGIONS}
\end{figure}
	
	\noindent Since $\grad\varphi_0 =\mathbf{0}$, $\delt\varphi_0 =0$ in $\Omega_T$ and $\Omega_H$, we obtain for the equations to zeroth order that
	\begin{subequations}
		\begin{align}
		\label{3_outer_equations_1a}\divergence(\v_0) &= \tfrac{1}{\bar{\rho}_2}\Gamma_2(\varphi_0,\sigma_0,\mu_0) + \tfrac{1}{\bar{\rho}_1}\Gamma_1(\varphi_0,\sigma_0,\mu_0),\\
		\label{3_outer_equations_1b}-\divergence(\T(\varphi_0,\v_0,p_0))  +\nu(\varphi_0)\v_0&= 0,\\
		\label{3_outer_equations_1c}-\divergence (m(\varphi_0)\grad\mu_0) &=\tfrac{1}{\bar{\rho}_2}\Gamma_2(\varphi_0,\sigma_0,\mu_0)(1-\varphi_0) \nonumber \\ & \quad - \tfrac{1}{\bar{\rho}_1}\Gamma_1(\varphi_0,\sigma_0,\mu_0)(1+\varphi_0),\\
		\label{3_outer_equations_1d} \delt \sigma_0 + \divergence(\sigma_0\v_0)&= \divergence (n(\varphi_0)\chi_{\sigma}\grad \sigma_0) + \Gamma_{\sigma}(\varphi_0,\sigma_0,\mu_0),
		\end{align}
	\end{subequations}
	where 
	\begin{equation*}
	\T(\varphi_0,\v_0,p_0)= 2\eta(\varphi_0)\D\v_0 + \lambda(\varphi_0)\divergence(\v_0)\I-p_0\I.
	\end{equation*}
	We now analyse the three different cases for \eqref{3_basic_equation_1c} according to the mobilities introduced in \eqref{3_mobility}.\\ [1ex]
	\textit{Case (i)} ($m(\varphi)=m_0$): In this case we obtain
	\begin{subequations}
		\begin{equation}
		\label{3_outer_equations_1c_i}-m_0\laplace\mu_0 = \bar{\rho}_2^{-1}\Gamma_2(\varphi_0,\sigma_0,\mu_0)(1-\varphi_0)- \bar{\rho}_1^{-1}\Gamma_1(\varphi_0,\sigma_0,\mu_0)(1+\varphi_0).
		\end{equation}
		\textit{Case (ii)} ($m(\varphi)=\epsilon m_0$): The mobility is rescaled and the chemical potential does not contribute to the equations at zeroth order. Indeed, we have
		\begin{equation}
		\label{3_outer_equations_1c_ii}\bar{\rho}_2^{-1}\Gamma_2(\varphi_0,\sigma_0,\mu_0)(1-\varphi_0) = \bar{\rho}_1^{-1}\Gamma_1(\varphi_0,\sigma_0,\mu_0)(1+\varphi_0).
		\end{equation}
		\textit{Case (iii)} ($m(\varphi)=\tfrac{m_1}{2}(1+\varphi)^2$): The degenerate mobility case leads to
		\begin{equation}
		\label{3_outer_equations_1c_iii} -\divergence(\tfrac{m_1}{2}(1+\varphi_0)^2\,\grad\mu_0) = \bar{\rho}_2^{-1}\Gamma_2(\varphi_0,\sigma_0,\mu_0)(1-\varphi_0)- \bar{\rho}_1^{-1}\Gamma_1(\varphi_0,\sigma_0,\mu_0)(1+\varphi_0).
		\end{equation}
	\end{subequations}
	\begin{remark}\begin{enumerate}
			\item[(i)] In order to fulfil \eqref{3_outer_equations_1c_ii} we have to assume that
			\begin{equation}
			\label{3_conditions_on_Gamma}\Gamma_1(1,\sigma_0,\mu_0) = 0\quad\text{and}\quad \Gamma_2(-1,\sigma_0,\mu_0) = 0.
			\end{equation}
			Furthermore, we observe that for general source terms the chemical potential $\mu_0$ appears on the right hand side of \eqref{3_outer_equations_1a} although the bulk equations for $\mu_0$ remain undetermined. Therefore, it is reasonable to assume that the source terms are either independent of $\mu$, i.\,e., 
			\begin{equation}
			\label{3_conditions_on_Gamma_a}\Gamma_1 = \Gamma_1(\varphi,\sigma),\quad \Gamma_2 = \Gamma_2(\varphi,\sigma),
			\end{equation}
			or we may ask for
			\begin{equation}
			\label{3_conditions_on_Gamma_b}\Gamma_1(\pm 1,\sigma,\mu) = 0,\quad \Gamma_2(\pm 1,\sigma,\mu) = 0.
			\end{equation}
			To fulfil \eqref{3_conditions_on_Gamma} and \eqref{3_conditions_on_Gamma_a} we could choose
			\begin{equation*}
			\Gamma_1\equiv 0,\quad \Gamma_2(\varphi,\sigma)\coloneqq \frac{\bar{\rho}_2}{2}\left(\frac{1}{\bar{\rho}_2}-\frac{1}{\bar{\rho}_1}\right)(\mathcal{P}\sigma-\mathcal{A})(1+\varphi),
			\end{equation*}
			where $\mathcal{P}$ and $\mathcal{A}$ are non-negative constants related to proliferation and apoptosis, respectively. In this case the source terms in \eqref{3_basic_equation_1a}, \eqref{3_basic_equation_1c} coincide and are of the form
			\begin{equation*}
			\Gamma_{\varphi}(\varphi,\sigma) = \Gamma_{\v}(\varphi,\sigma) = \frac{\alpha}{2}(\mathcal{P}\sigma-\mathcal{A})(1+\varphi),
			\end{equation*}
			where 
			\begin{equation*}
			\alpha\coloneqq \frac{1}{\bar{\rho}_2}-\frac{1}{\bar{\rho}_1}.
			\end{equation*}
			Equation \eqref{3_conditions_on_Gamma} can be interpreted as follows:
			\begin{enumerate}
				\item[$\bullet$] in the pure tumour phases, there can be no growth of healthy cells,
				\item[$\bullet$] in regions of unmixed healthy tissue, there is no spontaneous growth of tumour cells.
			\end{enumerate}
			In a situation where we assume no gain or loss of mass locally, i.\,e., $\Gamma_2 = -\Gamma_1$, condition \eqref{3_conditions_on_Gamma} implies that
			\begin{equation*}
			\Gamma_1(\pm 1,\sigma_0,\mu_0) = \Gamma_2(\pm 1,\sigma_0,\mu_0)=0,
			\end{equation*} 
			which coincides with \eqref{3_conditions_on_Gamma_b}.
			Hence death and growth are restricted to the interfacial region and we may choose, for example,
			\begin{equation*}
			\Gamma_1(\varphi,\sigma,\mu) = \gamma_1(\varphi,\sigma,\mu)(1-\varphi^2)_{+}
			\end{equation*}
			for a function $\gamma_1$ to be specified. Alternatively we could use phenomenological laws to describe growth and death by choosing
			\begin{equation*}
			\Gamma_2 = -\Gamma_1 = P_1(\varphi)(\chi_{\sigma}\sigma + \chi_{\varphi}(1-\varphi)-\mu),
			\end{equation*} 
			where $P_1(\cdot)$ is a proliferation function satisfying $P_1(\pm 1)=0$. For instance, we could take $P_1(\varphi)=\frac{1}{4}(1-\varphi^2)^2$.
			\item [(ii)]
			In the healthy region \eqref{3_outer_equations_1c_iii} simplifies to
			\begin{equation*}
			0 = 2\bar{\rho}_2^{-1} \Gamma_2(-1,\sigma_0,\mu_0).
			\end{equation*}
			This is a compatibility for the source term $\Gamma_2$.
			For similar reasons as before, we can assume that either the source terms are independent of $\mu$ or
			\begin{equation*}
			\Gamma_1(-1,\sigma,\mu) = \Gamma_2(-1,\sigma,\mu)=0.
			\end{equation*}
			Reasonable choices are
			\begin{equation*}
			\Gamma_2(\varphi,\sigma) = \gamma_2(\varphi,\sigma)(1+\varphi)_{+}
			\end{equation*}
			for some function $\gamma_2$, or
			\begin{equation*}
			\frac{\Gamma_2}{\bar{\rho}_2} = -\frac{\Gamma_1}{\bar{\rho}_1} = P_2(\varphi)(\chi_{\sigma}\sigma + \chi_{\varphi}(1-\varphi)-\mu),
			\end{equation*}
			where $P_2(\varphi)=p_0(1+\varphi)_{+}$. This can be interpreted as a scaled zero excess of total mass and we have
			\begin{equation*}
			\Gamma_{\varphi} = 2P_2(\varphi)(\chi_{\sigma}\sigma + \chi_{\varphi}(1-\varphi)-\mu),\quad\Gamma_{\v}=0.
			\end{equation*}
			If the mobility was degenerate in both phases we would obtain the same condition as in \eqref{3_conditions_on_Gamma}.
			\item[(iii)] Similar conditions have to hold for the source term $\Gamma_{\sigma}$. From now on we assume that the source terms are independent of $\mu$.
		\end{enumerate}
	\end{remark}
	\subsection{Inner Expansion}
	\subsubsection{New Coordinates and matching conditions} This subsection uses ideas presented in \cite{AbelsGarckeGrun} and \cite{GarckeStinner}. We denote by $\Sigma(0)$ the smooth evolving interface which is assumed to be the limit of the zero level sets $\Sigma(\epsilon)$ of $\varphi_{\epsilon}$ as $\epsilon\to 0$ (see, e.\,g., \cite{GarckeStinner} for details). 
	We now introduce new coordinates in a neighbourhood of $\Sigma(0)$. To this end, we choose a time interval $I\subset\R$ and a spatial parameter domain $U\subset\R^{d-1}$, and we define a local parametrisation of $\Sigma(0)$ by
	\begin{equation*}
	\gamma\colon U\times I\to \R^d.
	\end{equation*}
	By $\boldsymbol{\nu}$ we denote the unit normal to $\Sigma(0)$ pointing into the tumour region. Close to $\gamma(U\times I)$ we consider the signed distance function $d(x,t)$ of a point $x$ to $\Sigma(0,t)$ with $d(x,t)>0$ if $x\in \Omega_T$ and $d(x,t)<0$ if $x\in \Omega_H$. We introduce a local parametrisation of $\R^d\times I$ near $\gamma(U\times I)$ using the rescaled distance $z = \frac{d}{\epsilon}$ by
	\begin{equation*}
	G^{\epsilon}(s,z,t)\coloneqq (\gamma(s,t)+\epsilon z\boldsymbol{\nu}(s,t),t)
	\end{equation*}
	with $s\in U\subset\R^{d-1}$. 
We show a sketch of the situation in Figure \ref{3_FIG_Schem_sketch_inner_reg}.
\begin{figure}[!h]
	\centering
	\includegraphics[angle=-0,width=0.55\textwidth]{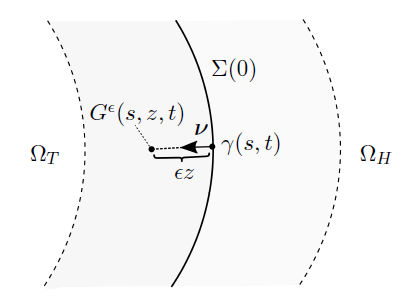}
	\caption{Schematic sketch of the inner region close to $\Sigma(0)$.}
	\label{3_FIG_Schem_sketch_inner_reg}
\end{figure}
	
	\noindent The (scalar) normal velocity is given by
	\begin{equation*}
	\mathcal{V} = \delt\gamma\cdot\boldsymbol{\nu},
	\end{equation*}
	and we observe that $(G^{\epsilon})^{-1}(x,t)\eqqcolon (s,z,t)(x,t)$ fulfils
	\begin{equation*}
	\delt z = \frac{1}{\epsilon}\delt d = -\frac{1}{\epsilon}\mathcal{V}.
	\end{equation*}
	In particular, it holds that $\boldsymbol{\nu}(x,t) = \grad d(x,t)$ on $\Sigma(0,t)$.\\ [1ex] Let $b(x,t)$ be a scalar function and define $B(s(x,t),z(x,t),t)=b(x,t)$. Then, in the new coordinate system, we obtain
	\begin{equation*}
	\ddt b(x,t) = \delt B + \del_z B\delt z+\grad_s B\cdot\delt s = -\frac{1}{\epsilon}\mathcal{V}\del_z B + \text{h.\,o.\,t.}\,.
	\end{equation*}
	For the gradient of $b$ we have
	\begin{equation*}
	\grad_x b = \grad_{\Sigma_{\epsilon z}} B + \frac{1}{\epsilon}\del_z B\boldsymbol{\nu},
	\end{equation*}
	where $\grad_{\Sigma_{\epsilon z}}$ is the surface gradient on $\Sigma_{\epsilon z}\coloneqq \{\gamma(s)+\epsilon z\boldsymbol{\nu}\colon s\in U\}$.\newline
	For a vector quantity $\mathbf{j}(x,t)=\mathbf{J}(s(x,t),z(x,t),t)$ we obtain
	\begin{equation*}
	\grad_x\cdot \mathbf{j} = \frac{1}{\epsilon}\del_z\mathbf{J}\cdot\boldsymbol{\nu}+\divergence_{\Sigma_{\epsilon z}}\mathbf{J}
	\end{equation*}
	with $\divergence_{\Sigma_{\epsilon z}}$ being the surface divergence on $\Sigma_{\epsilon z}$. Furthermore, it holds
	\begin{equation*}
	\Delta_x b(x,t) = \frac{1}{\epsilon^2}\del_{zz}B-\frac{1}{\epsilon}\kappa\del_z B + \text{h.\,o.\,t.}\,,
	\end{equation*}
	where $\kappa$ is the mean curvature of $\Sigma(0)$. In addition, we have
	\begin{align*}
	\grad_{\Sigma_{\epsilon z}}B(s,z) &= \grad_{\Sigma(0)}B(s,z) + \text{h.\,o.\,t.}\,,\\
	\divergence_{\Sigma_{\epsilon z}} \mathbf{J}(s,z) &= \divergence_{\Sigma(0)}\mathbf{J}(s,z) + \text{h.\,o.\,t.}\,,\\
	\laplace_{\Sigma_{\epsilon z}}B(s,z) &= \laplace_{\Sigma(0)}B(s,z) + \text{h.\,o.\,t.}\,.
	\end{align*}
	Summarising all the identities deduced so far yields
	\begin{subequations}\label{3_new_coord_ident}
		\begin{align}
		\label{3_new_coord_ident_1a}\frac{\d}{\d t}b(x,t) &= -\frac{1}{\epsilon}\mathcal{V}\del_z B + \text{h.\,o.\,t.}\,,\\
		\label{3_new_coord_ident_1b}\grad_x b(x,t) &=  \frac{1}{\epsilon}\del_zB\boldsymbol{\nu} +\grad_{\Sigma(0)}B + \text{h.\,o.\,t.}\,,\\
		\label{3_new_coord_ident_1c}\Delta_x b(x,t) &=  \frac{1}{\epsilon^2}\del_{zz}B- \frac{1}{\epsilon}\kappa\del_{z}B + \text{h.\,o.\,t.}\,,\\
		\label{3_new_coord_ident_1d}\divergence_x\mathbf{j}(x,t) &=  \frac{1}{\epsilon}\del_z\mathbf{J}\cdot\boldsymbol{\nu} +\divergence_{\Sigma(0)}\mathbf{J} +\text{h.\,o.\,t.}\,.
		\end{align}
		Using \eqref{3_new_coord_ident_1b}-\eqref{3_new_coord_ident_1c} component-wise we obtain
		\begin{align}
		\label{3_new_coord_ident_1e}\grad_x \mathbf{j} &=  \frac{1}{\epsilon}\del_z\mathbf{J}\otimes\boldsymbol{\nu} +\grad_{\Sigma(0)}\mathbf{J} + \text{h.\,o.\,t.}\,,\\
		\label{3_new_coord_ident_1f}\Delta_x\mathbf{j} &= \frac{1}{\epsilon^2}\del_{zz}\mathbf{J} - \frac{1}{\epsilon}\kappa \del_z\mathbf{J} + \text{h.\,o.\,t.}\,.
		\end{align}
	\end{subequations}
	We denote the variables $\varphi_{\epsilon}$, $\mu_{\epsilon}$, $\sigma_{\epsilon}$, $\v_{\epsilon}$, $p_{\epsilon}$, in the new coordinate system by $\Phi_{\epsilon}$, $\Xi_{\epsilon}$, $C_{\epsilon}$, $\mathbf{V}_{\epsilon}$, $ P_{\epsilon}$, and we assume the following inner expansion
	\begin{equation*}
	F_{\epsilon}(s,z) = F_0(s,z)+\epsilon F_1(s,z) +\epsilon^2 F_2(s,z)+ \dots
	\end{equation*}
	for $F_{\epsilon}\in \{\Phi_{\epsilon},\Xi_{\epsilon},C_{\epsilon},\mathbf{V}_{\epsilon},P_{\epsilon}\}$.
	The assumption that the zero level sets of $\varphi_{\epsilon}$ converge to $\Sigma(0)$ implies
	\begin{equation*}
	\Phi_0(s,z=0,t) =0.
	\end{equation*}
	We will employ the matching conditions (see \cite{GarckeLamSitkaStyles})
	\begin{subequations}
		\begin{align}
		\label{3_matching_cond_1a}\lim_{z\to\pm \infty}F_0(s,z,t) &= f_0^{\pm}(x,t),\\
		\label{3_matching_cond_1b}\lim_{z\to\pm \infty} \del_z F_0(s,z,t) &=0,\\
		\label{3_matching_cond_1c}\lim_{z\to\pm \infty}\del_z F_1(s,z,t) &= \grad f_0^{\pm}(x,t)\cdot\boldsymbol{\nu},
		\end{align}
	\end{subequations}
	where
	\begin{equation*}
	f_0^{\pm}(x,t)\coloneqq \lim_{\delta\searrow 0}f_0(x\pm \delta\boldsymbol{\nu},t)\quad\text{for }x\in\Sigma(0,t).
	\end{equation*}
	Moreover, we introduce the notation
	\begin{equation*}
	[f]_H^T\coloneqq \lim_{\delta\searrow 0}f(x+ \delta\boldsymbol{\nu},t)-\lim_{\delta\searrow 0}f(x- \delta\boldsymbol{\nu},t)\quad\text{for }x\in\Sigma(0,t)
	\end{equation*}
	to denote the jump of a quantity $f$ across the interface. 
	\subsubsection{Inner Expansion to leading order}\mbox{\hfill}\\ [1ex]
	\textbf{Step 1:} From $\eqref{3_basic_equation_1d}_I^{-1}$ we obtain
	\begin{equation}
	\label{3_IE_lead_ord_1d_eq1}\del_{zz}\Phi_0-\psi'(\Phi_0)=0.
	\end{equation}
	Since $\Phi_0(s,z=0,t)=0$ we can choose $\Phi_0$ independent of $s$ and $t$, hence, $\Phi_0$ solves
	\begin{equation}
	\label{3_IE_lead_ord_1d_eq2}\Phi_0''(z)-\psi'(\Phi_0(z)) = 0,\quad \Phi_0(0) = 0,\quad \Phi_0(\pm \infty) = \pm 1,
	\end{equation}
	where we used \eqref{3_matching_cond_1a}. The unique solution of \eqref{3_IE_lead_ord_1d_eq2} is given by
	\begin{equation*}
	\Phi_0(z) = \tanh\left(\frac{z}{\sqrt{2}}\right).
	\end{equation*}
	This solution has the property of equipartition of energy
	\begin{equation}
	\label{3_IE_lead_ord_1d_eq4}\frac{1}{2}|\Phi_0'(z)|^2 = \psi(\Phi_0(z))\quad \forall\,|z|<\infty.
	\end{equation}
	\textbf{Step 2:}
	From $\eqref{3_basic_equation_1a}_I^{-1}$ we obtain (using \eqref{3_new_coord_ident_1d})
	\begin{equation}
	\label{3_IE_lead_ord_1a_eq1}\del_z\mathbf{V}_0\cdot\boldsymbol{\nu} = 0.
	\end{equation}
	Due to $\del_z\boldsymbol{\nu} = \mathbf 0$ this implies
	\begin{equation}
	\label{3_IE_lead_ord_1a_eq1a}\del_z(\mathbf{V}_0\cdot\boldsymbol{\nu}) = 0.
	\end{equation}
	Integrating this identity gives
	\begin{equation*}
	0=\int_{-\infty}^{\infty}\del_z(\mathbf{V}_0\cdot\boldsymbol{\nu})\dz = [\mathbf{V}_0\cdot\boldsymbol{\nu}]_{-\infty}^{\infty}.
	\end{equation*}
	Hence, the matching condition \eqref{3_matching_cond_1a} yields
	\begin{equation}
	\label{3_IE_lead_ord_1a_eq2}[\v_0]_H^T\cdot\boldsymbol{\nu}\coloneqq \v_0^+\cdot\boldsymbol{\nu}-\v_0^-\cdot\boldsymbol{\nu} = 0.
	\end{equation}
	\textbf{Step 3:}
	We now analyse \eqref{3_basic_equation_1c}. The terms $\bar{\rho}_2^{-1}\Gamma_2$ and $\bar{\rho}_1^{-1}\Gamma_1$ do not contribute to leading order. We distinguish again the three cases for the mobilities:\\ [1ex]
	\textit{Case (i)} ($m(\varphi)=m_0$): Using \eqref{3_new_coord_ident},  from $\eqref{3_basic_equation_1c}_I^{-2}$ we get
	\begin{equation*}
	m_0\del_{zz}\Xi_0 = 0.
	\end{equation*}
	Upon integrating and using the matching condition \eqref{3_matching_cond_1b} we obtain
	\begin{equation*}
	\del_{z}\Xi_0 = 0\quad \forall\, |z|<\infty.
	\end{equation*}
	Integrating again from $-\infty$ to $\infty$ and using the matching condition \eqref{3_matching_cond_1a}, yields
	\begin{equation*}
	[\mu_0]_H^T = 0.
	\end{equation*}
	\textit{Case (ii)} ($m(\varphi)=\epsilon m_0$): Using \eqref{3_new_coord_ident}  we obtain from $\eqref{3_basic_equation_1c}_I^{-1}$ that
	\begin{equation}
	\label{3_IE_high_ord_1c_eq4}-\mathcal{V}\Phi_0' + \del_z(\Phi_0\mathbf{V}_0)\cdot\boldsymbol{\nu} = \del_z(m_0\del_z\Xi_0).
	\end{equation}
	Integrating this identity and using $\del_z\mathcal{V}=0$, $\del_z\boldsymbol{\nu}=\mathbf{0}$ in conjunction with \eqref{3_IE_lead_ord_1a_eq1a} and \eqref{3_matching_cond_1b} gives
	\begin{equation*}
	2(-\mathcal{V}+\v_0\cdot\boldsymbol{\nu}) = 0.
	\end{equation*}
	In particular, we obtain from \eqref{3_IE_lead_ord_1a_eq1a}-\eqref{3_IE_lead_ord_1a_eq2} and \eqref{3_IE_high_ord_1c_eq4} that
	\begin{equation*}
	m_0\del_{zz}\Xi_0 = (-\mathcal{V}+\v_0\cdot\boldsymbol{\nu})\Phi_0' = 0,
	\end{equation*}
	which together with the matching condition \eqref{3_matching_cond_1b} implies that $\del_z\Xi_0 = 0$ for all $|z|<\infty$.
	Hence, we obtain that $\Xi_0$ is independent of $z$.\\ [1ex]
	\textit{Case (iii)} $\big(m(\varphi)=\tfrac{m_1}{2}(1+\varphi)^2\big)$: With similar arguments as above we obtain from $\eqref{3_basic_equation_1c}_I^{-2}$ that
	\begin{equation*}
	\tfrac{m_1}{2}\del_{z}((1+\Phi_0)^2\,\del_{z}\Xi_0)=0.
	\end{equation*}
	Integrating this inequality in time from $-\infty$ to $z$ with $|z|<\infty$ and using the matching condition \eqref{3_matching_cond_1b} gives
	\begin{equation*}
	\tfrac{m_1}{2}(1+\Phi_0)^2\del_{z}\Xi_0(s,z,t) = 0\quad \forall\, |z|<\infty.
	\end{equation*}
	Since $|\Phi_0(z)|<1$ for $|z|<\infty$, this implies that 
	\begin{equation*}
	\del_{z}\Xi_0(s,z,t) = 0\quad \forall\,|z|<\infty,
	\end{equation*}
	and therefore $\Xi_0$ is independent of $z$. 
	\\[1ex]
	\textbf{Step 4:} Using $\del_{z}\boldsymbol{\nu}=\mathbf{0}$ and applying similar calculations as for \eqref{3_basic_equation_1c}, from $\eqref{3_basic_equation_1e}_I^{-2}$  we obtain
	\begin{equation*}
	\del_z(n(\Phi_0)\chi_{\sigma}\del_z C_0)-\del_z(n(\Phi_0)\chi_{\varphi}\del_z \Phi_0) = 0.
	\end{equation*}
	Integrating this identity from $-\infty$ to $z$ with $|z|<\infty$ and using \eqref{3_matching_cond_1b} yields
	\begin{equation*}
	n(\Phi_0)(\chi_{\sigma}\del_z C_0-\chi_{\varphi}\Phi_0'(z))=0\quad\forall\,|z|<\infty.
	\end{equation*}
	Since $n(\Phi_0)>0$, this means
	\begin{equation}
	\label{3_IE_lead_ord_1e_eq8}\chi_{\sigma}\del_z C_0(s,z,t)=\chi_{\varphi}\Phi_0'(z)\quad\forall\,|z|<\infty.
	\end{equation}
	Upon integrating and using \eqref{3_matching_cond_1a} we see that
	\begin{equation*}
	[\sigma_0]_H^T=[C_0(s,z,t)]_{-\infty}^{+\infty} = \int_{-\infty}^{\infty}\del_z C_0(s,z,t)\dz = \frac{\chi_{\varphi}}{\chi_{\sigma}} \int_{-\infty}^{\infty}\Phi_0'(z)\dz = 2\frac{\chi_{\varphi}}{\chi_{\sigma}}.
	\end{equation*}
	\textbf{Step 5:} Finally, we analyse \eqref{3_basic_equation_1b} and we define $\mathcal{E}(\mathbf{A}) = \frac{1}{2}(\mathbf{A}+\mathbf{A}^{\intercal})$ for a square matrix $\mathbf{A}$. Using \eqref{3_new_coord_ident_1b}, \eqref{3_new_coord_ident_1e} and \eqref{3_IE_lead_ord_1a_eq1}, with similar arguments as in \cite{AbelsGarckeGrun} we obtain from $\eqref{3_basic_equation_1b}_I^{-2}$ that
	\begin{equation}
	\label{3_IE_lead_ord_1b_eq1_3}\del_z (2\eta(\Phi_0)\mathcal{E}(\del_z\mathbf{V}_0\otimes\boldsymbol{\nu})\boldsymbol{\nu}) = \mathbf 0.
	\end{equation}
	Due to \eqref{3_IE_lead_ord_1a_eq1a} we have
	\begin{equation*}
	(\boldsymbol{\nu}\otimes \del_z\mathbf{V}_0)\boldsymbol{\nu} = (\del_z\mathbf{V}_0\cdot\boldsymbol{\nu})\boldsymbol{\nu} = \mathbf 0.
	\end{equation*}
	Together with \eqref{3_IE_lead_ord_1b_eq1_3} and the identity $(\del_z\mathbf{V}_0\otimes \boldsymbol{\nu})\boldsymbol{\nu} = \del_z\mathbf{V}_0$, this implies
	\begin{equation*}
	\del_{z}(\eta(\Phi_0)\del_z\mathbf{V}_0) = \mathbf 0.
	\end{equation*}
	Integrating from $-\infty$ to $z$ with $|z|<\infty$, using the matching condition \eqref{3_matching_cond_1b} and the positivity of $\eta(\cdot)$, this gives
	\begin{equation}
	\label{3_IE_lead_ord_1b_eq1_5}\del_z\mathbf{V}_0 = \mathbf 0\quad \forall \,|z|<\infty.
	\end{equation}
	Once more integrating and using the matching condition \eqref{3_matching_cond_1a} yields
	\begin{equation}
	\label{3_IE_lead_ord_1b_eq1_6}[\v_0]_H^T = \mathbf 0.
	\end{equation}
	\subsubsection{Inner Expansion to higher order}
	We will now expand the equations in the inner regions to the next highest order.\\[1ex]
	\textbf{Step 1:} From $\eqref{3_basic_equation_1d}_I^0$, we obtain
	\begin{equation*}
	\beta\Phi_1\psi''(\Phi_0) + \beta\kappa\Phi_0' - \beta\del_{zz}\Phi_1 - \chi_{\varphi}C_0 = \Xi_0.
	\end{equation*}
	Multiplying by $\Phi_0'$ and integrating from $-\infty$ to $+\infty$ yields
	\begin{equation}
	\label{3_IE_high_ord_1d_eq2}\int_{-\infty}^{\infty}\Xi_0(s,t)\Phi_0'(z)\dz = \int_{-\infty}^{\infty}\beta(\psi'(\Phi_0))'\Phi_1 - \beta\del_{zz}\Phi_1\Phi_0' + \beta\kappa|\Phi_0'|^2-\chi_{\varphi}C_0\Phi_0'\dz.
	\end{equation}
	Using \eqref{3_matching_cond_1a}-\eqref{3_matching_cond_1b}, \eqref{3_IE_lead_ord_1d_eq1} and $\psi'(\pm 1)=0$, integration by parts gives
	\begin{align}
	\nonumber\int_{-\infty}^{\infty}(\psi'(\Phi_0))'\Phi_1-\del_{zz}\Phi_1\Phi_0'\d z &= [\psi'(\Phi_0)\Phi_1-\del_z\Phi_1\Phi_0']_{-\infty}^{+\infty} \\
	\label{3_IE_high_ord_1d_eq3}&\quad- \int_{-\infty}^{\infty}\del_z\Phi_1(\psi'(\Phi_0)-\Phi_0'')\dz=0.
	\end{align}
	Recalling that $\Xi_0$ is independent of $z$ and applying the matching condition \eqref{3_matching_cond_1a} we have
	\begin{equation}
	\label{3_IE_high_ord_1d_eq4}\int_{-\infty}^{+\infty}\Xi_0(s,t)\Phi_0'(z)\dz = 2\mu_0.
	\end{equation}
	By the equipartition of energy \eqref{3_IE_lead_ord_1d_eq4} we compute
	\begin{align*}
	\int_{-\infty}^{\infty}|\Phi_0'(z)|^2\dz &= \int_{-\infty}^{\infty}|\Phi_0'(z)|\sqrt{2\psi(\Phi_0(z))}\dz =\int_{-1}^{1}\sqrt{2\psi(y)}\dy \\
	&= \frac{1}{\sqrt{2}}\int_{-1}^{1}(1-y^2)\dy = \frac{2\sqrt{2}}{2}\eqqcolon \tau,
	\end{align*}
	and obtain
	\begin{equation}
	\label{3_IE_high_ord_1d_eq5}\int_{-\infty}^{+\infty}\beta\kappa|\Phi_0'(z)|^2 \dz =  \beta\kappa\tau.
	\end{equation}
	Finally, by \eqref{3_IE_lead_ord_1e_eq8} we obtain
	\begin{align}
	\nonumber\int_{-\infty}^{+\infty}\chi_{\varphi}C_0\Phi_0'(z)\dz = \chi_{\sigma}\int_{-\infty}^{+\infty}\del_z C_0(s,z,t)C_0(s,z,t)\dz &= \frac{\chi_{\sigma}}{2}\int_{-\infty}^{+\infty}\del_z(|C_0|^2)\dz\\
	\label{3_IE_high_ord_1d_eq6} & = \frac{\chi_{\sigma}}{2}[|\sigma_0|^2]_H^T.
	\end{align}
	Collecting \eqref{3_IE_high_ord_1d_eq2}-\eqref{3_IE_high_ord_1d_eq6} gives
	\begin{equation}
	\label{3_IE_high_ord_1d_eq7} 2\mu_0 = \beta\kappa\tau - \frac{\chi_{\sigma}}{2}[|\sigma_0|^2]_H^T.
	\end{equation}
	This is a solvability condition for $\Phi_1$, the so-called \textbf{Gibbs--Thomas equation}.\\ [1ex] 
	\textbf{Step 2:}
	With similar arguments as above and using \eqref{3_IE_lead_ord_1e_eq8}, equation $\eqref{3_basic_equation_1e}_I^{-1}$ gives
	\begin{equation*}
	(-\mathcal{V}+\mathbf{V}_0\cdot\boldsymbol{\nu})\del_{z}C_0 = \del_z(n(\Phi_0)(\chi_{\sigma}\del_z C_1-\chi_{\varphi}\del_z\Phi_1)).
	\end{equation*}
	Employing the matching condition \eqref{3_matching_cond_1c} and $\grad\varphi_0=\mathbf{0}$ in the bulk regions together with $\del_z\mathcal{V}=0$ and \eqref{3_IE_lead_ord_1a_eq1a}, this yields
	\begin{align*}
	(-\mathcal{V}+\v_0\cdot\boldsymbol{\nu})[\sigma_0]_H^T &= \int_{-\infty}^{\infty}(-\mathcal{V}+\mathbf{V}_0\cdot\boldsymbol{\nu})\del_{z}C_0\dz\\
	& = \int_{-\infty}^{+\infty}\del_z(n(\Phi_0)(\chi_{\sigma}\del_z C_1-\chi_{\varphi}\del_z\Phi_1))\dz 
	= \chi_{\sigma}[n(\varphi_0)\grad\sigma_0]_H^T\cdot\boldsymbol{\nu}.
	\end{align*}
	\textbf{Step 3:} Similar as in \cite{AbelsGarckeGrun} we analyse \eqref{3_basic_equation_1c} only for the mobilities \eqref{3_mobility}\textnormal{(i)} and \textnormal{(iii)} since the case \eqref{3_mobility}\textnormal{(ii)} is rescaled and therefore does not contribute to the sharp interface limit.\\ [1ex]
	\textit{Case (i)} ($m(\varphi)=m_0$): Using $\del_{z}\Xi_0 = 0$ and \eqref{3_IE_lead_ord_1a_eq1}, from $\eqref{3_basic_equation_1c}_I^{-1}$ we obtain
	\begin{equation*}
	(-\mathcal{V}+\mathbf{V}_0\cdot\boldsymbol{\nu})\Phi_0' = m_0\del_{zz}\Xi_1.
	\end{equation*}
	Integrating with respect to $z$ from $-\infty$ to $\infty$, using \eqref{3_IE_lead_ord_1a_eq1a}-\eqref{3_IE_lead_ord_1a_eq2} and the matching condition \eqref{3_matching_cond_1c}, this yields
	\begin{equation}\label{3_IE_high_ord_1e_eq5}
	2(-\mathcal{V}+\v_0\cdot\boldsymbol{\nu}) = m_0[\grad\mu_0]_H^T\cdot\boldsymbol{\nu}.
	\end{equation}
	\textit{Case (iii)} $\big(m(\varphi)=m_1(1+\varphi)^2\big)$: With similar arguments as above we obtain
	\begin{equation*}
	(-\mathcal{V}+\mathbf{V}_0\cdot\boldsymbol{\nu})\Phi_0' = \tfrac{m_1}{2}\del_{z}\left( (1+\Phi_0)^2\del_{z}\Xi_1\right).
	\end{equation*}
	Using the matching conditions \eqref{3_matching_cond_1a}, \eqref{3_matching_cond_1c} and the same arguments as for \eqref{3_IE_high_ord_1e_eq5}, this entails
	\begin{equation*}
	(-\mathcal{V}+\v_0\cdot\boldsymbol{\nu}) = m_1\grad\mu_0^T\cdot\boldsymbol{\nu}.
	\end{equation*}
	\textbf{Step 4:} Finally, we consider the momentum balance equation \eqref{3_basic_equation_1b} at order $\epsilon^{-1}$. Using \eqref{3_new_coord_ident} and \eqref{3_IE_lead_ord_1b_eq1_5}, with similar arguments as above we obtain from $\eqref{3_basic_equation_1b}_I^{-1}$
	\begin{align}
	\nonumber &-\del_z\big(2\eta(\Phi_0)\mathcal{E}(\del_z\mathbf{V}_1\otimes \boldsymbol{\nu})\boldsymbol{\nu}+2\eta(\Phi_0)\mathcal{E}(\grad_{\Sigma(0)}\mathbf{V}_0)\boldsymbol{\nu}\big)\\
	\nonumber&\qquad -\del_z\big(\lambda(\Phi_0)(\del_{z}\mathbf{V}_1\cdot\boldsymbol{\nu} + \divergence_{\Sigma(0)}\mathbf{V}_0)\boldsymbol{\nu} -  P_0\boldsymbol{\nu}\big) \\
	\label{3_IE_high_ord_1b_eq4}&\quad= (\Xi_0 + \chi_{\varphi}C_0)\Phi_0'\boldsymbol{\nu}.
	\end{align}
	Since matching requires $\lim_{z\to\pm \infty}\del_z\mathbf{V}_1(z) = (\grad\v_0^{\pm})\boldsymbol{\nu}$, we conclude
	\begin{alignat*}{3}
	(\del_z\mathbf{V}_1\otimes\boldsymbol{\nu} + \grad_{\Sigma(0)}\mathbf{V}_0) &\to \grad_x\v_0&&\quad\text{for}\quad z\to\pm \infty,\\
	(\del_{z}\mathbf{V}_1\cdot\boldsymbol{\nu} + \divergence_{\Sigma(0)}\mathbf{V}_0) &\to \divergence_x\v_0&&\quad\text{for}\quad z\to\pm \infty.
	\end{alignat*}
	Integrating \eqref{3_IE_high_ord_1b_eq4} with respect to $z$ from $-\infty$ to $+\infty$ and using \eqref{3_matching_cond_1a}, this implies
	\begin{align*}
	& - [2\eta(\varphi_0)\mathcal{E}(\grad_x\v_0) +\lambda(\varphi_0)\divergence(\v_0)\I -p_0\I]_H^T\boldsymbol{\nu} \nonumber \\ & \qquad = \int_{-\infty}^{+\infty}(\Xi_0(s,t) + \chi_{\varphi}C_0(s,z,t))\Phi_0'(z)\boldsymbol{\nu}\dz.
	\end{align*}
	Together with \eqref{3_IE_high_ord_1d_eq4} and \eqref{3_IE_high_ord_1d_eq6}-\eqref{3_IE_high_ord_1d_eq7}, we end up at
	\begin{equation*}
	[\T(\varphi_0,\v_0,p_0)]_H^T\boldsymbol{\nu} = -\beta\kappa\tau\boldsymbol{\nu}.
	\end{equation*}
	\subsection{Equations of the formal sharp interface limit}
	For the reader's convenience, we summarise the sharp interface models for the different mobilities: \\ [1ex]
	\textit{Case (i)} ($m(\varphi)=m_0$) The equations in the bulk are given by
	\begin{alignat*}{3}
	-\divergence(\T(\varphi_0,\v_0,p_0)) +\nu(\varphi_0)\v_0&= 0&&\qquad\text{in }\Omega_T\cup\Omega_H, \\
	\divergence (\v_0^T) &= \bar{\rho}_2^{-1}\Gamma_2(1,\sigma_0^T) + \bar{\rho}_1^{-1}\Gamma_1(1,\sigma_0^T)&&\qquad\text{in }\Omega_T,\\
	\divergence (\v_0^H) &= \bar{\rho}_2^{-1}\Gamma_2(-1,\sigma_0^H) + \bar{\rho}_1^{-1}\Gamma_1(-1,\sigma_0^H)&&\qquad\text{in }\Omega_H,\\
	-m_0\laplace \mu_0^T &= -2\bar{\rho}_1^{-1}\Gamma_1(1,\sigma_0^T) &&\qquad\text{in }\Omega_T,\\
	-m_0\laplace \mu_0^H &= 2\bar{\rho}_2^{-1}\Gamma_2(-1,\sigma_0^H)&&\qquad\text{in }\Omega_H,\\
	\delt \sigma_0^T + \divergence(\sigma_0^T\v_0^T) &= \divergence (n(1)\chi_{\sigma}\grad \sigma_0^T) - \Gamma_{\sigma}(1,\sigma_0^T)&&\qquad\text{in }\Omega_T,\\
	\delt \sigma_0^H + \divergence(\sigma_0^H\v_0^H) &= \divergence (n(-1)\chi_{\sigma}\grad \sigma_0^H) - \Gamma_{\sigma}(-1,\sigma_0^H)&&\qquad\text{in }\Omega_H.
	\end{alignat*}
	Furthermore, on $\Sigma(0)$ we have the free boundary conditions 
	\begin{align*}
	&[\v_0]_H^T = \mathbf{0},\qquad [\mu_0]_H^T = 0,\qquad [\sigma_0]_H^T = 2\tfrac{\chi_{\varphi}}{\chi_{\sigma}},\\
	&2\mu_0 = \beta\kappa\tau -\tfrac{\chi_{\sigma}}{2}[|\sigma_0|^2]_H^T,\qquad (-\mathcal{V}+\v_0\cdot\boldsymbol{\nu})[\sigma_0]_H^T= [n(\varphi_0)\grad\sigma_0]_H^T\cdot\boldsymbol{\nu},\\
	&2(-\mathcal{V}+\v_0\cdot\boldsymbol{\nu}) = m_0[\grad \mu_0]_H^T\cdot\boldsymbol{\nu},\qquad [\T(\varphi_0,\v_0,p_0)]_H^T\boldsymbol{\nu}  = -\beta\kappa\tau\boldsymbol{\nu}.
	\end{align*}
	\textit{Case (ii)} ($m(\varphi)=\epsilon m_0$) The equations in the bulk are given by
	\begin{alignat*}{3}
	-\divergence(\T(\varphi_0,\v_0,p_0)) +\nu(\varphi_0)\v_0&= 0&&\qquad\text{in }\Omega_T\cup\Omega_H, \\
	\divergence (\v_0^T) &= \bar{\rho}_2^{-1}\Gamma_2(1,\sigma_0^T) &&\qquad\text{in }\Omega_T,\\
	\divergence (\v_0^H) &=   \bar{\rho}_1^{-1}\Gamma_1(-1,\sigma_0^H)&&\qquad\text{in }\Omega_H,\\
	\delt \sigma_0^T + \divergence(\sigma_0^T\v_0^T) &= \divergence (n(1)\chi_{\sigma}\grad \sigma_0^T) - \Gamma_{\sigma}(1,\sigma_0^T)&&\qquad\text{in }\Omega_T,\\
	\delt \sigma_0^H + \divergence(\sigma_0^H\v_0^H) &= \divergence (n(-1)\chi_{\sigma}\grad \sigma_0^H) - \Gamma_{\sigma}(-1,\sigma_0^H)&&\qquad\text{in }\Omega_H.
	\end{alignat*}
	Furthermore, on $\Sigma(0)$ we have the free boundary conditions
	\begin{align*}
	&[\v_0]_H^T = \mathbf{0},\qquad [\sigma_0]_H^T = 2\tfrac{\chi_{\varphi}}{\chi_{\sigma}},\qquad 0 =  [n(\varphi_0)\grad\sigma_0]_H^T\cdot\boldsymbol{\nu},\\
	&\mathcal{V}=\v_0\cdot\boldsymbol{\nu} ,\qquad [\T(\varphi_0,\v_0,p_0)]_H^T\boldsymbol{\nu}  = -\beta\kappa\tau\boldsymbol{\nu}.
	\end{align*}
	\textit{Case (iii)} $\big(m(\varphi)=m_1(1+\varphi)^2\big)$ The equations in the bulk are given by
	\begin{alignat*}{3}
	-\divergence(\T(\varphi_0,\v_0,p_0)) +\nu(\varphi_0)\v_0&= 0&&\qquad\text{in }\Omega_T\cup\Omega_H, \\
	\divergence (\v_0^T) &= \bar{\rho}_2^{-1}\Gamma_2(1,\sigma_0^T) + \bar{\rho}_1^{-1}\Gamma_1(1,\sigma_0^T)&&\qquad\text{in }\Omega_T,\\
	\divergence (\v_0^H) &=  \bar{\rho}_1^{-1}\Gamma_1(-1,\sigma_0^H)&&\qquad\text{in }\Omega_H,\\
	-m_1\laplace \mu_0^T &=-\bar{\rho}_1^{-1}\Gamma_1(1,\sigma_0^T) &&\qquad\text{in }\Omega_T,\\
	\delt \sigma_0^T + \divergence(\sigma_0^T\v_0^T) &= \divergence (n(1)\chi_{\sigma}\grad \sigma_0^T) - \Gamma_{\sigma}(1,\sigma_0^T)&&\qquad\text{in }\Omega_T,\\
	\delt \sigma_0^H + \divergence(\sigma_0^H\v_0^H) &= \divergence (n(-1)\chi_{\sigma}\grad \sigma_0^H) - \Gamma_{\sigma}(-1,\sigma_0^H)&&\qquad\text{in }\Omega_H.
	\end{alignat*}
	Furthermore, on $\Sigma(0)$ we have the free boundary conditions
	\begin{align*}
	&[\v_0]_H^T = \mathbf{0},\qquad [\sigma_0]_H^T = 2\tfrac{\chi_{\varphi}}{\chi_{\sigma}}, \qquad 2\mu_0 = \beta\kappa\tau -\tfrac{\chi_{\sigma}}{2}[|\sigma_0|^2]_H^T,\\
	& (-\mathcal{V}+\v_0\cdot\boldsymbol{\nu})[\sigma_0]_H^T= [n(\varphi_0)\grad\sigma_0]_H^T\cdot\boldsymbol{\nu},\qquad (-\mathcal{V}+\v_0\cdot\boldsymbol{\nu}) = m_1\grad \mu_0^T\cdot\boldsymbol{\nu},\\
	& [\T(\varphi_0,\v_0,p_0)]_H^T\boldsymbol{\nu}  = -\beta\kappa\tau\boldsymbol{\nu}.
	\end{align*}
	\subsection{Specific sharp interface models}
	\subsubsection{The limit of vanishing active transport, Darcy's law and Stokes' flow}
	We consider \eqref{3_basic_equation_1a}-\eqref{3_basic_equation_1e} with quasi-static nutrients and the mobility \eqref{3_mobility}\textnormal{(ii)} along with constant viscosities and permeability. Moreover, we decouple chemotaxis and active transport according to \eqref{3_DECOUPLE_AT_CHEMO_1}, and we set
	\begin{equation*}
	\mathcal{D}(\varphi) = \frac{1+\varphi}{2} +  \mathcal{D}\frac{1-\varphi}{2}
	\end{equation*}
	for a constant $\mathcal{D}>0$. Moreover, we choose
	\begin{equation*}
	\Gamma_1\equiv 0,\qquad \Gamma_2(\varphi,\sigma)=\frac{\bar{\rho}_2}{2}\left(\frac{1}{\bar{\rho}_2}-\frac{1}{\bar{\rho}_1}\right)(\mathcal{P}\sigma-\mathcal{A})(1+\varphi),\qquad \Gamma_{\sigma}(\varphi,\sigma)=\frac{\mathcal{C}}{2}\sigma(1+\varphi).
	\end{equation*}
	This gives the following system of equations
	\begin{align*}
	\divergence(\v) &= \frac{\alpha}{2}(\mathcal{P}\sigma-\mathcal{A})(1+\varphi),\\
	-\divergence(\T(\v,p)) +\nu\v &= (\mu +\chi_{\varphi}\sigma)\grad\varphi,\\
	\delt\varphi + \grad\varphi\cdot\v &= \divergence(\epsilon m_0 \grad\mu)+\frac{\alpha}{2}(\mathcal{P}\sigma-\mathcal{A})(1-\varphi^2),\\
	\mu &= \tfrac{\beta}{\epsilon}\psi'(\varphi)-\beta\epsilon\Delta\varphi-\chi_{\varphi}\sigma,\\
	0&= \divergence(\mathcal{D}(\varphi)\grad\sigma)-\chi\divergence(\mathcal{D}(\varphi)\grad\varphi)-\mathcal{C}\sigma(1+\varphi),
	\end{align*}
	where $\T(\v,p)=2\eta\D\v + \lambda\divergence(\v)\I-p\I$. With slightly different arguments as above (see also \cite{GarckeLamSitkaStyles}) and sending $\chi\to 0$, we obtain 
	\begin{subequations}\label{3_SHARPIF_NO_AT_1}
		\begin{align}
		\label{3_SHARPIF_NO_AT_1a}-\divergence(\T(\v_0,p_0)) +\nu\v_0&= 0\qquad\text{in }\Omega_T\cup\Omega_H, \\
		\label{3_SHARPIF_NO_AT_1b}\divergence (\v_0) &=\begin{cases}
		\alpha(\mathcal{P}\sigma_0^T-\mathcal{A} )&\text{in }\Omega_T,\\
		0&\text{in }\Omega_H,
		\end{cases}\\
		\label{3_SHARPIF_NO_AT_1c}\laplace\sigma_0 &= \begin{cases}
		\mathcal{C}\sigma_0&\text{in }\Omega_T,\\
		0&\text{in }\Omega_H,
		\end{cases}
		\end{align}
		and the free boundary conditions on $\Sigma(0)$ are given by
		\begin{equation}\label{3_SHARPIF_NO_AT_1d}
		\begin{aligned}
		&[\v_0]_H^T = \mathbf{0},\qquad [\sigma_0]_H^T = 0,\qquad \grad\sigma_0^T\cdot\boldsymbol{\nu} =  \mathcal{D}\grad\sigma_0^H\cdot\boldsymbol{\nu},\\
		&\mathcal{V}=\v_0\cdot\boldsymbol{\nu},\qquad [\T(\v_0,p_0)]_H^T\boldsymbol{\nu}  = -\beta\kappa\tau\boldsymbol{\nu}.
		\end{aligned}
		\end{equation}
	\end{subequations}
	This model is a special case of the two-phase free boundary problem in \cite{ZhengWiseCristini}, where numerical simulations for \eqref{3_SHARPIF_NO_AT_1} are presented. Similar models have been studied in \cite{CristiniEtAl2}. For a one-phase model with Brinkman's law for the velocity we refer to \cite{PhamFrieboesCristiniLowengrub}.\newline
	Sending the viscosities to $0$ in \eqref{3_SHARPIF_NO_AT_1}, we can express the velocity in terms of the pressure and we obtain the following Darcy-type model 
	\begin{align*}
	-\laplace p_0 &=\begin{cases}
	\nu\,\alpha(\mathcal{P}\sigma_0^T-\mathcal{A} )&\text{in }\Omega_T,\\
	0&\text{in }\Omega_H,
	\end{cases}\\
	\laplace\sigma_0 &= \begin{cases}
	\mathcal{C}\sigma_0&\text{in }\Omega_T,\\
	0&\text{in }\Omega_H,
	\end{cases}
	\end{align*}
	where the free boundary conditions on $\Sigma(0)$ are given by
	\begin{equation*}
	[\sigma_0]_H^T = 0,\ \grad\sigma_0^T\cdot\boldsymbol{\nu}=\mathcal{D}\grad\sigma_0^H\cdot\boldsymbol{\nu},\ \tfrac{1}{\nu}[\grad p_0]_H^T\cdot\boldsymbol{\nu} = 0,\ \mathcal{V}=-\tfrac{1}{\nu}\grad p_0\cdot\boldsymbol{\nu},\ [p_0]_H^T  = -\beta\kappa\tau.
	\end{equation*}
	Similar models have been studied in, e.\,g., \cite{CristiniLowengrubNie,Greenspan,LowengrubFriboesJin,MacklinLowengrub}. We remark that the continuity condition for $\v_0$ across the interface (see \eqref{3_IE_lead_ord_1b_eq1_6}) is based on the positivity of the shear viscosity. \\[1ex]
	Sending the permeability to zero in \eqref{3_SHARPIF_NO_AT_1}, i.\,e., $\nu\to 0$, we obtain a Stokes model given by
	\begin{align*}
	-\divergence(2\eta\D\v_0 + \lambda\divergence(\v_0)\I-p_0\I) &= 0\qquad\text{in }\Omega_T\cup\Omega_H, \\
	\divergence (\v_0) &=\begin{cases}
	\alpha(\mathcal{P}\sigma_0^T-\mathcal{A} )&\text{in }\Omega_T,\\
	0&\text{in }\Omega_H,
	\end{cases}\\
	\laplace\sigma_0 &= \begin{cases}
	\mathcal{C}\sigma_0&\text{in }\Omega_T,\\
	0&\text{in }\Omega_H,
	\end{cases}
	\end{align*}
	and the free boundary conditions on $\Sigma(0)$ are given by
	\begin{equation*}
	\begin{aligned}
	&[\v_0]_H^T = \mathbf{0},\qquad [\sigma_0]_H^T = 0,\qquad \grad\sigma_0^T\cdot\boldsymbol{\nu} =\mathcal{D}\grad\sigma_0^H\cdot\boldsymbol{\nu},\\
	&\mathcal{V}=\v_0\cdot\boldsymbol{\nu},\qquad [2\eta\D\v_0 + \lambda\divergence(\v_0)\I-p_0\I]_H^T\boldsymbol{\nu}  = -\beta\kappa\tau\boldsymbol{\nu}.
	\end{aligned}
	\end{equation*}
	For similar models, we refer to \cite{FranksKing2,FranksKing,Friedman,Friedman5,Friedman2,FriedmanHu,WuCui}.\\ [1ex]
	We remark that a similar asymptotic analysis can be performed for the double obstacle potential
	\begin{equation}
	\label{3_def_double_obstacle}\psi(\varphi) \coloneqq \frac{1}{2}(1-\varphi^2)+I_{[-1,1]}(\varphi),\quad I_{[-1,1]}(\varphi)=
	\begin{cases}
	0&\text{if }|\varphi|\leq 1,\\
	+\infty&\text{elsewhere}.
	\end{cases}
	\end{equation}
	To do so one combines the arguments above with the asymptotic analysis in \cite{GarckeLamSitkaStyles}. We refer to \cite{Ebenbeck} for details.
	
	\section{Analytical results}
	
	Our aim is to analyse the following variant of \eqref{3_MEQ}
	\begin{subequations}\label{7_state_eq}
		\begin{alignat}{3}
		\label{7_state_eq_1a}\divergence(\v)  &= 0&&\qquad\text{ in }Q,\\
		\label{7_state_eq_1b}-\divergence(2\eta\D \v) + \nu\v -\grad p  &= -\epsilon\,\divergence(\grad\varphi\otimes\grad\varphi)&&\qquad\text{ in }Q,\\
		\label{7_state_eq_1c}  \del_t\varphi + \divergence(\varphi\v )&= \divergence (m(\varphi)\grad \mu)+g(\varphi,\sigma)h(\varphi)&&\qquad\text{ in }Q,\\
		\label{7_state_eq_1d} \mu&=-\epsilon\Delta \varphi +  \epsilon^{-1}\psi'(\varphi) -\chi_{\varphi}\sigma&&\qquad\text{ in }Q,\\
		\label{7_state_eq_1e}  \del_t\sigma + \divergence(\sigma\v ) &= \divergence(\chi_{\sigma}\grad\sigma-\chi_{\varphi}\grad\varphi)-f(\varphi,\sigma)h(\varphi)&&\qquad\text{ in }Q,
		\end{alignat}
	\end{subequations}
	with boundary and initial conditions of the form
	\begin{subequations}\label{7_boundary_cond}
		\begin{alignat}{3}
		\label{7_boundary_cond_1a}  \grad\varphi\cdot\n&=  \grad\mu\cdot\n = \grad\sigma\cdot\n=0 &&\qquad\text{ on }\oldSigma,\\
		\label{7_boundary_cond_1b} \v&= \mathbf{0} &&\qquad\text{ on }\oldSigma,\\
		\label{7_boundary_cond_1c} \varphi(0)&=\varphi_0,\quad\sigma(0)=\sigma_0&&\qquad\text{  in }\Omega. 
		\end{alignat}
	\end{subequations}
	The terms $h(\varphi)g(\varphi,\sigma)$ and $h(\varphi)f(\varphi,\sigma)$ act as source terms. 
	\begin{remark}
		\begin{enumerate}
			\item[(i)] We will consider a source term that satisfies $h(\varphi)=0$ for $\varphi\leq -1$ which is consistent with a mobility satisfying $m(-1)=0$ and a potential with a singularity in $\varphi = -1$. In general, it is sufficient to prescribe $h(-1)=0$ since, as discussed above, the degenerate mobility guarantees the bound $\varphi\geq -1$ a.\,e. in $Q$.
			\item[(ii)] Equation \eqref{7_state_eq_1a} holds, e.\,g., in the case of matched pure densities, i.\,e.\ $\bar\rho_1 = \bar\rho_2\eqqcolon \bar\rho$, and assuming no gain or loss of mass locally. Indeed, this gives (see \eqref{3_SOURTE_TERM_1}-\eqref{3_SOURTE_TERM_2})
			\begin{equation*}
			\Gamma_{\varphi}= \left(\frac{1}{\bar\rho_1}+\frac{1}{\bar\rho_2}\right)\Gamma = \frac{2}{\bar\rho}\,\Gamma,\qquad \Gamma_{\v} = \left(\frac{1}{\bar\rho_2}-\frac{1}{\bar\rho_1}\right)\Gamma = 0.
			\end{equation*}
			\item[(iii)] Equations \eqref{7_state_eq_1a} and \eqref{7_boundary_cond_1b} seem to be indispensable for the analysis. Indeed, the Dirichlet condition for $\v$ guarantees that 
			there is no transport 
			across the boundary of $\Omega$ which will be important for a priori estimates. Furthermore, as a consequence of \eqref{7_boundary_cond_1b} we require that $\divergence(\v)$ has zero mean for almost all $t\in (0,T)$. This is not compatible with a solution dependent source term in \eqref{7_state_eq_1a}.
			\item[(iv)] We also allow for $\nu=0$ which corresponds to the case of  Stokes flow.
		\end{enumerate}
	\end{remark}

	\subsection{Construction of approximating solutions}\label{7_Section_1}
	\begin{annahme}\label{7_ASS}
		Throughout Subsection 5.1, we make the following assumptions.
		\begin{enumerate}		
			\item[(i)]The potential $\psi\in C^2(\R)$ satisfies
			\begin{equation}
			\label{7_Assumption_on_psi}|\psi'(t)|\leq C_1(1+|t|),\quad |\psi''(t)|\leq C_2\quad \psi(t)\geq -C_3\quad\forall \,t\in\R
			\end{equation}
			with positive constants $C_1$, $C_2$ and $C_3$.
			\item[(ii)] The initial data satisfy $\varphi_0\in H^1$, $\sigma_0\in L^6$.	
			\item[(iii)] The functions $g,f\colon \R^2 \to\R$ are continuous such that
			\begin{equation}
			\label{7_Assumption_on_fg} |g(\varphi,\sigma)|\leq C_4(1+|\varphi|+|\sigma|),\qquad |f(\varphi,\sigma)|\leq C_5(1+|\varphi|+|\sigma|)\quad\forall\, \varphi,\sigma\in\R
			\end{equation}
			for positive constants $C_4$ and $C_5$. 
			\item[(iv)] The function $h\colon\R \to\R$ is continuous, non-negative and bounded such that 
			\begin{alignat*}{3}
			h(\varphi) &= 0 \quad&&\text{ if }\varphi\leq -1,\\
			C_6(1+\varphi)&\leq h(\varphi)\leq C_7(1+\varphi) \quad&&\text{ if }\varphi\in[-1,1],\\
			h(\varphi) &\leq C_8 \quad&&\text{ if }\varphi>1
			\end{alignat*}
			for positive constants $C_6$, $C_7$, $C_8$, and $C_6\leq C_7$.
			\item[(v)] For $d=2,3$, $\Omega\subset\R^d$ is a bounded domain with $C^3$-boundary.
			\item[(vi)] The constant $\eta>0$ is positive, the constants $\lambda\geq 0$, $\nu\geq 0$ are non-negative.	
		\end{enumerate}
	\end{annahme}
	
	\begin{remark}
		From Assumptions \ref{7_ASS}\textnormal{(iv)}, it follows that $h$ behaves like $(1+\varphi)_{\text{+}}\coloneqq \max(0,1+\varphi)$ near $\varphi = -1$. A typical example is given by
		\begin{equation*}
		h(\varphi)\coloneqq \max\left(0, \min\left(\frac{1}{2}(1+\varphi), 1\right)\right). 
		\end{equation*}
		Furthermore, we observe that
		\begin{equation*}
		h(\varphi)\leq h_{\infty}\quad \forall \, \varphi\in \R,
		\end{equation*}
		where $h_{\infty}\coloneqq \max\{2C_7,C_8\}$. 
	\end{remark}
	In the following we will assume w.\,l.\,o.\,g.\ that $\psi\geq 0$, as we can always add a constant to $\psi$ without changing the equation \eqref{7_state_eq_1d}.
	For $\delta> 0$ we consider the system \eqref{7_state_eq}-\eqref{7_boundary_cond} with \eqref{7_state_eq_1b} replaced by 
	\begin{equation}
	\label{7_state_eq_1b_approx}\delta\delt\v-\divergence(2\eta\D \v) + \nu\v -\grad p  = (\mu+\chi_{\varphi}\sigma)\grad\varphi\qquad\text{in }Q,
	\end{equation}
	and \eqref{7_boundary_cond_1c} replaced by
	\begin{equation}
	\label{7_boundary_cond_1c_approx} \varphi(0)=\varphi_0,\quad\sigma(0)=\sigma_{0,\delta},\quad\v(0) = \mathbf{0}\qquad\text{in }\Omega,
	\end{equation}
	where $\sigma_{0,\delta}\in H_{N}^2$ is the unique solution of
	\begin{equation}
	\label{7_boundary_cond_1d_approx}-\delta\laplace \sigma_{0,\delta} + \sigma_{0,\delta} = \sigma_0\quad\text{in }\Omega,\qquad \grad\sigma_{0,\delta}\cdot\n = 0\quad\text{on }\del\Omega.
	\end{equation}
	
	\begin{remark}
		The modified capillary term on the right hand side of \eqref{7_state_eq_1b_approx} simplifies the a priori estimates, since the convection term in \eqref{7_state_eq_1c} and the term on the right hand side of \eqref{7_state_eq_1b_approx} cancel out within the testing procedure. This is not the case if we use $-\divergence(\epsilon(\grad\varphi\otimes \grad\varphi))$, as we do not have the formula
		\begin{equation*}
		\inn{ -\epsilon(\grad\varphi\otimes \grad\varphi)}{\grad\v} = \inn{(\mu+\chi_{\varphi}\sigma)\grad\varphi}{\v}\quad\forall\,\mathbf{u}\in\mathbf{V}
		\end{equation*}
		on the Galerkin level.
	\end{remark}
	We now prove the following lemma:
	\begin{lemma}[Existence of approximating solutions]\label{7_Theorem_non_deg_mobility_approx}
		Let $m\in C^0(\R)$ with $m_0\leq m(s)\leq M_0$ for all $s\in\R$ with positive constants $m_0$, $M_0$, and let Assumptions \ref{7_ASS} be fulfilled. Then, there exists a quadruplet $(\varphi_{\delta},\mu_{\delta}, \sigma_{\delta}, \v_{\delta})$ with the regularity	
		\begin{align*}
		\varphi_{\delta}&\in H^1((H^1)^*)\cap L^{\infty}(H^1)\cap L^2(H^3),\quad \sigma_{\delta}\in  H^1(L^2)\cap L^{\infty}(H^1)\cap L^2(H^2),\\
		\mu_{\delta}&\in L^4(L^2)\cap L^2(H^1),\quad \v_{\delta}\in H^1(L^{\frac{3}{2}})\cap L^{\infty}(L^2)\cap L^\frac{16}{5}(\mathbf{V})\cap L^{\frac{8}{5}}(\H^2),
		\end{align*}
		recall \eqref{eq:boldV},
		such that the initial conditions and equations \eqref{7_state_eq_1a}, \eqref{7_state_eq_1c}-\eqref{7_state_eq_1e}, \eqref{7_state_eq_1b_approx} and \eqref{7_boundary_cond_1a}-\eqref{7_boundary_cond_1b}, \eqref{7_boundary_cond_1c_approx} are fulfilled in the sense that
		\begin{equation*}
		\varphi_{\delta}(0)=\varphi_0, \quad \sigma_{\delta}(0) = \sigma_{0,\delta},\quad \v_{\delta}(0)=\mathbf{0} \quad \text{a.\,e.\ in } \Omega,
		\end{equation*}
		and
		\begin{subequations}\label{7_weak_formulation_eq_approx}
			\begin{align}
			\label{7_weak_formulation_eq_approx_1a} 0&= \inndual{\del_t\varphi_{\delta}}{\xi}_{H^1} +  \inn{\grad\varphi_{\delta}\cdot\v_{\delta}}{\xi} + \inn{m(\varphi_{\delta})\grad\mu_{\delta}}{\grad\xi}  -\inn{g(\varphi_{\delta},\sigma_{\delta})h(\varphi_{\delta})}{\xi},\\
			\label{7_weak_formulation_eq_approx_1d} 0&=  \inn{\delta\delt\v_{\delta}}{\mathbf{u}} +2\eta\inn{\D \v_{\delta}}{\D\mathbf{u}} + \nu\inn{\v_{\delta}}{\mathbf{u}} - \inn{(\mu_{\delta}+\chi_{\varphi}\sigma_{\delta})\grad\varphi_{\delta}}{\mathbf{u}}
			\end{align}
			for all $\xi \in H^1$, $\mathbf{u}\in \mathbf{V}$, and for a.\,e.\ $t\in (0,T)$, whereas 
			\begin{alignat}{3}
			\label{7_weak_formulation_eq_approx_1b}\hspace{-2pt}\mu_{\delta} &= -\epsilon\laplace\varphi_{\delta} + \epsilon^{-1}\psi'(\varphi_{\delta})-\chi_{\varphi}\sigma_{\delta}&&\quad\text{a.\,e.\ in }Q,\\
			\label{7_weak_formulation_eq_approx_1c}
			\hspace{-2pt}\delt\sigma_{\delta} + \grad\sigma_{\delta}\cdot\v_{\delta} &= \chi_{\sigma}\laplace\sigma_{\delta} - \chi_{\varphi}\laplace\varphi_{\delta}-f(\varphi_{\delta},\sigma_{\delta})h(\varphi_{\delta})&&\quad\text{a.\,e.\ in }Q,\\
			\label{7_weak_formulation_eq_approx_1e}\hspace{-2pt}\grad\varphi_{\delta}\cdot\n &= \grad\sigma_{\delta}\cdot\n = 0&&\quad\text{a.\,e.\ on }\oldSigma.
			\end{alignat}
		\end{subequations}
		Moreover, the estimate
		\begin{align}
		\nonumber &\norm{\varphi_{\delta}}_{H^1((H^1)^*)\cap L^{\infty}(H^1)\cap L^2(H^3)} + \norm{\sigma_{\delta}}_{H^1(L^2)\cap L^{\infty}(H^1)\cap L^2(H^2)} \\
		\label{7_EN_ID_approx_system}&\quad + \norm{\mu_{\delta}}_{L^4(L^2)\cap L^2(H^1)}+ \norm{\v_{\delta}}_{H^1(L^{\frac{3}{2}})\cap L^{\infty}(L^2)\cap L^\frac{16}{5}(\mathbf{V})\cap L^2(\W^{1,\frac{10}{3}})\cap L^{\frac{8}{5}}(\H^2)}\leq C
		\end{align}
		is satisfied for a constant $C$ independent of $(\varphi_{\delta},\mu_{\delta},\sigma_{\delta},\v_{\delta})$.
	\end{lemma}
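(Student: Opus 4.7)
\medskip

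The natural strategy is a Galerkin approximation. I take $\{w_j\}_{j\in\N}\subset H^2_N$ to be eigenfunctions of the Neumann Laplacian (forming an orthonormal basis of $L^2$ and orthogonal bases of $H^1$ and $H^2_N$) and $\{\mathbf{y}_j\}_{j\in\N}\subset\mathbf{V}\cap\H^2$ to be Stokes eigenfunctions (orthonormal basis of $\H$, orthogonal in $\mathbf{V}$ and $\mathbf{V}\cap\H^2$). Setting $W_k\coloneqq\mathrm{span}\{w_1,\ldots,w_k\}$ and $\mathbf{V}_k\coloneqq\mathrm{span}\{\mathbf{y}_1,\ldots,\mathbf{y}_k\}$, I seek $(\varphi_k,\mu_k,\sigma_k)\in W_k^3$ and $\v_k\in\mathbf{V}_k$ satisfying the Galerkin projections of \eqref{7_weak_formulation_eq_approx}, where \eqref{7_weak_formulation_eq_approx_1b} becomes $\inn{\mu_k}{w}=\inn{-\epsilon\laplace\varphi_k+\epsilon^{-1}\psi'(\varphi_k)-\chi_\varphi\sigma_k}{w}$ for all $w\in W_k$. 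This allows $\mu_k$ to be eliminated in favour of $\varphi_k,\sigma_k$, reducing \eqref{7_weak_formulation_eq_approx_1a}, \eqref{7_weak_formulation_eq_approx_1c} and \eqref{7_weak_formulation_eq_approx_1d} to an ODE system for the coefficients of $(\varphi_k,\sigma_k,\v_k)$. By Assumptions \ref{7_ASS} the right-hand side is continuous (indeed locally Lipschitz), so Cauchy--Peano produces a local-in-time solution on $[0,T_k)$.

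\medskip

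The critical step is the uniform energy estimate, which exploits the cancellation pointed out in the remark preceding the lemma. Testing \eqref{7_weak_formulation_eq_approx_1a} with $\mu_k$, the Galerkin version of \eqref{7_weak_formulation_eq_approx_1b} with $\delt\varphi_k$, and \eqref{7_weak_formulation_eq_approx_1d} with $\v_k$, and summing, the two occurrences of $\inn{\mu_k\grad\varphi_k}{\v_k}$ cancel exactly (since $\mathbf{V}_k\subset\mathbf{V}$ and $\divergence\v_k=0$), leaving only the term $\chi_\varphi\inn{\sigma_k\grad\varphi_k}{\v_k}$, which can be rewritten using \eqref{7_weak_formulation_eq_approx_1a} and integration by parts. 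Adding a test of \eqref{7_weak_formulation_eq_approx_1c} by $\chi_\sigma\sigma_k$ (whose convective contribution vanishes by $\divergence\v_k=0$), and absorbing a cross term $\chi_\varphi\inn{m(\varphi_k)\grad\mu_k}{\grad\sigma_k}$ by Young's inequality into the dissipation $\inn{m(\varphi_k)\grad\mu_k}{\grad\mu_k}\ge m_0\|\grad\mu_k\|^2$, I obtain an inequality of the form
\begin{equation*}
\ddt\!\Bigl[\epsilon\tfrac{1}{2}\|\grad\varphi_k\|^2+\tfrac{1}{\epsilon}\!\intO\psi(\varphi_k)+\tfrac{\chi_\sigma}{2}\|\sigma_k\|^2+\tfrac{\delta}{2}\|\v_k\|^2\Bigr] + c\Bigl(\|\D\v_k\|^2+\nu\|\v_k\|^2+\|\grad\mu_k\|^2+\|\grad\sigma_k\|^2\Bigr) \le C\bigl(1+E_k\bigr),
\end{equation*}
using the growth bounds \eqref{7_Assumption_on_psi}, \eqref{7_Assumption_on_fg} and the boundedness of $h$ for the source contributions $\inn{gh}{\mu_k}$, $\chi_\varphi\inn{\sigma_k gh}{1}$ and $\inn{fh}{\sigma_k}$. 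Gronwall plus Korn's inequality then yields uniform bounds in $\varphi_k\in L^\infty(H^1)$, $\sigma_k\in L^\infty(L^2)\cap L^2(H^1)$, $\v_k\in L^\infty(\L^2)\cap L^2(\mathbf{V})$ and $\grad\mu_k\in L^2(\L^2)$.

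\medskip

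To upgrade regularity, I integrate \eqref{7_weak_formulation_eq_approx_1b} over $\Omega$ and use the linear growth of $\psi'$ to bound the mean of $\mu_k$ via $\|\varphi_k\|_{L^2}$ and $\|\sigma_k\|_{L^1}$, so Poincar\'e gives $\mu_k\in L^2(H^1)$, and a Gagliardo--Nirenberg interpolation with $\varphi_k\in L^\infty(L^6)$ (via Sobolev embedding in $d\le 3$) gives $\mu_k\in L^4(L^2)$. Elliptic regularity applied to \eqref{7_weak_formulation_eq_approx_1b}, viewed as $-\epsilon\laplace\varphi_k=\mu_k+\chi_\varphi\sigma_k-\epsilon^{-1}\psi'(\varphi_k)\in L^2(L^2)$ with Neumann data, yields $\varphi_k\in L^2(H^2)$, and a bootstrap using the improved regularity of the right-hand side gives $\varphi_k\in L^2(H^3)$. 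Testing \eqref{7_weak_formulation_eq_approx_1c} with $-\laplace\sigma_k$ (or its projection) and parabolic regularity deliver $\sigma_k\in H^1(L^2)\cap L^\infty(H^1)\cap L^2(H^2)$. Finally, the stationary Stokes regularity applied to \eqref{7_weak_formulation_eq_approx_1d} with forcing $(\mu_k+\chi_\varphi\sigma_k)\grad\varphi_k-\delta\delt\v_k$, combined with interpolation between the established regularities of $\mu_k,\sigma_k,\varphi_k$ (Hölder and Sobolev in $d\leq 3$), produces the listed spaces $L^{16/5}(\mathbf{V})\cap L^{8/5}(\H^2)\cap L^2(\W^{1,10/3})$, and the time derivative bound $\delt\v_k\in L^{3/2}(\L^{3/2})$ (hence $H^1(\L^{3/2})$ after integration) follows from testing \eqref{7_weak_formulation_eq_approx_1d} with suitable elements of $\mathbf{V}_k$ dual to $\L^{3}$.

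\medskip

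The passage $k\to\infty$ is routine once the above estimates are uniform: the Aubin--Lions--Simon lemma gives strong convergence of $\varphi_k,\sigma_k,\v_k$ in $L^2(L^2)$ (in fact better, due to the $H^{-1}$ or $(H^1)^*$ control of the time derivatives arising from \eqref{7_weak_formulation_eq_approx_1a}, \eqref{7_weak_formulation_eq_approx_1c}, \eqref{7_weak_formulation_eq_approx_1d}), which suffices to identify the nonlinear limits $m(\varphi_k)\grad\mu_k$, $g(\varphi_k,\sigma_k)h(\varphi_k)$, $f(\varphi_k,\sigma_k)h(\varphi_k)$, $(\mu_k+\chi_\varphi\sigma_k)\grad\varphi_k$ and the convective terms. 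The main obstacle is the energy identity in Step 2: making the cancellation $\inn{\mu_k\grad\varphi_k}{\v_k}-\inn{\grad\varphi_k\cdot\v_k}{\mu_k}=0$ work precisely at the Galerkin level (it is exactly this that forces the modification $(\mu+\chi_\varphi\sigma)\grad\varphi$ in \eqref{7_state_eq_1b_approx} rather than $-\divergence(\epsilon\grad\varphi\otimes\grad\varphi)$) and then quantitatively absorbing the non-conservative source contributions while keeping the $\chi_\varphi\inn{m(\varphi_k)\grad\mu_k}{\grad\sigma_k}$ cross term under control. Once this energy inequality is in place, every other bound follows by now-standard bootstrap and interpolation arguments.
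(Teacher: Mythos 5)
Your overall strategy is the one the paper follows: a Galerkin scheme, an energy estimate built on exactly the cancellation that motivates the modified capillary term in \eqref{7_state_eq_1b_approx}, absorption of the cross term $\chi_\varphi\inn{m(\varphi)\grad\mu}{\grad\sigma}$ into the dissipation, Gronwall and Korn, followed by elliptic/parabolic bootstrap and a standard limit passage. One caveat already at the energy level: you only test the Cahn--Hilliard equation with $\mu_k$ (plus, implicitly, $\chi_\varphi\sigma_k$), so your energy contains $\norm{\grad\varphi_k}_{\L^2}^2$ and $\int_\Omega\psi(\varphi_k)$ but \emph{not} $\norm{\varphi_k}_{L^2}^2$; since $\psi$ is only assumed bounded below (it need not be coercive), your claimed $L^\infty(H^1)$ bound for $\varphi_k$, and with it the bound on the mean of $\mu_k$ that you need for Poincar\'e, does not yet follow. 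The paper closes this by adding $\varphi_\delta$ to the test function (test with $\mu_\delta+\chi_\varphi\sigma_\delta+\varphi_\delta$); alternatively one must track the mean of $\varphi_k$ via $\xi=1$ and a Poincar\'e--Wirtinger/Gronwall argument. Similarly, ``Gagliardo--Nirenberg with $\varphi_k\in L^\infty(L^6)$'' does not by itself give $\mu_k\in L^4(L^2)$: the $L^4$ integrability in time comes from first testing the $\mu$-equation with $-\laplace\varphi$ to get $\norm{\laplace\varphi_\delta}_{L^2}^2\le C(1+\norm{\grad\mu_\delta}_{\L^2})$, hence $\varphi_\delta\in L^4(H^2)$, and then reading $\mu_\delta$ off the equation.

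The genuine gap is the velocity regularity $\v_\delta\in H^1(\L^{3/2})\cap L^{8/5}(\H^2)$ (and the derived $L^{16/5}(\mathbf{V})\cap L^2(\W^{1,10/3})$ bounds). The paper obtains these from maximal regularity for the \emph{instationary} Stokes system $\delta\delt\v-\divergence(2\eta\D\v)+\nu\v+\grad p=(\mu_\delta+\chi_\varphi\sigma_\delta)\grad\varphi_\delta$ with forcing in $L^{8/5}(\L^2)\cap L^2(\L^{3/2})$ (citing the handbook result of Giga). Your plan --- stationary Stokes regularity with $\delta\delt\v_k$ moved into the forcing, and $\delt\v_k$ bounded ``by duality against $\L^3$'' --- is circular as described: pairing $2\eta\inn{\D\v_k}{\D\mathbf{u}}$ with test functions that are merely controlled in $\L^3$ requires integrating by parts, i.e.\ already knowing $\v_k\in\H^2$, which is precisely what the stationary regularity step is supposed to produce; without it the equation only yields $\delt\v_\delta\in L^2(\mathbf{V}^*)$, far too weak for $H^1(\L^{3/2})$. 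This gap also infects your ordering: you run the $-\laplace\sigma_k$ estimate \emph{before} improving the velocity, but the convection term $\inn{\grad\sigma_\delta\cdot\v_\delta}{\laplace\sigma_\delta}$ can only be absorbed once $\v_\delta\in L^2(\W^{1,\frac{10}{3}})\hookrightarrow L^2(\L^\infty)$ is available (with only $\v_\delta\in L^2(\H^1)$ the Gronwall coefficient fails to be integrable in time). Both issues are repaired by following the paper's sequence: energy estimate, $\varphi$-- and $\mu$--regularity, instationary Stokes maximal regularity, and only then the higher-order nutrient estimate and the time-derivative bounds.
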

	\begin{remark}
		With the above regularity, we can reconstruct the pressure $p_\delta\in L^{\frac{8}{3}}(L_0^2)\cap L^{\frac{8}{5}}(H^1)$ such that 
		\begin{equation*}
		\delta\delt\v-\divergence(2\eta\D \v) + \nu\v -\grad p  = (\mu+\chi_{\varphi}\sigma)\grad\varphi\quad\text{a.\,e.\ in }Q
		\end{equation*}
		and
		\begin{equation*}
		\norm{p_{\delta}}_{L^{\frac{8}{3}}(L_0^2)\cap L^{\frac{8}{5}}(H^1)}\leq C
		\end{equation*}
		holds for a constant $C$ independent of $(\varphi_{\delta},\mu_{\delta},\sigma_{\delta},\v_{\delta},p_{\delta})$, see \cite[Lem. II.2.2.2]{Sohr}.
	\end{remark}
	\begin{proof}[Proof of Lemma \ref{7_Theorem_non_deg_mobility_approx}]
		The proof is based on ideas presented in \cite{FritzEtAl} and \cite[Theorem 2.1]{GarckeLam4}. We will only present the a priori estimates on a formal level. However, they can be justified rigorously within a Galerkin scheme, see \cite{FritzEtAl} for details.\\ [1ex]
		Using $\divergence (\v_{\delta})=0$ a.\,e.\ in $\Omega$ and $\v_{\delta}=\mathbf{0}$ a.\,e.\ on $ \del\Omega$, we deduce
		\begin{align}
		\label{apriori_approx_2}- \inn{\grad\sigma_{\delta}\cdot\v_{\delta}}{\sigma_{\delta}}=  \tfrac{1}{2}\inn{\grad\left(|\sigma_{\delta}|^2\right)}{\v_{\delta}}=0, \quad  \inn{\grad\varphi_{\delta}\cdot\v_{\delta}}{\varphi_\delta} =0.
		\end{align}
		Choosing $\xi = \mu_{\delta}+\chi_{\varphi}\sigma_{\delta} + \varphi_{\delta}$ in \eqref{7_weak_formulation_eq_approx_1a}, $\mathbf{u}=\v_{\delta}$ in \eqref{7_weak_formulation_eq_approx_1d}, multiplying \eqref{7_weak_formulation_eq_approx_1b} with $-\delt\varphi_{\delta}$, \eqref{7_weak_formulation_eq_approx_1c} with $D\sigma_{\delta}$ for $D>0$ to be chosen, integrating by parts and summing the resulting identities, we arrive at
		\begin{align}
		\nonumber &\tfrac{\d}{\dt}\left(\tfrac{1}{2}\norml{2}{\varphi_{\delta}}^2+\tfrac{\epsilon}{2}\normL{2}{\grad\varphi_{\delta}}^2+\epsilon^{-1}\norml{1}{\psi(\varphi_{\delta})}+\tfrac{D}{2}\norml{2}{\sigma_{\delta}}^2 + \tfrac{\delta}{2}\normL{2}{\v_{\delta}}^2\right) \\
		\nonumber	&\qquad + \normL{2}{\sqrt{m(\varphi_{\delta})}\grad\mu_{\delta}}^2+D\chi_{\sigma}\normL{2}{\grad \sigma_{\delta}}^2+ 2\eta\normL{2}{\D\v_{\delta}}^2+\nu\normL{2}{\v_{\delta}}^2\\
		\nonumber &\quad=   \inn{g(\varphi_{\delta},\sigma_{\delta})h(\varphi_{\delta})}{\mu_{\delta}+\chi_{\varphi}\sigma_{\delta}+\varphi_\delta} -  D\inn{f(\varphi_{\delta},\sigma_{\delta})h(\varphi_{\delta})}{\sigma_{\delta}}\\
		\nonumber&\qquad + D\chi_{\varphi}\inn{\grad\varphi_{\delta}}{\grad\sigma_{\delta}}- \inn{m(\varphi_{\delta})\grad\mu_{\delta}}{\grad(\chi_{\varphi}\sigma_{\delta}+\varphi_{\delta})}\\
		\label{7_PRO_NDM_EQ_1} &\quad \eqqcolon I_1+I_2+I_3+I_4.
		\end{align}
		We now estimate the terms on the right hand side of \eqref{7_PRO_NDM_EQ_1} individually. By $C$ we denote a generic constant independent of $(\varphi_{\delta},\mu_{\delta},\sigma_{\delta},\v_{\delta})$ and we will frequently use H\"older's and Young's inequalities.\\ [1ex]
		In order to control the term involving $g$, we need a bound on $(\mu_{\delta}+\chi_{\varphi}\sigma_{\delta},1)$.
		Taking $v=1$ in $\eqref{7_weak_formulation_eq_approx_1b}_1$ and using \eqref{7_Assumption_on_psi}, we see that
		\begin{equation}
		\label{apriori_approx_3}\left|\inn{\mu_{\delta}+\chi_{\varphi}\sigma_{\delta}}{1}\right| = \left| \inn{\epsilon^{-1}\psi'(\varphi_{\delta})}{1}\right|\leq C\left(1+\norml{2}{\varphi_{\delta}}\right).
		\end{equation}
		Applying \eqref{7_Assumption_on_fg}, we obtain from Poincaré's inequality that
		\begin{align*}
		\left|I_1 \right| \leq C\left(1+\norml{2}{\varphi_{\delta}}^2 +\norml{2}{\sigma_{\delta}}^2\right)  + \tfrac{D\chi_{\sigma}}{4}\normL{2}{\grad\sigma_{\delta}}^2+\tfrac{m_0}{4}\normL{2}{\grad\mu_{\delta}}^2.
		\end{align*}
		For the term involving $f$ we infer
		\begin{align*}
		&\left|I_2\right|\leq C\left(1+\norml{2}{\sigma_{\delta}}^2+\norml{2}{\varphi_{\delta}}^2\right).
		\end{align*}
		Moreover, we obtain
		\begin{equation*}
		|I_3|\leq \tfrac{D\chi_{\sigma}}{4}\normL{2}{\grad\sigma_{\delta}}^2 + \tfrac{D\chi_{\varphi}^2}{\chi_{\sigma}}\normL{2}{\grad\varphi_{\delta}}^2.
		\end{equation*}
		For the last term on the right hand side of \eqref{7_PRO_NDM_EQ_1}, we obtain
		\begin{equation*}
		\left|I_4\right|\leq  C\normL{2}{\grad\varphi_{\delta}}^2+ \tfrac{2M_0^2\chi_{\varphi}^2}{m_0}\normL{2}{\grad\sigma_{\delta}}^2+ \tfrac{m_0}{4}\normL{2}{\grad\mu_{\delta}}^2 .
		\end{equation*}
		On account of the last four estimates and the assumptions on $m(\cdot)$, by choosing $D=\max\left( 1, \tfrac{4M_0^2\chi_{\varphi}^2+m_0}{\chi_{\sigma}m_0}\right)$ we obtain from \eqref{7_PRO_NDM_EQ_1} that
		\begin{align}
		\nonumber &\tfrac{\d}{\dt}\left(\tfrac{1}{2}\norml{2}{\varphi_{\delta}}^2+\tfrac{\epsilon}{2}\normL{2}{\grad\varphi_{\delta}}^2+\epsilon^{-1}\norml{1}{\psi(\varphi_{\delta})}+\tfrac{1}{2}\norml{2}{\sigma_{\delta}}^2 + \tfrac{\delta}{2}\normL{2}{\v_{\delta}}^2\right) \\
		\nonumber &\qquad + \tfrac{m_0}{2}\normL{2}{\grad\mu_{\delta}}^2+\tfrac{1}{2}\normL{2}{\grad \sigma_{\delta}}^2+ 2\eta\normL{2}{\D\v_{\delta}}^2+\nu\normL{2}{\v_{\delta}}^2\\
		\label{7_PRO_NDM_EQ_6} &\quad \leq   C\left( 1 + \norml{2}{\varphi_{\delta}}^2+\normL{2}{\grad\varphi_{\delta}}^2 + \norml{2}{\sigma_{\delta}}^2\right) .
		\end{align}
		Integrating \eqref{7_PRO_NDM_EQ_6} in time from $0$ to $s\in (0,T]$, using the assumptions on $\psi(\cdot)$ and the initial data along with \eqref{apriori_approx_3}, a Gronwall argument yields
		\begin{align}
		\nonumber&\esssup_{s\in (0,T]}\big(\norm{\psi(\varphi_{\delta})(s)}_{L^1} + \normh{1}{\varphi_{\delta}(s)}^2 + \norml{2}{\sigma_{\delta}(s)}^2 + \normL{2}{\v_{\delta}(s)}^2\big)\\
		\label{7_PRO_NDM_EQ_14}&\quad + \intT\normh{1}{\mu_{\delta}}^2 + \normL{2}{\grad\sigma_{\delta}}^2 + \norm{\v_{\delta}}_{\H^1}^2\dt\leq C.
		\end{align}
		\textbf{Higher order estimates.}
		Using regularity theory and interpolation arguments as in \cite{EbenbeckGarcke2}, and using the assumptions on $\psi(\cdot)$, we obtain that
		\begin{equation}
		\label{7_PRO_NDM_EQ_15}\norm{\varphi_{\delta}}_{L^4(H^2)\cap L^2(H^3)} + \norm{\mu_{\delta}}_{L^4(L^2)}\leq \tilde{C}.
		\end{equation}
		In particular, we obtain that $\mu_{\delta}$ is uniformly bounded in $L^4(L^2)$. By Gagliardo--Nirenberg's inequality and Sobolev embedding theory, we have the continuous embeddings $L^{\infty}(\L^2)\cap L^2(\H^2)\hookrightarrow L^{\frac{8}{3}}(\L^{\infty})$ and $H^1\subset L^6$. Then, it follows that $(\mu_{\delta}+\chi_{\varphi}\sigma_{\delta})\grad\varphi_{\delta}$ is bounded uniformly in $L^{\frac{8}{5}}(\L^2)\cap L^2(\L^{\frac{3}{2}})$. By classical regularity theory for the instationary Stokes equation (see, e.\,g., \cite[II.3, Cor. 4, p. 148]{GigaHandbook}), we conclude that
		\begin{equation*}
		\norm{\v_{\delta}}_{H^1(L^{\frac{3}{2}})\cap L^{\frac{8}{5}}(\H^2)}\leq C.
		\end{equation*}
		Applying Gagliardo--Nirenberg's inequality combined with \eqref{7_PRO_NDM_EQ_14} and using the last bound, it holds 
		\begin{equation}
		\label{7_PRO_NDM_EQ_17}\norm{\v_{\delta}}_{H^1(L^{\frac{3}{2}})\cap L^{\frac{16}{5}}(\mathbf{V})\cap L^2(\W^{1,\frac{10}{3}})\cap L^{\frac{8}{5}}(\H^2)}\leq C.
		\end{equation}
		Now, we derive higher order estimates for the nutrient concentration $\sigma_{\delta}$. Multiplying \eqref{7_weak_formulation_eq_approx_1c} with $-\laplace\sigma_{\delta}$ and integrating by parts, we obtain
		\begin{align}
		\label{7_PRO_NDM_EQ_18}\tfrac{\d}{\dt}\tfrac{1}{2}\normL{2}{\grad\sigma_{\delta}}^2 + \chi_{\sigma}\norml{2}{\laplace\sigma_{\delta}}^2 =  \inn{\chi_{\varphi}\laplace\varphi_{\delta} + f(\varphi_{\delta},\sigma_{\delta})h(\varphi_{\delta})+\grad\sigma_{\delta}\cdot\v_{\delta}}{\laplace\sigma_{\delta}}.
		\end{align}
		Using the assumptions on $f$, $h$ and \eqref{7_PRO_NDM_EQ_14}-\eqref{7_PRO_NDM_EQ_15} yields
		\begin{equation*}
		\left|\inn{\chi_{\varphi}\laplace\varphi_{\delta} + f(\varphi_{\delta},\sigma_{\delta})h(\varphi_{\delta})}{\laplace\sigma_{\delta}}\right|\leq C(1+\norml{2}{\laplace\varphi_{\delta}}^2) + \tfrac{\chi_{\sigma}}{4}\norml{2}{\laplace\sigma_{\delta}}^2.
		\end{equation*}
		With similar arguments and using the Sobolev embedding $\W^{1,\frac{10}{3}}\subset \L^{\infty}$, we infer
		\begin{align*}
		\left| \inn{\grad\sigma_{\delta}\cdot\v_{\delta}}{\laplace\sigma_{\delta}}\right|\leq C\normL{2}{\grad\sigma_{\delta}}^2\normW{1}{\frac{10}{3}}{\v_{\delta}}^2 + \tfrac{\chi_{\sigma}}{4}\norml{2}{\laplace\sigma_{\delta}}^2.
		\end{align*}
		Employing the last two inequalities in \eqref{7_PRO_NDM_EQ_18}, 
		integrating the resulting inequality in time from $0$ to $s\in (0,T]$, using \eqref{7_PRO_NDM_EQ_14}-\eqref{7_PRO_NDM_EQ_17} and elliptic regularity theory, a Gronwall argument yields
		\begin{equation}
		\label{7_PRO_NDM_EQ_19}\norm{\sigma_{\delta}}_{L^{\infty}(H^1)\cap L^2(H^2)}\leq C.
		\end{equation}
		\textbf{Estimates for the time derivatives and the convection terms.} 
		By \eqref{7_PRO_NDM_EQ_14}, \eqref{7_PRO_NDM_EQ_17}, the Sobolev embedding $\W^{1,\frac{10}{3}}\subset \L^{\infty}$ and H\"older's inequality, we have
		\begin{equation*}
		\norm{\grad\varphi_{\delta}\cdot\v_{\delta}}_{L^2(L^2)}\leq C\norm{\grad\varphi_{\delta}}_{L^{\infty}(\L^2)}\norm{\v_{\delta}}_{L^2(\L^{\infty})}\leq C\norm{\varphi_{\delta}}_{L^{\infty}(H^1)}\norm{\v_{\delta}}_{L^2(\W^{1,\frac{10}{3}})}\leq C,
		\end{equation*}
		and therefore
		\begin{equation}
		\label{7_PRO_NDM_EQ_20}\norm{\divergence(\varphi_{\delta}\v_{\delta})}_{L^2(L^2)}\leq C.
		\end{equation}
		Using the equation \eqref{7_weak_formulation_eq_approx_1a} for $\delt\varphi_\delta$ and \eqref{7_PRO_NDM_EQ_14}, \eqref{7_PRO_NDM_EQ_20}, we find that similar as in \cite{EbenbeckGarcke}
		\begin{equation*}
		\norm{ \delt\varphi_{\delta}}_{L^2((H^1)^*)}\leq C.
		\end{equation*}
		With exactly the same arguments as above, we obtain
		\begin{equation*}
		\norm{\divergence(\sigma_{\delta}\v_{\delta})}_{L^2(L^2)}\leq C.
		\end{equation*}
		Then, using the assumptions on $f$ and $h$, \eqref{7_PRO_NDM_EQ_14}-\eqref{7_PRO_NDM_EQ_15} and \eqref{7_PRO_NDM_EQ_19}, it follows that
		\begin{equation*}
		\norm{ \delt\sigma_{\delta}}_{L^2(L^2)}\leq C.
		\end{equation*}
		Summarising the previous estimates,we obtain \eqref{7_EN_ID_approx_system}.
		These a priori estimates are enough to pass to the limit within a Galerkin scheme. We omit the details and refer the reader to \cite{EbenbeckGarcke,EbenbeckGarcke2,FritzEtAl}.\\[1ex]
		\textbf{Reconstruction of the pressure.} By standard theory for the instationary Stokes equation (see, e.\,g., \cite[II.3, Cor. 4, p. 148]{GigaHandbook}) and using that $(\mu_{\delta}+\chi_{\varphi}\sigma_{\delta})\grad\varphi_{\delta}\in L^{\frac{8}{5}}(\L^2)\cap L^2(\L^{\frac{3}{2}})$, there exists a unique pressure $p_{\delta}\in L^{\frac{8}{5}}(H^1)\cap L^{2}(W^{1,\frac{3}{2}})$ satisfying $\inn{p_{\delta}}{1} = 0$.
	\end{proof}
	
	\subsection{The degenerate case}
	\subsubsection{Introduction of the mathematical setting}
	In the following let $\Omega\subset \R^d$, $d=2,3$, be a bounded domain with $ \del\Omega\in C^3$. 
	We assume that $\psi(\cdot)$ can be decomposed as
	\begin{equation*}
	\psi(\varphi) \coloneqq \psi^1(\varphi) + \psi^2(\varphi)
	\end{equation*}
	with functions $\psi^1$, $\psi^2$, where $\psi^2\in C^2([-1,+\infty))$ satisfies
	\begin{equation*}
	|(\psi^2)''(\varphi)|\leq C\quad\forall \,\varphi\in [1,+\infty),
	\end{equation*}
	and $\psi^1\colon (-1,+\infty)\to\R$ is convex and of the form
	\begin{equation}\label{7_ASS_2_DEG_PSI}
	(\psi^1)''(\varphi) = \max\left(0, \min\left(\tfrac{1}{2}(1+\varphi), 1\right)\right)^{-p_0}F(\varphi)\quad\text{for some }p_0\in[1,2]
	\end{equation}
	with a $C^1$-function $F\colon[-1,+\infty)\to \R_0^+$ satisfying $\norm{F}_{C^1[-1,+\infty)}\leq F_0$ for a positive constant $F_0$. Hence, $\psi$ is allowed to be singular in the convex part as $\varphi\to -1$. Without loss of generality, we assume that $(\psi^1)'(0)=(\psi^1)(0) = 0$.\\ [1ex]
	We introduce a degenerate mobility $m(\cdot)$ of the form
	\begin{equation}
	\label{7_ASS_4_DEG_MOB}m(\varphi) = \max\left(0, \min\left(\tfrac{1}{2}(1+\varphi), 1\right)\right)^{q_0}\bar{m}(\varphi)\quad \text{with } q_0\in [1,2],\, q_0\geq p_0,
	\end{equation}
	with $p_0$ as in \eqref{7_ASS_2_DEG_PSI}, and a $C^1$-function $\bar{m}\colon[-1,+\infty) \to\R$ satisfying
	\begin{align*}
	m_0\leq \bar{m}(\varphi)\leq M_0 \quad\forall \,\varphi\in[-1,+\infty),\qquad \norm{\bar{m}}_{C^1[-1,+\infty)}\leq M_1
	\end{align*}
	for positive constants $m_0$, $M_0$ and $M_1$. We extend the definition of $m(\cdot)$ to all of $\R$ by $m(\varphi)=0$ for $\varphi<-1$. \\ [1ex]
	Finally, we define the entropy like function $\Phi:(-1,+\infty) \to\R_0^+$ by
	\begin{equation*}
	\Phi''(\varphi)=\frac{1}{m(\varphi)},\quad \Phi'(0)=0,\quad \Phi(0)=0.
	\end{equation*}
	
	\subsubsection{The main theorem}
	The goal of this section is to prove the following theorem:
	\begin{theorem}[degenerate case]\label{7_THM_DEG_MOB}
		Let $\psi$ be as in Subsection 5.2.1 and let Assumptions \ref{7_ASS}, \textnormal{(ii)}-\textnormal{(vi)} be fulfilled. In addition, we assume that $\varphi_0\geq -1$ a.\,e.\ in $\Omega$ and 
		\begin{equation*}
		\inn{\psi(\varphi_0)+\Phi(\varphi_0)}{1}\leq C
		\end{equation*}
		for a positive constant $C$.
		Then, there exists a quadruplet $(\varphi,\textbf{J},\sigma,\v)$ satisfying
		\begin{enumerate}
			\item[a)]$\varphi\in H^1((H^1)^*)\cap C([0,T];L^2)\cap L^{\infty}(H^1)\cap L^2(H^2)$,
			\item[b)] $\varphi(0)=\varphi_0$ in $L^2$ and $ \grad\varphi\cdot\n = 0$ a.\,e. on $\oldSigma$,
			\item[c)] $\varphi\geq -1$ a.\,e.\ in $Q$,
			\item[d)] $\sigma\in H^1((H^1)^*)\cap C^0(L^2)\cap L^{\infty}(L^6)\cap L^2(H^1)$,
			\item[e)] $\sigma(0)=\sigma_0$ in $L^2$,
			\item[f)] $\textbf{J}\in L^2(\L^2)$,
			\item[g)] $\v \in L^2(\H^1)$,
		\end{enumerate}
		and solving
		\begin{subequations}
			\begin{align}
			\label{7_WF_DEGMOB_1}&\intT  \inndual{\del_t\varphi}{\xi}_{H^1}\dt = \intT \inn{\textbf{J}}{\grad\xi}\dt + \intT  \inn{g(\varphi,\sigma)h(\varphi)-\grad\varphi\cdot\v}{\xi}\dt,\\
			\label{7_WF_DEGMOB_2}& \inndual{\del_t\sigma}{\phi}_{H^1} =  \inn{-\chi_{\sigma}\grad\sigma+\chi_{\varphi}\grad\varphi+\sigma\v}{\grad\phi} - \inn{f(\varphi,\sigma)h(\varphi)}{\phi},\\
			\label{7_WF_DEGMOB_4} & 2\eta\inn{\D \v}{\D\mathbf{u}} + \nu\inn{\v}{\mathbf{u}}= \epsilon\inn{\grad\varphi\otimes \grad\varphi}{\grad\mathbf{u}}
			\end{align}
			for almost all $t\in (0,T)$ and all $\xi\in L^2(H^1)$, $\phi\in H^1$, $\mathbf{u}\in \mathbf{V}$, where
			\begin{equation*}
			\textbf{J} = -m(\varphi)\grad (-\epsilon\laplace\varphi + \epsilon^{-1}\psi'(\varphi)-\chi_{\varphi}\sigma)
			\end{equation*}
			holds in the sense that
			\begin{equation}
			\label{7_WF_DEGMOB_5}\intT \inn{\mathbf{J}}{\boldsymbol{\eta}}\dt = -\intT \inn{\epsilon\laplace\varphi}{\divergence(m(\varphi)\boldsymbol{\eta})} + \inn{\epsilon^{-1}(m \psi'')(\varphi)\grad\varphi-\chi_{\varphi}m(\varphi)\grad\sigma}{\boldsymbol{\eta}}\dt
			\end{equation}
		\end{subequations}
		for all $\boldsymbol{\eta}\in L^2(\H^1)\cap L^{\infty}(\L^{\infty})$ with $\boldsymbol{\eta}\cdot\n=0$ a.\,e.\ on $ \oldSigma$. Furthermore, there exists a unique pressure $p\in L^{\frac{4}{3}}(L_0^2)$ satisfying 
		\begin{equation*}
		-\grad p = -\divergence\left(2\eta\D\v - \epsilon(\grad\varphi\otimes\grad\varphi)\right)+\nu\v\quad\text{in }L^{\frac{4}{3}}(\mathbf{V}^*).
		\end{equation*}
	\end{theorem}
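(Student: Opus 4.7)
The plan is a two-parameter regularization. For $\tau>0$, introduce a strictly positive mobility $m_\tau:=m+\tau$ and a $C^2$ regularization $\psi_\tau$ of $\psi$ that coincides with $\psi$ outside a $\tau$-neighborhood of $\varphi=-1$ (and is further cut off at $+\infty$ so as to satisfy \eqref{7_Assumption_on_psi}). Lemma~\ref{7_Theorem_non_deg_mobility_approx} then furnishes approximate solutions $(\varphi_{\delta,\tau},\mu_{\delta,\tau},\sigma_{\delta,\tau},\v_{\delta,\tau})$ of the system with $m_\tau$, $\psi_\tau$ and the inertial regularization $\delta\del_t\v$ from \eqref{7_state_eq_1b_approx}. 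The strategy is to pass $\delta\to 0$ first, producing the stationary Brinkman form of the momentum equation, and then $\tau\to 0$.

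Two families of a priori bounds, uniform in $(\delta,\tau)$, drive the proof. The energy estimate from Lemma~\ref{7_Theorem_non_deg_mobility_approx} survives and gives $\varphi\in L^\infty(H^1)$, $\sigma\in L^\infty(L^2)\cap L^2(H^1)$, $\v\in L^2(\H^1)$ and $\sqrt{m_\tau(\varphi)}\grad\mu\in L^2(\L^2)$. The essential new ingredient is the \emph{entropy estimate} obtained by testing the $\varphi$-equation by $\Phi_\tau'(\varphi)$, where $\Phi_\tau''=1/m_\tau$. The convection term $\inn{\grad\varphi\cdot\v}{\Phi_\tau'(\varphi)}=\inn{\v}{\grad\Phi_\tau(\varphi)}=-\inn{\divergence\v}{\Phi_\tau(\varphi)}=0$ vanishes thanks to $\divergence\v=0$ and $\v|_{\del\Omega}=\mathbf{0}$, while the diffusive term collapses via $m_\tau\Phi_\tau''\equiv 1$ to $\inn{\grad\mu}{\grad\varphi}$. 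Substituting $\mu=-\epsilon\laplace\varphi+\epsilon^{-1}\psi_\tau'(\varphi)-\chi_\varphi\sigma$ and integrating by parts produces $\epsilon\norml{2}{\laplace\varphi}^2+\epsilon^{-1}\inn{\psi_\tau''(\varphi)\grad\varphi}{\grad\varphi}-\chi_\varphi\inn{\grad\sigma}{\grad\varphi}$; the convex part $(\psi^1)''\geq 0$ contributes non-negatively, the $(\psi^2)''$-part is controlled by the $H^1$ bound on $\varphi$, and the nutrient cross term is absorbed by Young's inequality and the $L^2(H^1)$ bound on $\sigma$. The source contribution $\inn{g(\varphi,\sigma)h(\varphi)}{\Phi_\tau'(\varphi)}$ is uniformly bounded because $h(\varphi)\leq C(1+\varphi)_+$ while $|\Phi_\tau'(\varphi)|\leq C(1+\varphi)^{1-q_0}$, so the product is integrable whenever $q_0\in[1,2]$. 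Gronwall's lemma then delivers uniform bounds on $\Phi_\tau(\varphi)$ in $L^\infty(L^1)$ and on $\varphi$ in $L^2(H^2)$; the former ensures $\varphi\geq -1$ a.e.\ in the limit, since $\Phi$ blows up as its argument approaches $-1$.

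The main obstacle is identifying the limit of the flux $\mathbf{J}_\tau=-m_\tau(\varphi_\tau)\grad\mu_\tau$ as $\tau\to 0$, since $\mu_\tau$ loses its uniform $L^2(H^1)$ control. The resolution is the formal identity $\mathbf{J}=\epsilon\,m(\varphi)\grad\laplace\varphi-\epsilon^{-1}m(\varphi)\psi''(\varphi)\grad\varphi+\chi_\varphi m(\varphi)\grad\sigma$, given meaning weakly through \eqref{7_WF_DEGMOB_5} after integration by parts against $\boldsymbol{\eta}\in L^2(\H^1)\cap L^\infty(\L^\infty)$ with $\boldsymbol{\eta}\cdot\n=0$. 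The crucial structural assumption $q_0\geq p_0$ in \eqref{7_ASS_2_DEG_PSI}, \eqref{7_ASS_4_DEG_MOB} guarantees that $m(\varphi)\psi''(\varphi)\leq C\bar m(\varphi)F(\varphi)(1+\varphi)^{q_0-p_0}$ remains bounded as $\varphi\to -1$, so $m_\tau(\varphi_\tau)\psi_\tau''(\varphi_\tau)\grad\varphi_\tau$ is uniformly bounded in $L^2(\L^2)$. Combined with $\norm{\mathbf{J}_\tau}_{L^2(\L^2)}^2\leq \norm{m_\tau(\varphi_\tau)}_{L^\infty(Q)}\norm{\sqrt{m_\tau(\varphi_\tau)}\grad\mu_\tau}_{L^2(\L^2)}^2$, this yields a weak limit $\mathbf{J}_\tau\rightharpoonup\mathbf{J}$ in $L^2(\L^2)$, and identification via \eqref{7_WF_DEGMOB_5} then follows from the strong convergence $\varphi_\tau\to\varphi$ in $C^0(L^2)\cap L^2(H^2)$ provided by Aubin--Lions, using the bound $\del_t\varphi_\tau\in L^2((H^1)^*)$ extracted from the $\varphi$-equation.

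The remaining passages to the limit are standard. Strong convergence of $\varphi_\tau$ and $\sigma_\tau$ in $L^2(L^2)$, together with the weak convergence of $\v_\tau$ in $L^2(\H^1)$ and the growth hypotheses on $f$, $g$, $h$, enable passage in the nonlinearities $\varphi\v$, $\sigma\v$, $(\mu+\chi_\varphi\sigma)\grad\varphi$, $gh$ and $fh$, yielding \eqref{7_WF_DEGMOB_1}, \eqref{7_WF_DEGMOB_2} and \eqref{7_WF_DEGMOB_4}; the $L^\infty(L^6)$ bound on $\sigma$ is obtained by the classical testing of the $\sigma$-equation by $\sigma^5$. Finally, the pressure $p\in L^{4/3}(L_0^2)$ is reconstructed by a de Rham-type argument on the divergence-free test space $\mathbf{V}$; the exponent $4/3$ reflects the integrability of $-\epsilon\,\divergence(\grad\varphi\otimes\grad\varphi)$ in $L^{4/3}(\mathbf{V}^*)$ once the inertial regularization has been removed.
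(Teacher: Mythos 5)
Your overall architecture (regularize mobility and potential, invoke Lemma~\ref{7_Theorem_non_deg_mobility_approx}, add an entropy estimate obtained by testing with $\Phi'$, and give the flux a weak meaning through \eqref{7_WF_DEGMOB_5} using $q_0\ge p_0$) is the paper's, but there is a genuine gap at the foundation: the claim that the energy estimate of Lemma~\ref{7_Theorem_non_deg_mobility_approx} ``survives'' uniformly in the degeneracy parameter. In that lemma the source term $\inn{g(\varphi,\sigma)h(\varphi)}{\mu+\chi_{\varphi}\sigma+\varphi}$ is controlled by bounding $\norml{2}{\mu}$ via Poincar\'e's inequality and the mean-value bound \eqref{apriori_approx_3}, and the resulting $\normL{2}{\grad\mu}$ contribution is absorbed into the dissipation using the \emph{lower} bound $m_0$ of the mobility. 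For $m_\tau=m+\tau$ this lower bound is $\tau$, so the constants blow up as $\tau\to0$: with only $\sqrt{m_\tau(\varphi_\tau)}\grad\mu_\tau$ in $L^2(\L^2)$ there is no uniform control of $\mu_\tau$ in any Lebesgue space, and $\inn{g h}{\mu_\tau}$ cannot be estimated as you assert. This is exactly the non-standard difficulty created by the source terms; the paper resolves it (proof of Lemma~\ref{7_LEM_APRI_DEG}, term $I_4$ in \eqref{7_Proof_deg_mob_eq_1}) by replacing $\mu_\delta+\chi_{\varphi}\sigma_\delta$ in the source term by $-\epsilon\laplace\varphi_\delta+\epsilon^{-1}\psi_\delta'(\varphi_\delta)$ through the equation, using the cancellation $|h(\psi_\delta^1)'|\le C(1+|s|)$ of Lemma~\ref{7_LEM_PROD_PSI_h} (the same mechanism you invoke only for $h\Phi_\tau'$), and absorbing the remaining $\gamma\norml{2}{\laplace\varphi_\delta}^2$ into the $\epsilon\norml{2}{\laplace\varphi_\delta}^2$ dissipation produced by the entropy estimate; i.e.\ the energy and entropy estimates must be added and closed together with a suitable choice of $\gamma$ and $D$. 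Without this modification none of the uniform bounds your later steps rely on ($\sqrt{m_\tau}\grad\mu_\tau$ and $\mathbf{J}_\tau$ in $L^2(\L^2)$, $\varphi_\tau$ in $L^2(H^2)$, $\sigma_\tau$ in $L^2(H^1)$) are available.

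Two further points. Your justification of c) is incorrect as stated: for $q_0<2$ (hence $p_0<2$) the entropy $\Phi$ is \emph{bounded} at $-1$ (see the remark following the theorem), so an $L^\infty(L^1)$ bound on $\Phi_\tau(\varphi_\tau)$ does not by itself force $\varphi\ge-1$. What works, and what the paper proves in \eqref{7_APRI_DMOB_2}, is quantitative: below $-1$ the regularized entropy grows quadratically with a coefficient that blows up as the regularization vanishes (for your $m_\tau=m+\tau$ one gets $\Phi_\tau(z)\ge \tfrac{1}{2\tau}(z+1)^2$ for $z\le-1$), so the entropy bound gives $\esssup_t\intO(-\varphi_\tau-1)_+^2\dx\le C\tau$ and hence $\varphi\ge-1$ in the limit. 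Finally, Aubin--Lions with $\varphi_\tau$ bounded in $L^2(H^2)$ and $\del_t\varphi_\tau$ in $L^2((H^1)^*)$ yields strong convergence only in $L^2(H^s)$ for $s<2$, not in $L^2(H^2)$; this is harmless for \eqref{7_WF_DEGMOB_5} (weak convergence of $\laplace\varphi_\tau$ together with strong convergence of $\grad\varphi_\tau$ suffices), but the passage to the limit in $\divergence(m_\tau(\varphi_\tau)\boldsymbol{\eta})$ also has to cope with the possible discontinuities of $m'$ at $\pm1$, which the paper addresses via the arguments of \cite{GarckeElliot}.
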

	\begin{remark}
		In the case $q_0<2$ (and therefore $p_0<2$), the assumption
		\begin{equation*}
		\inn{\psi(\varphi_0)+\Phi(\varphi_0)}{1}\leq C
		\end{equation*}
		imposes no restriction on the initial data, since $\psi(\cdot)$ and $\Phi(\cdot)$ are bounded in $-1$.
	\end{remark}
	\subsection{Approximation scheme}
	In the following let $\delta\in (0,1]$. We introduce a positive mobility $m_{\delta}$ by
	\begin{equation*}
	m_{\delta}(\varphi)\coloneqq \begin{cases}
	m(-1+\delta) & \text{for } \varphi\leq -1+\delta,\\
	m(\varphi) &\text{for } \varphi> -1+\delta, 
	\end{cases}
	\end{equation*}
	and we define $\Phi_{\delta}$ such that $\Phi_{\delta}''(\varphi)=\frac{1}{m_{\delta}(\varphi)}$ and $\Phi_{\delta}'(0)=\Phi_{\delta}(0)=0$. In particular, we have $\Phi_{\delta}(\varphi)=\Phi(\varphi)$ for $\varphi\geq -1+\delta$.
	The modified potential $\psi_{\delta}\colon \R\to\R$ is defined by $\psi_{\delta}\coloneqq \psi_{\delta}^1 + \psi^2$ where
	\begin{equation*}
	\left(\psi_{\delta}^1\right)''(\varphi)\coloneqq \begin{cases}
	\left(\psi^1\right)''(-1+\delta) & \text{for } \varphi\leq -1+\delta,\\
	\left(\psi^1\right)''(\varphi)& \text{for } \varphi>-1+\delta, 
	\end{cases}
	\end{equation*}
	and $\psi_{\delta}^1(0)=\psi^1(0)$, $\left(\psi_{\delta}^1\right)'(0)=\left(\psi^1\right)'(0)$. As for $\Phi$ we get $\psi_{\delta}(\varphi)=\psi(\varphi)$ if $\varphi\geq -1+\delta$. Furthermore, we extend $\psi^2$ to a function on all $\R$ such that $\norm{\psi^2}_{C^2(\R)}\leq C$.\\ [1ex]
	With these choices for $m_{\delta}$ and $\psi_{\delta}$, by Lemma \ref{7_Theorem_non_deg_mobility_approx} there exists a weak solution (which will be denoted by $(\varphi_{\delta},\mu_{\delta}, \sigma_{\delta}, \v _{\delta}, p_{\delta}))$ of \eqref{7_state_eq_1a}, \eqref{7_state_eq_1b_approx}, \eqref{7_state_eq_1c}-\eqref{7_state_eq_1e} and \eqref{7_boundary_cond_1a}-\eqref{7_boundary_cond_1b}, \eqref{7_boundary_cond_1c_approx} with $m(\cdot)$ and $\psi(\cdot)$ replaced by $m_{\delta}(\cdot)$ and $\psi_{\delta}(\cdot)$.
	
	\begin{remark}
		Due to \eqref{7_weak_formulation_eq_approx_1b}, we see that
		\begin{equation*}
		(\mu_\delta+\chi_{\varphi}\sigma_{\delta})\grad\varphi_\delta = \grad\left(\tfrac{\epsilon}{2}|\grad\varphi_\delta|^2 + \epsilon^{-1}\psi_{\delta}(\varphi_\delta) \right)-\divergence(\epsilon\grad\varphi_\delta\otimes \grad\varphi_{\delta}).
		\end{equation*}
		Therefore, \eqref{7_weak_formulation_eq_approx_1d} is equivalent to
		\begin{equation}
		\label{7_WF_DEGMOB_PROOF_4a} \delta\inn{\delt\v_{\delta}}{\mathbf{u}} + 2\eta\inn{\D\v_{\delta}}{\D\mathbf{u}} + \nu\inn{\v_{\delta}}{\mathbf{u}}=  \epsilon\inn{\grad\varphi_{\delta}\otimes \grad\varphi_{\delta}}{\grad\mathbf{u}}
		\end{equation}
		for a.\,e.\ $t\in(0,T)$ and for all $\mathbf{u} \in \mathbf{V}$.
	\end{remark}
	\subsubsection{Some preliminary results}
	The following lemma will be important to estimate the source terms independently of $\delta\in (0,1]$.
	\begin{lemma}\label{7_LEM_PROD_PSI_h} For all $s\in\R$ it holds that
		\begin{equation*}
		|h(s)(\psi_{\delta}^1)'(s)|+|h(s)\Phi_{\delta}'(s)|\leq C(1+|s|)
		\end{equation*}
		with a constant $C$ independent of $\delta\in (0,1]$.
	\end{lemma}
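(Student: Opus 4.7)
The plan is to split the estimate on the product $h(s)(\psi^1_\delta)'(s)$ (and analogously $h(s)\Phi'_\delta(s)$) into four ranges of $s$, exploiting that the (possible) blow up of $(\psi^1_\delta)''$ and $1/m_\delta$ as $\varphi\to -1$ is exactly compensated by the linear vanishing of $h$ at $-1$, together with the bounds $p_0,q_0\in[1,2]$.

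First, for $s\leq -1$ we have $h(s)=0$ by Assumption \ref{7_ASS}\textnormal{(iv)}, so both products are trivially zero. Second, for $s\geq 1$ the function $h$ is bounded by $h_\infty$, while on this range the $\max$-factor in \eqref{7_ASS_2_DEG_PSI} and in \eqref{7_ASS_4_DEG_MOB} equals $1$, so $(\psi^1_\delta)''(s)=F(s)$ and $1/m_\delta(s)=1/\bar m(s)$ are bounded by absolute constants. Integrating from $0$ then yields $|(\psi^1_\delta)'(s)|+|\Phi'_\delta(s)|\leq C(1+|s|)$, and multiplying by the bounded $h(s)$ gives the claim on this range.

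The interesting cases are $s\in[-1+\delta,1]$ and $s\in[-1,-1+\delta]$. For $s\in[-1+\delta,0]$, using $(\psi^1)'(0)=0$ and the structure \eqref{7_ASS_2_DEG_PSI},
\[
|(\psi^1_\delta)'(s)|=\Bigl|\int_0^s (\psi^1)''(r)\,\d r\Bigr|\leq C\int_s^0 (1+r)^{-p_0}\,\d r\leq C(1+s)^{1-p_0}
\]
(with a logarithmic replacement if $p_0=1$). Combined with $h(s)\leq C_7(1+s)$ this gives $h(s)|(\psi^1_\delta)'(s)|\leq C(1+s)^{2-p_0}\leq C$ since $p_0\leq 2$. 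The identical calculation, using $q_0$ in place of $p_0$ and $q_0\leq 2$, handles $\Phi'_\delta$. The range $s\in[0,1]$ is handled analogously (and is in fact easier since $(\psi^1)''$, $1/m$ stay bounded away from the singularity).

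The main point is the regularization range $s\in[-1,-1+\delta]$, where one must verify that the $\delta$-dependence cancels. Here $(\psi^1_\delta)''$ is constant equal to $(\psi^1)''(-1+\delta)\leq C\delta^{-p_0}$, so
\[
|(\psi^1_\delta)'(s)|\leq |(\psi^1_\delta)'(-1+\delta)|+(\psi^1)''(-1+\delta)\cdot \delta\leq C\delta^{1-p_0},
\]
where the first term is controlled by applying the previous step at the matching point $s=-1+\delta$. Since $h(s)\leq C_7(1+s)\leq C\delta$ on this interval, we obtain $h(s)|(\psi^1_\delta)'(s)|\leq C\delta^{2-p_0}\leq C$, uniformly in $\delta\in(0,1]$, and the same argument with $q_0$ replacing $p_0$ bounds $h(s)|\Phi'_\delta(s)|$. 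Collecting the four cases yields the stated uniform estimate. The only subtle step is this last one: one must identify the precise powers of $\delta$ produced by the regularization and check that the exponents $2-p_0$ and $2-q_0$ are non-negative, which is exactly the role of the assumption $p_0,q_0\leq 2$ in \eqref{7_ASS_2_DEG_PSI}--\eqref{7_ASS_4_DEG_MOB}.
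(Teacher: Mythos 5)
Your proof is correct and takes essentially the same route as the paper's: a case split in $s$, integration of the (regularized) second derivative from $0$ where $(\psi_\delta^1)'$ and $\Phi_\delta'$ vanish, and compensation of the singular factor $(1+s)^{-p_0}$ (respectively $\delta^{-p_0}$ on the regularization interval $[-1,-1+\delta]$) by the bound $h(s)\leq C_7(1+s)\leq C\delta$ there. The only difference is cosmetic: the paper reduces to the worst case $p_0=2$, while you carry general $p_0,q_0\in[1,2]$ (including the logarithmic borderline $p_0=1$), which changes nothing essential.
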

	\begin{proof}
		Let $\delta\in (0,1]$ be arbitrary. In the following we will frequently use the assumptions on $h(\cdot),\,F(\cdot)$ and $(\psi_{\delta}^1)'(0)= \Phi_{\delta}'(0)=0$. We consider only the case $p_0 = 2$, which corresponds to the highest degree of singularity of $(\psi_{\delta}^1)''$ and $(\Phi_{\delta}^1)''$. By $C$ we denote a generic constant independent of $\delta\in (0,1]$. We distinguish different cases. 
		\begin{enumerate}
			\item[(i)] For $s \leq -1$ we have due to \eqref{7_Assumption_on_fg} that $h(s)(\psi_{\delta}^1)'(s) = 0$.
			\item[(ii)] If $s\in (-1,-1+\delta)$, it holds
			\begin{align*}
			|h(s)(\psi_{\delta}^1)'(s )| & = \left| h(s)\left(\int_{s}^{-1+\delta}4F(-1+\delta)\delta^{-2}\dt + \int_{-1+\delta}^{0 }4F(t)(1+t)^{-2}\dt \right)\right|\\
			&\leq  4F_0 h(s)\left( -1 + \delta^{-1} + \delta^{-2}|s -(-1+\delta)| \right)\\
			&\leq C,
			\end{align*}
			where we used that $|s -(-1+\delta)|\delta^{-2}\leq \delta^{-1}$ and $0\leq h(s)\leq C_7\delta$.
			\item[(iii)] In the case $s \in (-1+\delta,0)$, an easy computation shows
			\begin{equation*}
			|h(s)(\psi_{\delta}^1)'(s)|	\leq h(s)\left|\int_{s}^{0 }4F_0(1+t)^{-2}\dt\right| = 4F_0h(s)\left(-1+(1+s)^{-1}\right).
			\end{equation*}
			Since $\tfrac{h(s)}{1+s}\leq C_7$ for $s\in [-1,1]$, this implies that $|h(s)(\psi_{\delta}^1)'(s)|\leq C$.
			\item[(iv)] For $s\geq 0$, the assumptions on $h(\cdot)$ and $\psi_{\delta}^1(\cdot)$ guarantee that $|h(s)(\psi_{\delta}^1)'(s)|\leq C(1+|s|)$.
		\end{enumerate}
		In summary, this shows that 
		\begin{equation*}
		|h(s)(\psi_{\delta}^1)'(s)|\leq C(1+|s|)\quad\forall\, s\in \R.
		\end{equation*}
		Using the assumptions on $\bar{m}(\cdot)$, with exactly the same arguments it follows that $|h(s)\Phi_{\delta}'(s)|\leq C(1+|s|)$ for all $s\in\R$, which completes the proof.
	\end{proof}
	The following lemma summarises uniform estimates for the approximating solutions.
	\begin{lemma}[a priori estimates] \label{7_LEM_APRI_DEG} There exists a $\delta_0$ such that for all $0<\delta\leq\delta_0$ the following estimates hold with a constant $C$ independent of $\delta$:
		\begin{subequations}
			\begin{align}
			\nonumber&\esssup_{0\leq t\leq T}\left(\normh{1}{\varphi_{\delta}(t)}^2 +  \norml{2}{\sigma_{\delta}(t)}^2+ \norml{1}{\psi_{\delta}(\varphi_{\delta}(t))} + \norml{1}{\Phi_{\delta}(\varphi_{\delta}(t))} + \delta\normL{2}{\v_{\delta}(t)}^2\right)\\
			\nonumber&\quad +\intT \normL{2}{\sqrt{m_{\delta}(\varphi_{\delta})}\grad\mu_{\delta}}^2 + \normL{2}{\grad\sigma_{\delta}}^2+ \norml{2}{\laplace\varphi_{\delta}}^2 + \normL{2}{\sqrt{(\psi_{\delta}^1)''(\varphi_{\delta})}\grad\varphi_{\delta}}^2\dt\\
			\label{7_APRI_DMOB_1}&\quad +\intT\normH{1}{\v_{\delta}}^2\dt \leq C,\\
			\label{7_APRI_DMOB_2}&\esssup_{0\leq t\leq T}\intO (-\varphi_{\delta}(t)-1)_{+}^2\dx\leq C\delta,\\
			\label{7_APRI_DMOB_3}&\intT \normL{2}{\mathbf{J}_{\delta}}^2\dt\leq C\text{  where  }\mathbf{J}_{\delta}\coloneqq m_{\delta}(\varphi_{\delta})\grad\mu_{\delta}.
			\end{align}	
		\end{subequations}
	\end{lemma}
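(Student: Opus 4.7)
My approach is to combine an energy identity, in the spirit of Lemma \ref{7_Theorem_non_deg_mobility_approx}, with an entropy identity built from $\Phi_\delta$, both tuned so that every bound produced on the left-hand side is $\delta$-independent, and then to apply Gronwall's inequality. The singular factors $\psi'_\delta(\varphi_\delta)$ and $\Phi'_\delta(\varphi_\delta)$ that show up after substitution of the $\mu$-equation are precisely tamed by Lemma \ref{7_LEM_PROD_PSI_h}. The estimates \eqref{7_APRI_DMOB_2} and \eqref{7_APRI_DMOB_3} will drop out at the end as consequences of the entropy bound and of $m_\delta\le M_0$, respectively.

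For the energy identity I would test the weak $\varphi$-equation by $\mu_\delta+\chi_\varphi\sigma_\delta$, the momentum equation \eqref{7_WF_DEGMOB_PROOF_4a} by $\v_\delta$, and the $\sigma$-equation by $D\sigma_\delta$ for $D$ sufficiently large. The reformulation recalled just before Lemma \ref{7_LEM_PROD_PSI_h} gives the cancellation $\inn{\grad\varphi_\delta\cdot\v_\delta}{\mu_\delta+\chi_\varphi\sigma_\delta}=\epsilon\inn{\grad\varphi_\delta\otimes\grad\varphi_\delta}{\grad\v_\delta}$, which removes the convective/capillary coupling between $\v_\delta$ and $\varphi_\delta$. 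After summing the three tests, the cross terms $\chi_\varphi\inn{m_\delta\grad\mu_\delta}{\grad\sigma_\delta}$ and $D\chi_\varphi\inn{\grad\varphi_\delta}{\grad\sigma_\delta}$ are absorbed into the dissipation via Young's inequality (fixing $D$ large). The delicate term is $\inn{g(\varphi_\delta,\sigma_\delta)h(\varphi_\delta)}{\mu_\delta}$, which I would handle by substituting the $\mu$-equation to produce $-\epsilon\inn{gh}{\laplace\varphi_\delta}+\epsilon^{-1}\inn{g}{h\psi'_\delta(\varphi_\delta)}-\chi_\varphi\inn{gh}{\sigma_\delta}$; the middle piece is then uniformly bounded by $C(1+\norml{2}{\varphi_\delta}^2+\norml{2}{\sigma_\delta}^2)$ thanks to Lemma \ref{7_LEM_PROD_PSI_h} and the growth of $g$, while the first piece requires a $\norml{2}{\laplace\varphi_\delta}^2$ dissipation on the left.

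That dissipation is produced by the entropy identity, obtained by testing the $\varphi$-equation by $\Phi'_\delta(\varphi_\delta)$. The identity $m_\delta\Phi''_\delta\equiv 1$ annihilates the mobility, yielding $\inn{m_\delta\grad\mu_\delta}{\grad\Phi'_\delta(\varphi_\delta)}=\inn{\grad\mu_\delta}{\grad\varphi_\delta}$, and the convection term vanishes via $\inn{\v_\delta}{\grad\Phi_\delta(\varphi_\delta)}=0$. Substituting the $\mu$-equation, integrating by parts using $\grad\varphi_\delta\cdot\n=0$, and splitting $\psi''_\delta=(\psi^1_\delta)''+(\psi^2)''$ (the first convex, the second uniformly bounded) gives
\begin{equation*}
\tfrac{\mathrm d}{\mathrm dt}\norml{1}{\Phi_\delta(\varphi_\delta)}+\epsilon\norml{2}{\laplace\varphi_\delta}^2+\epsilon^{-1}\normL{2}{\sqrt{(\psi^1_\delta)''(\varphi_\delta)}\grad\varphi_\delta}^2\le\inn{gh}{\Phi'_\delta(\varphi_\delta)}+\text{l.o.t.},
\end{equation*}
where the right-hand side is uniformly controlled by Lemma \ref{7_LEM_PROD_PSI_h} applied to $h\Phi'_\delta$. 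Summing the energy and entropy identities, absorbing a fraction of $\epsilon\norml{2}{\laplace\varphi_\delta}^2$ to swallow $\epsilon\inn{gh}{\laplace\varphi_\delta}$, and invoking Gronwall delivers \eqref{7_APRI_DMOB_1}.

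For \eqref{7_APRI_DMOB_2}, the construction of $m_\delta$ together with \eqref{7_ASS_4_DEG_MOB} gives $\Phi''_\delta\ge c\,\delta^{-q_0}$ on $(-\infty,-1+\delta]$ with a $\delta$-independent $c>0$; a second-order Taylor expansion then yields $\Phi_\delta(\varphi)\ge c\,\delta^{-q_0}(-1-\varphi)^2$ for $\varphi\le -1$, and since $q_0\ge 1$ and $\delta\le 1$ we have $\delta^{-q_0}\ge\delta^{-1}$, so \eqref{7_APRI_DMOB_1} yields \eqref{7_APRI_DMOB_2}. The bound \eqref{7_APRI_DMOB_3} is immediate from $\normL{2}{\mathbf{J}_\delta}^2=\int_\Omega m_\delta^2|\grad\mu_\delta|^2\dx\le M_0\normL{2}{\sqrt{m_\delta}\grad\mu_\delta}^2$ combined with \eqref{7_APRI_DMOB_1}. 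The main analytical obstacle throughout is the $\delta$-uniform treatment of $\inn{gh}{\mu_\delta}$: without Lemma \ref{7_LEM_PROD_PSI_h} the singular behaviour of $\psi'_\delta$ near $\varphi=-1$ would destroy all bounds, and without the entropy identity one would lack the $\laplace\varphi_\delta$ dissipation needed to absorb the residual $-\epsilon\inn{gh}{\laplace\varphi_\delta}$ term.
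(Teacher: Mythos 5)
Your overall strategy coincides with the paper's proof: the same energy testing as in Lemma \ref{7_Theorem_non_deg_mobility_approx} (with the capillary/convection cancellation through \eqref{7_WF_DEGMOB_PROOF_4a}), the same entropy identity obtained by testing with $\Phi_\delta'(\varphi_\delta)$ to generate the $\laplace\varphi_\delta$ dissipation, the same use of Lemma \ref{7_LEM_PROD_PSI_h} to tame $h\psi_\delta'$ and $h\Phi_\delta'$, summation plus Gronwall for \eqref{7_APRI_DMOB_1}, the convexity/Taylor argument for \eqref{7_APRI_DMOB_2}, and $m_\delta\le M_0$ for \eqref{7_APRI_DMOB_3}. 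Two steps are missing, though. First, your Gronwall right-hand side contains $\|\varphi_\delta\|_{L^2}^2$, but since you test the $\varphi$-equation only with $\mu_\delta+\chi_\varphi\sigma_\delta$, nothing in your energy controls $\|\varphi_\delta\|_{L^2}^2$: in the degenerate framework $\psi_\delta$ need not grow quadratically (take $F\equiv 0$ and recall $\psi^2$ is bounded after the $C^2(\R)$ extension), so $\|\psi_\delta(\varphi_\delta)\|_{L^1}$ does not supply it. The paper fixes this by testing with $\mu_\delta+\chi_\varphi\sigma_\delta+\varphi_\delta$, which puts $\tfrac12\|\varphi_\delta\|_{L^2}^2$ into the energy at no cost (the extra convection term vanishes and $\inn{m_\delta(\varphi_\delta)\grad\mu_\delta}{\grad\varphi_\delta}$ is absorbed by Young). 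Alternatively your own entropy term can be used, since $\Phi_\delta''=1/m_\delta\ge 1/M_0$ together with $\Phi_\delta(0)=\Phi_\delta'(0)=0$ gives $\Phi_\delta(s)\ge s^2/(2M_0)$; but without one of these observations neither the Gronwall closure nor the claimed $\esssup_t\|\varphi_\delta(t)\|_{H^1}^2$ bound is justified.

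Second, you never bound the initial value of the summed energy--entropy functional uniformly in $\delta$, and this is exactly where the threshold $\delta_0$ in the statement comes from: one needs $\|\sigma_{0,\delta}\|_{L^2}\le C\|\sigma_0\|_{L^2}$ from \eqref{7_boundary_cond_1d_approx}, and $\psi_\delta(\varphi_0)\le\psi(\varphi_0)$, $\Phi_\delta(\varphi_0)\le\Phi(\varphi_0)$ for $\delta$ sufficiently small, so that the hypothesis $\inn{\psi(\varphi_0)+\Phi(\varphi_0)}{1}\le C$ of Theorem \ref{7_THM_DEG_MOB} yields a $\delta$-independent bound at $t=0$ (cf.\ \eqref{7_Proof_deg_mob_eq_15}); since $\Phi$ may blow up at $-1$, this verification cannot be skipped. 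Finally, to pass from the dissipation term $2\eta\|\D\v_\delta\|_{\L^2}^2$ to the $\intT\normH{1}{\v_\delta}^2\dt$ term in \eqref{7_APRI_DMOB_1} you should invoke Korn's inequality, as the paper does. With these additions your argument reproduces the paper's proof.
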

	\begin{proof}
		In the following we denote by $C$ a generic positive constant independent of $\delta\in (0,1]$, which may change its value even within one line. Furthermore, we will frequently use H\"older's and Young's inequalities. \\ [1ex]
		\textbf{Step 1:} First of all, multiplying $\eqref{7_boundary_cond_1d_approx}_1$ with $\sigma_{0,\delta}$, integrating over $\Omega$ and by parts and using $\eqref{7_boundary_cond_1d_approx}_2$, we obtain
		\begin{equation}
		\label{7_Proof_deg_mob_eq_2}\norml{2}{\sigma_{0,\delta}}\leq C\norml{2}{\sigma_0}.
		\end{equation}
		Using that $\psi_{\delta}(\cdot)$ is a quadratic perturbation of a convex functional and invoking \cite[Lemma 4.1]{RoccaSchimperna}, for almost every $t\in(0,T)$ it holds
		\begin{align*}
		&	 \inndual{\delt\varphi_{\delta}}{-\epsilon\laplace\varphi_{\delta}+\epsilon^{-1}\psi_{\delta}'(\varphi_{\delta}) + \varphi_{\delta}}_{H^1} \nonumber \\ & \qquad = \tfrac{\d}{\dt}\left( \tfrac{1}{2}\norml{2}{\varphi_{\delta}}^2+\tfrac{\epsilon}{2}\normL{2}{\grad\varphi_{\delta}}^2 + \epsilon^{-1}\norml{1}{\psi_{\delta}(\varphi_{\delta})}\right).
		\end{align*}
		Then, with exactly the same arguments as in the proof of Lemma \ref{7_Theorem_non_deg_mobility_approx}, we get
		\begin{align}
		\nonumber &\tfrac{\d}{\dt}\left(\tfrac{1}{2}\norml{2}{\varphi_{\delta}}^2+\tfrac{\epsilon}{2}\normL{2}{\grad\varphi_{\delta}}^2+\epsilon^{-1}\norml{1}{\psi_{\delta}(\varphi_{\delta})}+\tfrac{D}{2}\norml{2}{\sigma_{\delta}}^2 + \tfrac{\delta}{2}\normL{2}{\v_{\delta}}^2\right) \\
		\nonumber	&\qquad + \normL{2}{\sqrt{m_{\delta}(\varphi_{\delta})}\grad\mu_{\delta}}^2+D\chi_{\sigma}\normL{2}{\grad \sigma_{\delta}}^2+ 2\eta\normL{2}{\D\v_{\delta}}^2+\nu\normL{2}{\v_{\delta}}^2\\
		\nonumber &\quad=   -\inn{m_{\delta}(\varphi_{\delta})\grad\mu_{\delta}}{\grad(\chi_{\varphi}\sigma_{\delta}+\varphi_{\delta})}+\inn{h(\varphi_{\delta})}{g(\varphi_{\delta},\sigma_{\delta})\varphi_{\delta}- Df(\varphi_{\delta},\sigma_{\delta})\sigma_{\delta}} \\
		\nonumber&\qquad  +  D\chi_{\varphi}\inn{\grad\varphi_{\delta}}{\grad\sigma_{\delta}}+   \inn{g(\varphi_{\delta},\sigma_{\delta})h(\varphi_{\delta})}{-\epsilon\laplace\varphi_{\delta} + \epsilon^{-1}\psi_{\delta}'(\varphi_{\delta})} \\
		\label{7_Proof_deg_mob_eq_1}&\quad\eqqcolon I_1+I_2+I_3+I_4
		\end{align}
		for $D>0$ to be specified and for almost every $t\in (0,T]$, where we used \eqref{7_weak_formulation_eq_approx_1c} for $\mu_{\delta}+\chi_{\varphi}\sigma_{\delta}$ and \eqref{apriori_approx_2}.
		The assumptions on $\bar{m}(\cdot)$ guarantee that
		\begin{align*}
		\nonumber\left|I_1\right|\leq \tfrac{1}{4} \normL{2}{\sqrt{m_{\delta}(\varphi_{\delta})}\grad\mu_{\delta}}^2 + 2M_0\left(\chi_{\varphi}^2\normL{2}{\grad\sigma_{\delta}}^2+\normL{2}{\grad\varphi_{\delta}}^2\right).
		\end{align*}
		Furthermore, it holds that
		\begin{equation*}
		\left|I_3\right|\leq \frac{D\chi_{\sigma}}{2}\normL{2}{\grad\sigma_{\delta}}^2 + \frac{D\chi_{\varphi}^2}{2\chi_{\sigma}}\normL{2}{\grad\varphi_{\delta}}^2.
		\end{equation*}
		With similar arguments as in the proof of Lemma \ref{7_Theorem_non_deg_mobility_approx} we deduce
		\begin{align*}
		\left|I_2\right|\leq C_D\left(1+\norml{2}{\varphi_{\delta}}^2+\norml{2}{\sigma_{\delta}}^2\right).
		\end{align*}
		Finally,  due to the assumptions on $\psi^2(\cdot)$ and using Lemma \ref{7_LEM_PROD_PSI_h} for $\psi_{\delta}^1$ along with \eqref{7_Assumption_on_fg}, we obtain
		\begin{equation*}
		|I_4|\leq \gamma \norml{2}{\laplace\varphi_{\delta}}^2+ C_{\gamma}\left(1+\norml{2}{\varphi_{\delta}}^2+\norml{2}{\sigma_{\delta}}^2\right)
		\end{equation*}
		with $\gamma>0$ to be chosen later.
		Employing the last four inequalities in \eqref{7_Proof_deg_mob_eq_1} and choosing $D=\max\left(1,(1+4M_0\chi_{\varphi}^2)\chi_{\sigma}^{-1}\right)$ gives
		
		\begin{align}
		\nonumber &\tfrac{\d}{\dt}\left(\tfrac{1}{2}\norml{2}{\varphi_{\delta}}^2+\tfrac{\epsilon}{2}\normL{2}{\grad\varphi_{\delta}}^2+\epsilon^{-1}\norml{1}{\psi_{\delta}(\varphi_{\delta})}+\tfrac{1}{2}\norml{2}{\sigma_{\delta}}^2 + \tfrac{\delta}{2}\normL{2}{\v_{\delta}}^2\right) \\
		\nonumber	&\qquad +\tfrac{1}{2}\normL{2}{\sqrt{m_{\delta}(\varphi_{\delta})}\grad\mu_{\delta}}^2+\tfrac{1}{2}\normL{2}{\grad \sigma_{\delta}}^2+ 2\eta\normL{2}{\D\v_{\delta}}^2+\nu\normL{2}{\v_{\delta}}^2\\
		\label{7_Proof_deg_mob_eq_12} &\quad\leq C_{\gamma}\left(1+\norml{2}{\varphi_{\delta}}^2 + \normL{2}{\grad\varphi_{\delta}}^2+ \norml{2}{\sigma_{\delta}}^2\right) + \gamma\norml{2}{\laplace\varphi_{\delta}}^2.
		\end{align}
		\textbf{Step 2:}
		In the following we aim to derive an estimate for $\laplace\varphi_{\delta}$ in order to absorb the last term on the right hand side of \eqref{7_Proof_deg_mob_eq_12}. First, we note that integration by parts and $\v_{\delta}\in L^2(\mathbf{V})$ implies
		\begin{equation*}
		\inn{\grad\varphi_{\delta}\cdot \v_{\delta}}{\Phi_{\delta}'(\varphi_{\delta})} =  \inn{\grad \left(\Phi_{\delta}(\varphi_{\delta})\right)}{\v_{\delta}}= 0.
		\end{equation*}
		Consequently, choosing $\Phi_{\delta}'(\varphi_{\delta})\in L^2(H^1)$ as a test function in \eqref{7_weak_formulation_eq_approx_1a}, invoking \cite[Lemma 4.1]{RoccaSchimperna} and the identity $\Phi_{\delta}''(\varphi_{\delta}) = \frac{1}{m_{\delta}(\varphi_{\delta})}$, with similar arguments as in \cite{GarckeElliot} we obtain
		\begin{align*}
		&\tfrac{\d}{\dt} \norml{1}{\Phi_{\delta}(\varphi_{\delta})} + \epsilon\norml{2}{\laplace\varphi_{\delta}}^2 + \epsilon^{-1}\normL{2}{\sqrt{(\psi_{\delta}^1)''(\varphi_{\delta}})\grad\varphi_{\delta}}^2  \\
		&\quad=  \chi_{\varphi}\inn{\grad\varphi_{\delta}}{\grad\sigma_{\delta}} - \epsilon^{-1}\normL{2}{\sqrt{(\psi^2)''(\varphi_{\delta})}\grad\varphi_{\delta}}^2  + \inn{g(\varphi_{\delta},\sigma_{\delta})h(\varphi_{\delta})}{\Phi_{\delta}'(\varphi_{\delta})}
		\end{align*} 
		for almost every $t\in (0,T)$. 
		Using the assumptions on $\psi^2(\cdot)$, \eqref{7_Assumption_on_fg} and Lemma \ref{7_LEM_PROD_PSI_h}, with similar arguments as above we can bound the right hand side of this identity to obtain
		\begin{align}
		\nonumber&\tfrac{\d}{\dt} \norml{1}{\Phi_{\delta}(\varphi_{\delta})} + \epsilon\norml{2}{\laplace\varphi_{\delta}}^2 + \epsilon^{-1}\normL{2}{\sqrt{(\psi_{\delta}^1)''(\varphi_{\delta}})\grad\varphi_{\delta}}^2\\
		\label{7_Proof_deg_mob_eq_14}&\quad\leq C\left(1+\norml{2}{\varphi_{\delta}}^2+\normL{2}{\grad\varphi_{\delta}}^2 + \norml{2}{\sigma_{\delta}}^2\right)+\tfrac{1}{4}\normL{2}{\grad\sigma_{\delta}}^2
		\end{align}
		for almost every $t\in (0,T)$. 
		Next, we notice that $\Phi_{\delta}(u)\leq \Phi(u)$, $\psi_{\delta}^1(u)\leq \psi^1(u)$ for $\delta$ sufficiently small. Using \eqref{7_Proof_deg_mob_eq_2} and the Sobolev embedding $H^1\subset L^6$ along with the assumptions on $\varphi_0$ and $\sigma_0$, we know that
		\begin{equation}
		\label{7_Proof_deg_mob_eq_15}\tfrac{1}{2}\norml{2}{\varphi_0}^2+\tfrac{\epsilon}{2}\normL{2}{\grad\varphi_0}^2+\epsilon^{-1}\norml{1}{\psi_{\delta}(\varphi_0)}+ \norml{1}{\Phi_{\delta}(\varphi_0)}+\tfrac{1}{2}\norml{2}{\sigma_{0,\delta}}^2\leq C.
		\end{equation}
		Adding up \eqref{7_Proof_deg_mob_eq_12} and \eqref{7_Proof_deg_mob_eq_14}, choosing $\gamma = \frac{\epsilon}{2}$, integrating in time from $0$ to $t\in (0,T]$ and using \eqref{7_Proof_deg_mob_eq_15} together with Korn's inequality (see, e.\,g., \cite[Sec. 6.3]{Ciarlet}), an application of Gronwall's lemma implies \eqref{7_APRI_DMOB_1}. \\ [1ex]
		\textbf{Step 3:}
		We now prove \eqref{7_APRI_DMOB_2}. First observe that the convexity of $\Phi_{\delta}(\cdot)$ and $\Phi_{\delta}(0)= \Phi_{\delta}'(0)=0$ imply
		\begin{equation*}
		\Phi_{\delta}(-1+\delta)\geq 0,\quad \Phi_{\delta}'(-1+\delta)\leq 0.
		\end{equation*}
		Recalling the assumptions on $\bar{m}(\cdot)$ and using $\delta^{p_0}\leq \delta$, we can follow the arguments in \cite{GarckeElliot} to obtain
		\begin{equation*}
		(-z-1)^2\leq C\delta \Phi_{\delta}(z)\quad\text{for all }z\leq -1\text{ and }\delta<1.
		\end{equation*}
		Employing \eqref{7_APRI_DMOB_1} we conclude
		\begin{equation*}
		\esssup_{0\leq t\leq T}\intO (-\varphi_{\delta}(s)-1)_{\text{+}}^2\dx\leq C\delta\, \esssup_{0\leq t\leq T}\intO \Phi_{\delta}(\varphi_{\delta}(s))\dx\leq C\delta
		\end{equation*}
		which implies \eqref{7_APRI_DMOB_2}. Finally, because of \eqref{7_APRI_DMOB_1}, an easy computation shows that
		\begin{equation*}
		\intT  \normL{2}{m_{\delta}(\varphi_{\delta})\grad\mu_{\delta}}^2\dt\leq C\intT\normL{2}{ \sqrt{m_{\delta}(\varphi_{\delta})}\grad\mu_{\delta}}^2\dt\leq C,
		\end{equation*}
		and the proof is complete.
	\end{proof}
	The following lemma will be applied to pass to the limit in the approximative system \eqref{7_weak_formulation_eq_approx}.
	\begin{lemma}
		\label{7_LEM_CONV_TERMS} Let $\delta\in (0,\delta_0]$ and assume the assumptions of Theorem \ref{7_THM_DEG_MOB} are fulfilled. Then, it holds that
		\begin{align}
		\nonumber&\norm{\varphi_{\delta}}_{H^1((H^1)^*)\cap L^{\infty}(H^1)\cap L^2(H^2)}+\norm{\sigma_{\delta}}_{H^1((H^1)^*)\cap L^{\infty}(L^6)\cap L^2(H^1)}+\norm{\v_{\delta}}_{L^2(\H^1)}\\
		\label{7_LEM_CONV_TERMS_EST}&\quad + \sqrt{\delta}\norm{\v_{\delta}}_{L^{\infty}(\L^2)}+\norm{\divergence(\varphi_{\delta}\v_{\delta})}_{L^2(\L^{\frac{3}{2}})}+\norm{\divergence(\sigma_{\delta}\v _{\delta})}_{L^2((H^1)^*)}\leq C
		\end{align}
		with a positive constant $C$ independent of $\delta\in (0,\delta_0]$. Furthermore, as $\delta\to 0$ we have (at least for a non-relabelled subsequence)
		\begin{subequations}
			\begin{alignat}{4}
			\label{7_LEM_CONV_TERMS_CONVa}\varphi_{\delta}&\to\varphi&&\quad\text{weakly-star in}&&\quad H^1((H^1)^*)\cap L^{\infty}(H^1)\cap L^2(H^2),\\
			\label{7_LEM_CONV_TERMS_CONVb}\sigma_{\delta}&\to\sigma&&\quad\text{weakly-star in}&&\quad H^1((H^1)^*)\cap L^{\infty}(L^6)\cap L^2(H^1),\\
			\label{7_LEM_CONV_TERMS_CONVc}\v _{\delta}&\to\v &&\quad\text{weakly in}&&\quad L^2(\H^1),\\
			\label{7_LEM_CONV_TERMS_CONVd}\divergence(\varphi_{\delta}\v _{\delta})&\to \divergence(\varphi\v) &&\quad\text{weakly in}&&\quad L^2(L^{\frac{3}{2}}),\\
			\label{7_LEM_CONV_TERMS_CONVe}\divergence(\sigma_{\delta}\v_{\delta})&\to  \divergence(\sigma \v) &&\quad\text{weakly in}&&\quad L^2((H^1)^*),\\
			\label{7_LEM_CONV_TERMS_CONVf}\textbf{J}_{\delta}&\to\textbf{J}&&\quad\text{weakly in }&&\quad L^2(\L^2),
			\end{alignat}
			and
			\begin{alignat}{4}
			\label{7_LEM_CONV_TERMS_CONVg}\varphi_{\delta}&\to\varphi&&\quad\text{strongly in }\,C^0([0,T];L^r)\cap L^2(W^{1,r})&\quad\text{ and a.\,e.\ in }Q,\\
			\label{7_LEM_CONV_TERMS_CONVh}\sigma_{\delta}&\to\sigma&&\quad\text{strongly in }\,C^0([0,T];(H^1)^*)\cap L^p(L^r)&\quad\text{ and a.\,e.\ in }Q
			\end{alignat}
			for any $r\in[1,6)$ and $p\in [1,\infty)$.
		\end{subequations}
	\end{lemma}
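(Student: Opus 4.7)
The plan is to combine the a priori bounds from Lemma \ref{7_LEM_APRI_DEG} with a few additional regularity estimates, and then to extract convergent subsequences using Banach--Alaoglu and the Aubin--Lions--Simon lemma. First I would sharpen the bounds on the individual unknowns. The $L^\infty(H^1)$-bound and the $L^2(L^2)$-bound on $\laplace\varphi_\delta$ from \eqref{7_APRI_DMOB_1}, combined with elliptic regularity for the Neumann problem \eqref{7_weak_formulation_eq_approx_1e}, upgrade to a uniform $L^2(H^2)$-bound on $\varphi_\delta$. The time derivative $\delt\varphi_\delta$ is then estimated directly from \eqref{7_WF_DEGMOB_1}, using $\mathbf{J}_\delta\in L^2(\L^2)$ by \eqref{7_APRI_DMOB_3}, $\grad\varphi_\delta\cdot\v_\delta\in L^2(L^{3/2})$ (via $\grad\varphi_\delta\in L^\infty(\L^2)$ and $\v_\delta\in L^2(\L^6)$), and the linear growth from \eqref{7_Assumption_on_fg}.

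The $L^\infty(L^6)$-bound on $\sigma_\delta$ is the one piece not directly available from Lemma \ref{7_LEM_APRI_DEG}. I would obtain it by testing \eqref{7_weak_formulation_eq_approx_1c} with $|\sigma_\delta|^4\sigma_\delta$: the convection term vanishes thanks to $\divergence(\v_\delta)=0$ and $\v_\delta|_{\del\Omega}=\mathbf{0}$, the diffusion yields a non-negative contribution, the $\chi_\varphi\laplace\varphi_\delta$-term is handled by integration by parts together with the $L^2(H^2)$-bound on $\varphi_\delta$, and the source term is controlled by \eqref{7_Assumption_on_fg} and the boundedness of $h$, closing the estimate by Gronwall. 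With this in hand, the bound $\divergence(\sigma_\delta\v_\delta)\in L^2((H^1)^*)$ follows by duality from $\sigma_\delta\v_\delta\in L^2(\L^2)$ (since $\sigma_\delta\in L^\infty(L^6)$ and $\v_\delta\in L^2(\L^3)$), and $\delt\sigma_\delta$ is estimated analogously from \eqref{7_WF_DEGMOB_2}, completing \eqref{7_LEM_CONV_TERMS_EST}.

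Banach--Alaoglu then yields a (non-relabelled) subsequence realising the weak and weak-star limits \eqref{7_LEM_CONV_TERMS_CONVa}--\eqref{7_LEM_CONV_TERMS_CONVc} and \eqref{7_LEM_CONV_TERMS_CONVf}. The strong convergences \eqref{7_LEM_CONV_TERMS_CONVg} and \eqref{7_LEM_CONV_TERMS_CONVh} follow from the Aubin--Lions--Simon lemma: for $\varphi_\delta$ the compact embedding $H^1\hookrightarrow\hookrightarrow L^r$, $r<6$, combined with $\varphi_\delta\in L^\infty(H^1)\cap H^1((H^1)^*)$ gives $C^0([0,T];L^r)$-strong convergence, while the $L^2(H^2)$-bound together with $H^2\hookrightarrow\hookrightarrow W^{1,r}$ yields $L^2(W^{1,r})$-strong convergence; for $\sigma_\delta$, the pair $L^2(H^1)\cap H^1((H^1)^*)$ gives $L^2(L^r)$-strong convergence, which interpolates with the $L^\infty(L^6)$-bound to $L^p(L^r)$ for any $p<\infty$, and $C^0([0,T];(H^1)^*)$-convergence follows from $\sigma_\delta\in H^1((H^1)^*)$. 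Almost-everywhere convergence is obtained upon extracting a further subsequence.

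Finally, the weak limits of the convection terms \eqref{7_LEM_CONV_TERMS_CONVd}--\eqref{7_LEM_CONV_TERMS_CONVe} are identified by pairing strong convergence of one factor with weak convergence of the other: $\varphi_\delta\v_\delta\to\varphi\v$ and $\sigma_\delta\v_\delta\to\sigma\v$ distributionally, and since the corresponding divergences are uniformly bounded in the target spaces, uniqueness of distributional limits forces the weak limits to coincide with $\divergence(\varphi\v)$ and $\divergence(\sigma\v)$. I expect the main obstacle to be the $L^\infty(L^6)$-bound on $\sigma_\delta$, since it is not available from the energy estimates of Lemma \ref{7_LEM_APRI_DEG} and is the key ingredient powering both the control of $\divergence(\sigma_\delta\v_\delta)$ in $L^2((H^1)^*)$ and the strong $L^p(L^r)$-convergence required to pass to the limit in the nutrient convection term.
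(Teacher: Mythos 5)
Your proposal follows essentially the same route as the paper: elliptic regularity for the $L^2(H^2)$-bound on $\varphi_\delta$, Korn's inequality for the velocity, testing the nutrient equation with the fifth power of $\sigma_\delta$ plus Gronwall for the $L^\infty(L^6)$-bound, Aubin--Lions--Simon compactness for the strong convergences, and weak--strong convergence of products combined with uniqueness of (distributional) limits to identify $\divergence(\varphi\v)$ and $\divergence(\sigma\v)$. The only detail to add is that the Gronwall argument starts from the regularised initial datum $\sigma_{0,\delta}$ rather than $\sigma_0$, so one must also verify $\|\sigma_{0,\delta}\|_{L^6}\leq C\|\sigma_0\|_{L^6}$ uniformly in $\delta$ (by testing \eqref{7_boundary_cond_1d_approx} with $\sigma_{0,\delta}^5$), exactly as the paper does.
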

	\begin{proof}
		In the following we denote by $C$ a generic constant independent of $\delta\in (0,\delta_0]$.
		Using \eqref{7_APRI_DMOB_1} and elliptic regularity theory, it follows that
		\begin{equation}
		\label{7_LEM_CONV_TERMS_1}\norm{\varphi_{\delta}}_{L^{\infty}(H^1)\cap L^2(H^2)}\leq C.
		\end{equation}
		Due to Korn's inequality and \eqref{7_APRI_DMOB_1} we have
		\begin{equation}
		\label{7_LEM_CONV_TERMS_2}\norm{\v_{\delta}}_{L^2(\H^1)}+ \sqrt{\delta}\norm{\v_{\delta}}_{L^{\infty}(\L^2)}\leq C.
		\end{equation}
		Next, multiplying $\eqref{7_weak_formulation_eq_approx_1c}_1$ with $\sigma_{\delta}^5$, integrating by parts and using that 
		\begin{equation*}
		\inn{\grad\sigma_{\delta}\cdot\v_{\delta}}{\sigma_{\delta}^5} = \tfrac{1}{6} \inn{\grad(|\sigma_{\delta}|^6)}{\v_{\delta}} = -\tfrac{1}{6} \inn{|\sigma_{\delta}|^6}{\divergence(\v_{\delta})}=0\quad\text{f.\,a.\,e. }t\in (0,T),
		\end{equation*}
		we obtain
		\begin{equation*}
		\tfrac{\d}{\dt} \tfrac{1}{6}\norml{6}{\sigma_{\delta}}^6 +  5\chi_{\sigma}\inn{\sigma_{\delta}^2\,\grad\sigma_{\delta}}{\sigma_{\delta}^2\,\grad\sigma_{\delta}} =  5\chi_{\varphi}\inn{\grad\varphi_{\delta}}{\grad\sigma_{\delta}|\sigma_{\delta}|^4}- \inn{f(\varphi_{\delta},\sigma_{\delta})h(\varphi_{\delta})}{\sigma_{\delta}^5}. 
		\end{equation*}
		Using the continuous embedding $H^1\subset L^6$, the assumptions on $h$,  $f$, and \eqref{7_LEM_CONV_TERMS_1}, we can bound the right hand side by
		\begin{equation*}
		|\text{RHS}|\leq C\left(1+\normh{2}{\varphi_{\delta}}^2\right)\norml{6}{\sigma_{\delta}}^6+ 2\chi_{\sigma}\inn{\sigma_{\delta}^2\,\grad\sigma_{\delta}}{\sigma_{\delta}^2\,\grad\sigma_{\delta}},
		\end{equation*}
		and therefore
		\begin{equation}
		\label{7_LEM_CONV_TERMS_3}\tfrac{\d}{\dt} \tfrac{1}{6}\norml{6}{\sigma_{\delta}}^6 \leq C\left(1+\normh{2}{\varphi_{\delta}}^2\right)\norml{6}{\sigma_{\delta}}^6.
		\end{equation}
		Now, multiplying \eqref{7_boundary_cond_1d_approx} with $\sigma_{0,\delta}^5$, integrating by parts and neglecting the non-negative term $5\delta (\sigma_{0,\delta}^2\,\grad\sigma_{0,\delta}\mc\sigma_{0,\delta}^2\,\grad\sigma_{0,\delta})$, we obtain
		\begin{equation*}
		\norml{6}{\sigma_{0,\delta}}^6\leq  \inn{\sigma_0}{\sigma_{0,\delta}^5}\leq \tfrac{1}{2}\norml{6}{\sigma_{0,\delta}}^6 + C\norml{6}{\sigma_{0}}^6\Longrightarrow \norml{6}{\sigma_{0,\delta}}\leq C\norml{6}{\sigma_{0}}\leq C.
		\end{equation*}
		Hence, integrating \eqref{7_LEM_CONV_TERMS_3} in time from $0$ to $t\in (0,T)$ and using \eqref{7_LEM_CONV_TERMS_1}, a Gronwall argument gives
		\begin{equation*}
		\norm{\sigma_{\delta}}_{L^{\infty}(L^6)}\leq C.
		\end{equation*}
		Together with \eqref{7_LEM_CONV_TERMS_1}-\eqref{7_LEM_CONV_TERMS_2} and using similar arguments as in, e.\,g., \cite{EbenbeckGarcke,GarckeLam1}, we obtain
		\eqref{7_LEM_CONV_TERMS_EST}.\\ [1ex]
		Recalling \eqref{7_APRI_DMOB_1}, \eqref{7_LEM_CONV_TERMS_EST}, and using a generalised version of H\"older's inequality, by standard compactness arguments we obtain \eqref{7_LEM_CONV_TERMS_CONVa}-\eqref{7_LEM_CONV_TERMS_CONVc} and \eqref{7_LEM_CONV_TERMS_CONVf}-\eqref{7_LEM_CONV_TERMS_CONVh}. The argument for \eqref{7_LEM_CONV_TERMS_CONVd}-\eqref{7_LEM_CONV_TERMS_CONVe} is slightly different. Indeed, applying \eqref{7_LEM_CONV_TERMS_EST} and reflexive weak compactness arguments, we infer that
		\begin{equation*}
		\divergence(\varphi_{\delta}\v_{\delta})\to \theta\quad\text{weakly in }L^2(L^{\frac{3}{2}})
		\end{equation*}
		for some limit function $\theta\in L^2(L^{\frac{3}{2}})$. Integrating by parts, we obtain
		\begin{equation*}
		\normL{2}{\grad\varphi_{\delta}-\grad\varphi}^4\leq C\norml{2}{\varphi_{\delta}-\varphi}^2\norml{2}{\laplace(\varphi_{\delta}-\varphi)}^2.
		\end{equation*}
		Integrating this inequality in time from $0$ to $T$, using \eqref{7_LEM_CONV_TERMS_EST}, \eqref{7_LEM_CONV_TERMS_CONVg} and weak(-star) lower semicontinuity of norms, this leads to
		\begin{align*}
		\intT \normL{2}{\grad\varphi_{\delta}-\grad\varphi}^4\dt 
		\leq C\norm{\varphi_{\delta}-\varphi}_{L^{\infty}(L^2)}^2\norm{\varphi_{\delta}-\varphi}_{L^2(H^2)}^2\to 0\quad\text{as }\delta\to 0.
		\end{align*}
		By the product of weak-strong convergence and \eqref{7_LEM_CONV_TERMS_CONVc}, this yields 
		\begin{equation*}
		\divergence(\varphi_{\delta}\v_{\delta})\to \divergence(\varphi\v)\quad \text{weakly in }L^{\frac{4}{3}}(L^{\frac{3}{2}})\quad\text{as }\delta\to 0.
		\end{equation*}
		Consequently, by uniqueness of limits we obtain $\divergence(\varphi\v) = \theta\in L^2(L^{\frac{3}{2}})$. For \eqref{7_LEM_CONV_TERMS_CONVe} one can use similar arguments as in \cite{EbenbeckGarcke,GarckeLam1}, which completes the proof.
	\end{proof}
	\subsubsection{Proof of Theorem \ref{7_THM_DEG_MOB}}
	We divide the analysis into several steps:\\ [1ex]
	\textbf{Step 1:}
	Passing to the limit in \eqref{7_APRI_DMOB_2} and using \eqref{7_LEM_CONV_TERMS_CONVg}, we conclude that
	\begin{equation*}
	\varphi\geq -1 \quad\text{a.\,e.\ in }Q.
	\end{equation*}
	Recalling \eqref{7_WF_DEGMOB_PROOF_4a}, the quadruplet $(\varphi_{\delta},\mu_{\delta},\sigma_{\delta},\v_{\delta})$ fulfils
	\begin{align*}
	0&= \intT  \hspace{-1pt}\inndual{\del_t\varphi_{\delta}}{\xi }_{H^1}  \splus \inn{\grad\varphi_{\delta}\cdot\v_{\delta}\sminus g(\varphi_{\delta},\sigma_{\delta})h(\varphi_{\delta})}{\xi} \splus \inn{m_{\delta}(\varphi_{\delta})\grad\mu_{\delta}}{\grad\xi} \dt,\\
	0&= \intT \zeta\, \big(2\eta\inn{\D \v_{\delta}}{\D\mathbf{u}} \splus \nu\inn{\v_{\delta}}{\mathbf{u}} \sminus \epsilon\inn{\grad\varphi_{\delta}\otimes\grad\varphi_{\delta}}{\grad\mathbf{u}}\big)\sminus\zeta'\delta\inn{\v_{\delta}}{\mathbf{u}}\dt \\
	0&= \intT \zeta\,\big( \inn{\del_t\sigma_{\delta}\splus f(\varphi_{\delta},\sigma_{\delta})h(\varphi_{\delta})}{\phi} \splus\inn{\chi_{\sigma}\grad\sigma_{\delta}\sminus\chi_{\varphi}\grad\varphi_{\delta}\sminus \sigma_{\delta}\v_{\delta}}{\grad \phi}\big)\dt
	\end{align*}
	for all $\zeta\in C_0^{\infty}(0,T)$, $\xi\in L^2(H^1)$, $\phi\in H^1$ and $\mathbf{u}\in \mathbf{V}$, where $\mu_{\delta}$ is given by
	\begin{equation*}
	\mu_{\delta} = -\epsilon^{-1}\laplace\varphi_{\delta}+\epsilon\psi_{\delta}'(\varphi_{\delta})-\chi_{\varphi}\sigma_{\delta}\qquad\text{a.\,e. in }Q.
	\end{equation*}
	Using Lemma \ref{7_LEM_CONV_TERMS}, with similar arguments as in, e.\,g., \cite{EbenbeckGarcke}, it follows that 
	\begin{align*}
	\intT   \inndual{\del_t\varphi}{\xi}_{H^1}\dt &= \intT \inn{\mathbf{J}}{\grad\xi} - \inn{\grad\varphi\cdot\v}{\xi} + \inn{g(\varphi,\sigma)h(\varphi)}{\xi}\dt,\\
	\inndual{\del_t\sigma}{\phi}_{H^1} &= - \inn{\chi_{\sigma}\grad\sigma-\chi_{\varphi}\grad\varphi-\sigma\v}{\grad\phi}  - \inn{f(\varphi,\sigma)h(\varphi)}{\phi}
	\end{align*}
	for almost all $t\in (0,T)$ and all $\xi\in L^2(H^1)$, $\phi\in H^1$. Due to \eqref{7_LEM_CONV_TERMS_EST} and the continuous embedding $L^{\infty}(\L^2)\cap L^2(\H^1)\hookrightarrow L^4(\L^3)$, we have that
	\begin{equation*}
	\norm{\grad\varphi_{\delta}\otimes \grad\varphi_{\delta}}_{L^{\frac{4}{3}}(\L^2)}\leq C.
	\end{equation*}
	Using reflexive weak compactness arguments, this means that $\grad\varphi_{\delta}\otimes \grad\varphi_{\delta}\rightharpoonup \boldsymbol{\theta}$ in $L^{\frac{4}{3}}(\L^2)$ for some $\boldsymbol{\theta}\in L^{\frac{4}{3}}((L^2)^{d\times d})$. Applying \eqref{7_LEM_CONV_TERMS_CONVa} and \eqref{7_LEM_CONV_TERMS_CONVg}, by the product of weak strong convergence we obtain
	\begin{equation*}
	\grad\varphi_{\delta}\otimes \grad\varphi_{\delta}\to \grad\varphi\otimes \grad\varphi\quad\text{weakly in }L^{\frac{4}{3}}(\L^p)\quad\forall \, p\in (1,2).
	\end{equation*}
	Then, by uniqueness of weak limits we deduce that $\boldsymbol{\theta} = \grad\varphi\otimes\grad\varphi$. Then, using the boundedness of $\sqrt{\delta}\v_{\delta}\in L^{\infty}(\L^2)$ and using $\zeta\grad\mathbf{u}\in C^0([0,T];\L^2)$, we infer that
	\begin{equation*}
	0=   2\eta\inn{\D \v}{\D\mathbf{u}} + \nu\inn{\v}{\mathbf{u}} - \epsilon\inn{\grad\varphi\otimes\grad\varphi}{\grad\mathbf{u}}
	\end{equation*}
	for almost all $t\in (0,T)$ and all $\mathbf{u}\in\mathbf{V}$.\\ [1ex]
	\textbf{Step 2:}
	In order to identify \textbf{J}, straightforward modifications of the arguments in \cite{GarckeElliot} can be applied.
	We remark that $m'$ is given by
	\begin{equation*}
	m'(u) = \begin{cases}
	0 & \text{for } u<-1,\\
	q_0\tfrac{1}{2^{q_0}}(1+u)^{q_0-1}\bar{m}(u) + \left(\tfrac{1}{2}(1+u)\right)^{q_0}\bar{m}'(u) &\text{for } u\in (-1,1),\\
	\bar{m}'(u)&\text{for }u>1,
	\end{cases}
	\end{equation*}
	and thus we observe that $m'(\cdot)$ may be discontinuous in $1$, and $m'(\cdot)$ is discontinuous in $-1$ if $q_0=1$ and $\bar{m}(-1)\neq 0$. 
	Therefore, we conclude that \eqref{7_WF_DEGMOB_5} holds.\\[1ex]
	\textbf{Step 3:} Attainment of initial conditions follows with standard arguments, see, e.\,g., \cite{EbenbeckGarcke}. We notice that $\sigma(0)$ is well-defined due to the continuous embedding $H^1((H^1)^*)\cap L^2(H^1)\hookrightarrow C^0([0,T];L^2)$.
	Moreover, the uniform estimates and weak(-star) lower semi-continuity of norms imply that
	\begin{equation*}
	\mathcal{S}\coloneqq  -\divergence(2\eta\D\v -\epsilon\left(\grad\varphi\otimes\grad\varphi\right)) + \nu\v \in L^{\frac{4}{3}}(\mathbf{V}^*).
	\end{equation*}
	Hence,  there exists a unique pressure $p\in L^{\frac{4}{3}}(L_0^2)$ satisfying $-\grad p = \mathcal{S}$ in the sense of distributions, see \cite[Lem. II.2.2.2]{Sohr} for details, which completes the proof.
	
	\section{Numerical results}
	In this section, we show several numerical simulations for the tumour
	growth
	model derived in the previous sections, in the case $d=2$.
	We consider the system
	\begin{subequations}\label{3_NUMS_Model_eq}
		\begin{alignat}{3}
		\label{3_NUMS_Model_eq_1a}\divergence(\v)&=
		\alpha\tfrac{1}{2}(\mathcal{P}\sigma-\mathcal{A})(\varphi+1)&&\qquad\text{in
		}Q,\\
		\label{3_NUMS_Model_eq_1b}-\divergence(\T(\varphi,\v,p))+\nu\v&=
		(\mu+\chi_{\varphi}\sigma)\grad\varphi&&\qquad\text{in }Q,\\
		\label{3_NUMS_Model_eq_1c}\delt\varphi +
		\divergence(\varphi\v)&= \divergence(m(\varphi)\grad\mu) +
		\rho_S\tfrac{1}{2}(\mathcal{P}\sigma-\mathcal{A})(\varphi+1)&&\qquad\text{in
		}Q,\\
		\label{3_NUMS_Model_eq_1d}\mu &=
		\tfrac{\beta}{\epsilon}\psi'(\varphi)-\beta\epsilon\laplace\varphi-\chi_{\varphi}\sigma&&\qquad\text{in
		}Q,\\
		\label{3_NUMS_Model_eq_1e}0&=
		\mathcal{D}\divergence(\grad\sigma-\chi\grad\varphi)-\tfrac{1}{2}\mathcal{C}\sigma(\varphi+1)&&\qquad\text{in
		}Q,
		\end{alignat}
	\end{subequations}
	where
	\begin{equation*}
	\T(\varphi,\v,p) = 2\eta(\varphi)\D\v + \lambda(\varphi)\divergence(\v)\I-p\I,
	\end{equation*}
	and with mobilities of the form \eqref{3_mobility}, that means
	\begin{equation}
	\label{3_mobility_NUMS}
	\text{(i)}\, m(\varphi)=m_0,\qquad \text{(ii)}\, m(\varphi)=\epsilon
	m_0,\qquad \text{(iii)}\, m(\varphi)=m_0\tfrac{1}{2}(1+\varphi)^2.
	\end{equation}
	We supplement the system with initial and boundary conditions of the
	form
	\begin{subequations}\label{3_NUMS_Model_IBC}
		\begin{alignat}{3}
		\label{3_NUMS_Model_IBC_1a}\grad\mu\cdot\n &=
		\grad\varphi\cdot\n = 0,\quad \sigma= \sigma_B&&\quad\text{on
		}\oldSigma,\\
		\label{3_NUMS_Model_IBC_1c}\T(\varphi,\v,p)\n&= 0 \quad\text{on
		}\del_1\Omega\times(0,T),\qquad \v=\mathbf{0}&&\quad\text{on
		}\del_2\Omega\times (0,T),\\
		\label{3_NUMS_Model_IBC_1d}\varphi(0)&=\varphi_0&&\quad\text{in
		}\Omega,
		\end{alignat}
	\end{subequations}
	where $\sigma_B$ is a given function and $\del_1\Omega$,
	$\del_2\Omega\subset\del\Omega$, are measurable, relatively open such
	that
	\begin{equation*}
	\overline{\del_1\Omega\cup\del_2\Omega} =
	\del\Omega\quad\text{and}\quad\del_1\Omega\cap\del_2\Omega = \emptyset.
	\end{equation*}
	In \eqref{3_NUMS_Model_eq} we denote by $\mathcal{P}$, $\mathcal{A}$ and
	$\mathcal{C}$ the proliferation, apoptosis and consumption rate.
	Moreover, the parameters $\mathcal{D}$, $\chi_{\varphi}$, $\chi$ and
	$\beta$ are related to nutrient diffusion, chemotaxis, active transport
	and cell-cell adhesion. The remaining variables and parameters are defined
	as before. In the case \eqref{3_mobility_NUMS}\textnormal{(ii)} we
	always set $\rho_S=\alpha$ in order to fulfil
	\eqref{3_outer_equations_1c_ii}. We remark that setting
	$\eta(\cdot)=\lambda(\cdot)\equiv 0$ leads to a Cahn--Hilliard--Darcy
	model.\newline
	\def\boldS{\mathbf S}
	\subsection{Finite element approximation}
	Let $\mathcal{T}$ be a
	regular triangulation of $\Omega$ into disjoint open simplices,
	associated with $\mathcal{T}$ is the piecewise polynomial finite element
	spaces
	\begin{align*}
	S^{h}_k :=  \left \{ \varphi \in C^{0}(\overline\Omega)
	\Big| \,  \varphi_{|_{T}} \in P_{k}(T) \; \forall ~T \in \mathcal{T}
	\right \}
	\subset H^1(\Omega),\quad k \in \mathbb{N},
	\end{align*}
	where we denote by $P_{k}(T)$ the space of polynomials of degree $k$ on
	$T$,
	and extend them naturally to the vector-valued spaces
	$\boldS^h_k$, $k\in \mathbb{N}$.
	Moreover, we define
	\begin{align*}
	K^{h} & := \{ \chi \in S^{h}_1 |~ |\chi| \leq 1 \}, \quad
	S^{h, \alpha}_1 := \{ \chi \in S^{h}_1 |~\chi = \alpha \text{ on }
	\partial \Omega \},\ \alpha \in \mathbb{R}, \nonumber \\
	\boldS^{h,0}_2 & := \{ \boldsymbol\chi \in \boldS^{h}_2
	|~\boldsymbol\chi = \mathbf{0}
	\text{ on } \partial_2 \Omega \},
	\end{align*}
	and let $I^h_k : C(\overline\Omega) \to S^h_k$ denote the standard
	interpolation operators.
	Let $(\,\cdot\mc\cdot\,)_h$ denote the mass-lumped $L^2$ inner product
	on $\Omega$ induced by $\mathcal{T}$, so that, for
	$v,w\in C(\overline\Omega)$ it holds that
	$(v,w)_h = (1,I^h_1[vw])$.
	We now introduce a finite element approximation of the tumour model
	\eqref{3_NUMS_Model_eq}-\eqref{3_NUMS_Model_IBC} with the obstacle
	potential
	\eqref{3_def_double_obstacle}.
	For simplicity we assume that $\sigma_B\in \mathbb{R}$.
	Let $\varphi_h^0 = I^h_1[\varphi_0]$, $\mu_h^0=0$, $\sigma_h^0 =
	\sigma_B$
	and fix a time step size $\tau > 0$.
	Then, for $n\geq1$, find $\v_h^{n} \in \boldS^{h,0}_2$, $p_h^n \in
	S^h_1$,
	$\varphi_{h}^{n} \in K^{h}$, $\mu_{h}^{n} \in S^{h}_1$,
	$\sigma_{h}^{n} \in S^{h,\sigma_B}_1$,
	such that for all $\boldsymbol{\xi}_h \in \boldS^{h,0}_2$, $\chi_h \in
	S^h_1$,
	$\phi_{h} \in S^{h}_1$, $\zeta_{h} \in K^{h}$ and
	$\xi_{h} \in S^{h,0}_1$
	\begin{subequations} \label{eq:chbfea}
		\begin{align}
		&
		2\inn{\eta(\varphi_h^{n-1}) \D( \v_h^{n})}{\D(
			\boldsymbol{\xi}_h)}
		+
		\inn{\lambda(\varphi_h^{n-1})\divergence(\v_h^n)-p_h^{n}}{\divergence(\boldsymbol{\xi}_h)}
		+ \nu\inn{\v_h^n}{\boldsymbol{\xi}_h}
		\nonumber \\ & \qquad\qquad
		=
		\inn{(\mu_h^{n-1}+\chi_{\varphi}\sigma_h^{n-1})\grad\varphi_h^{n-1}}{\boldsymbol{\xi}_h},
		\label{eq:chbfea_a}\\
		& \inn{\divergence(\v_h^{n})}{\chi_h} =
		\tfrac12\alpha
		\inn{(\mathcal{P}\sigma_h^{n-1}-\mathcal{A})(\varphi_h^{n-1}+1)}{\chi_h}_h,
		\label{eq:chbfea_b} \\
		& \tfrac{1}{\tau} \inn{\varphi_{h}^{n} -
			\varphi_{h}^{n-1}}{\phi_{h}}_{h}
		+ \inn{\v^{n} \cdot\grad\varphi_h^{n-1}}{\phi_h}
		+ \inn{m(\varphi_h^{n-1}) \grad\mu_{h}^{n}}{\nabla \phi_{h}}_h
		\nonumber \\ & \qquad\qquad
		= \tfrac12 \inn{(\rho_S - \alpha\varphi_h^{n-1})
			(\mathcal{P}\sigma_h^{n-1}-\mathcal{A})(\varphi_h^{n-1}+1)}{\phi_h}_h
		, \label{eq:chbfea_c} \\
		& \inn{\mu_{h}^{n} + \tfrac{\beta}{\epsilon} \varphi_{h}^{n-1}
			+ \chi_{\varphi} \sigma_{h}^{n-1}}{\zeta_{h} -
			\varphi_{h}^{n}}_{h} \leq \beta \epsilon
		\inn{\grad \varphi_{h}^{n}}{\grad (\zeta_{h} -
			\varphi_{h}^{n})},
		\label{eq:chbfea_d} \\
		& \mathcal{D}\inn{\nabla \sigma_{h}^{n}}{\nabla \xi_h}
		+\tfrac{1}{2} \mathcal{C} \inn{\sigma_{h}^{n}(\varphi_{h}^{n} +
			1)}{\xi_h}_h
		= \mathcal{D}\chi \inn{\grad \varphi_{h}^{n}}{\nabla \xi_h}
		. \label{eq:chbfea_e}
		\end{align}
	\end{subequations}
	We implement \eqref{eq:chbfea} within the finite element package
	Alberta,
	\cite{Alberta}, and use adaptive meshes that are refined in the
	interfacial
	region, where $|\varphi_h^{n-1}| < 1$. In particular, away from the
	interface a
	coarse mesh corresponding to a uniform $N_c \times N_c$ grid is used,
	while the interfacial region is resolved with a mesh size corresponding
	to a
	uniform $N_f \times N_f$ grid. The precise strategy is described in
	\cite{voids}.
	We note that the time discretization in \eqref{eq:chbfea} is chosen such
	that
	the overall system decouples into three independent systems:
	the linear discrete Stokes problem
	\eqref{eq:chbfea_a}-\eqref{eq:chbfea_b},
	featuring the LBB stable lowest order Taylor--Hood element,
	the nonlinear discrete Cahn--Hilliard equation
	\eqref{eq:chbfea_c}-\eqref{eq:chbfea_d}, with the discrete variational
	inequality \eqref{eq:chbfea_d} due to the chosen obstacle potential,
	and the linear equation \eqref{eq:chbfea_e} for the nutrient
	approximation.
	In practice, for each time step,
	we first solve \eqref{eq:chbfea_a}-\eqref{eq:chbfea_b}
	with the help of a preconditioned GMRES iteration,
	followed by solving \eqref{eq:chbfea_c}-\eqref{eq:chbfea_d} with the
	Uzawa
	solver from \cite{vch}, see also \cite{voids3d}, before solving
	\eqref{eq:chbfea_e} with a direct solver.
	Here all the occuring linear problems, e.g.\ as part of the above
	iterative solvers and preconditioners, are solved with the help of the
	sparse factorization packages LDL, AMD (\cite{AmestoyDD04,Davis05}) or
	UMFPACK (\cite{Davis04}), depending on whether the systems are
	symmetric or not.
	\subsection{Results}
	Throughout we let $\Omega = (-3,3)^2$.
	As initial data we choose $\varphi_0 \in C^0(\overline \Omega)$ defined
	as
	\begin{equation} \label{eq:phi0}
	\varphi_0(\mathbf{x}) = \begin{cases}
	1 & r \leq -\tfrac12\pi\epsilon,\\
	- \sin( \frac{r(\mathbf{x})}\epsilon ) & |r(\mathbf{x})| <
	\tfrac12\pi\epsilon,\\
	-1 & r \geq \tfrac12\pi\epsilon,
	\end{cases}
	\end{equation}
	where
	\begin{align*} 
	r(\mathbf x) & = |\mathbf x| - \left(\tfrac12 +
	\tfrac1{40}\cos(2\theta)\right)
	\end{align*}
	and $\mathbf{x} = |\mathbf{x}|(\cos\theta, \sin\theta)^\intercal$.
	The first  initial profile related to $r$ is shown in Figure \ref{3_FIG_0}.
	Unless otherwise stated, we will always use the following set of
	parameters
	\begin{equation}\label{3_NUMS_Parameters}
	\begin{aligned}
	&\epsilon = 0.02,\quad \alpha = 0.5,\quad\rho_S = 2,\quad
	\mathcal{P}=0.1,\quad \mathcal{A}=0,\quad \mathcal{C}=2,\quad
	\chi_{\varphi}=5,   \\
	&\mathcal{D} = 1,\quad \sigma_B = 1,\quad\chi = 0.02,\quad\lambda =
	0,\quad\nu = 100,\quad \del_1\Omega = \Omega.
	\end{aligned}
	\end{equation}
	For the discretization parameters we always choose $N_c = 16$, $N_f = 1024$ and $\tau = 10^{-4}$.
	\begin{figure}[!h]
		\centering
		\includegraphics[angle=-0,width=0.31\textwidth]{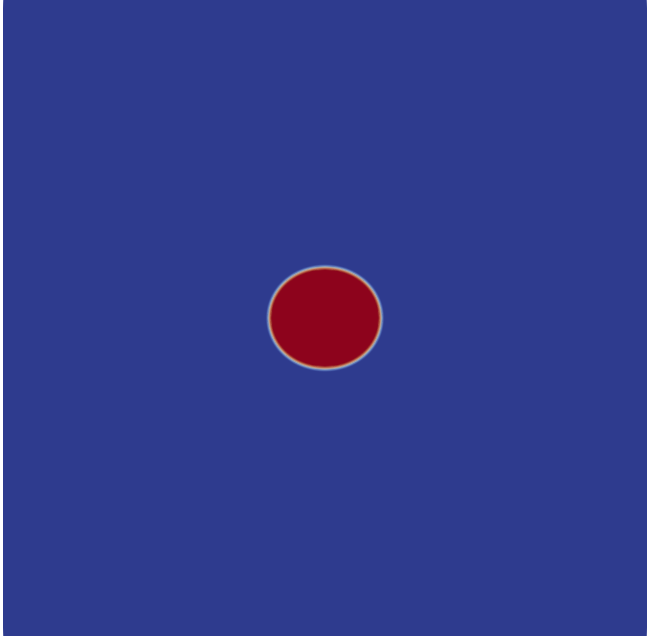}
		\caption{Initial tumour size for initial data $r$: A slightly perturbed sphere.}
		\label{3_FIG_0}
	\end{figure}
	
	\noindent We will now systematically interpret the influence of
	different parameters in our model.
	\subsection{Brinkman's and Darcy's law}
	In the following we investigate the relation of the
	Cahn--Hilliard--Brinkman (CHB) and Cahn--Hilliard--Darcy (CHD) models.
	For small viscosities we expect a similar qualitative behaviour of
	solutions to the corresponding systems.
	For the mobility we take $m(s)=\frac{1}{2}(1+s)^2$, which corresponds to
	\eqref{3_mobility_NUMS}\textnormal{(iii)} with $m_0=1$.
	In Figure \ref{3_FIG_1} we show the tumour for both the CHD and CHB model for
	$\eta=10^{-5}$ at time $t=12$.
	We see that the qualitative behaviour for both models is similar for low
	viscosities.
	\begin{figure}[!h]
		\centering
		\includegraphics[width =
		0.3\textwidth]{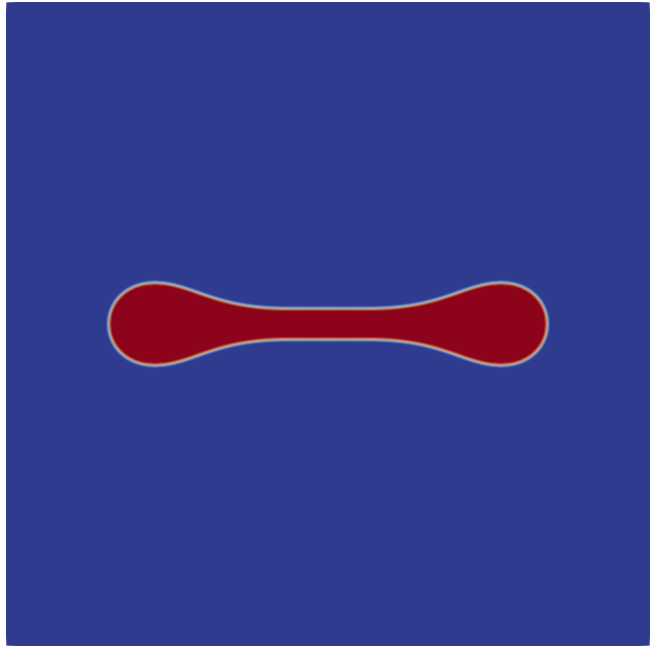}\hspace*{+10pt}
		\includegraphics[width
		=0.345\textwidth]{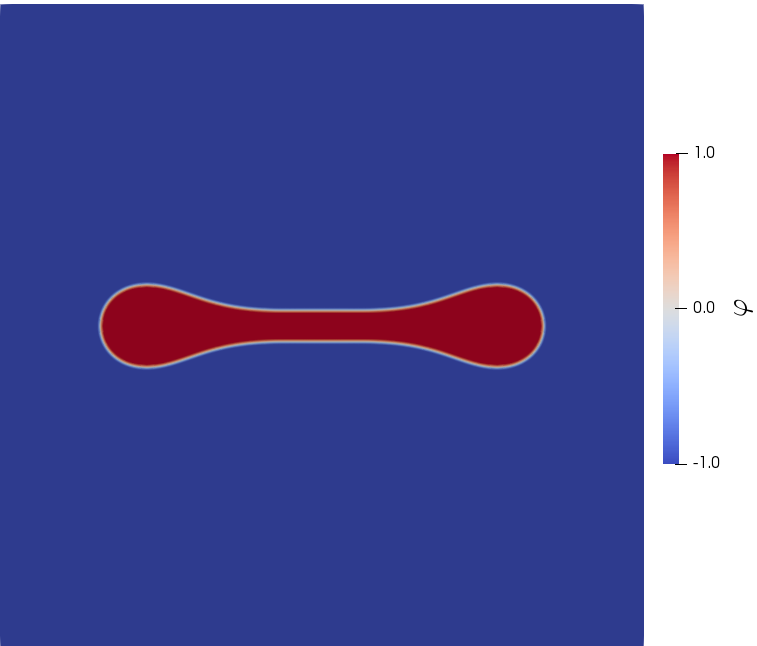}
		\caption{Comparison of Cahn--Hilliard--Darcy and Cahn--Hilliard--Brinkman models: Tumour at time $t=12$ for $\beta=0.1$, left side for
			the CHD model, right side for the CHB model with $\eta = 10^{-5}$.}
		\label{3_FIG_1}
	\end{figure}
	
	\subsection{Influence of mobility and adhesion}
	We now investigate the influence of the mobility and the cell-cell adhesion.
	In Figure \ref{3_FIG_2} we show the evolutions with $\eta=10^{-5}$
	and for different mobilities. The formal asymptotic analysis in the
	previous section indicates that the mobility
	\eqref{3_mobility_NUMS}\textnormal{(ii)}, corresponds to a free boundary
	problem where the interface is transported solely by the fluid velocity.
	\begin{figure}[!h]
		\centering
		\includegraphics[width =
		0.301\textwidth]{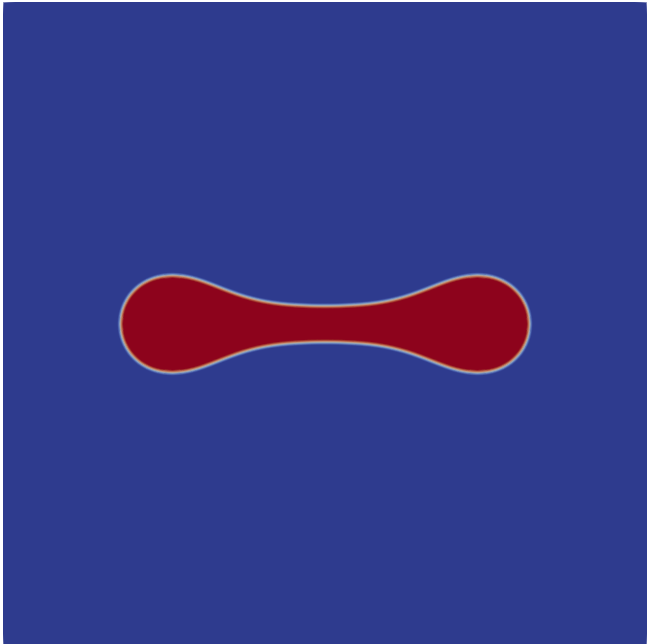}\hspace*{+10pt}
		\includegraphics[width =
		0.3\textwidth]{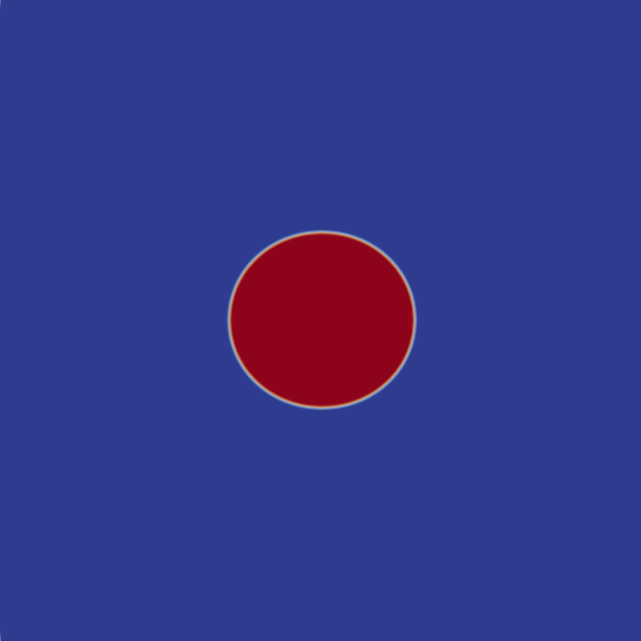}\hspace*{+10pt}
		\includegraphics[width =
		0.36\textwidth]{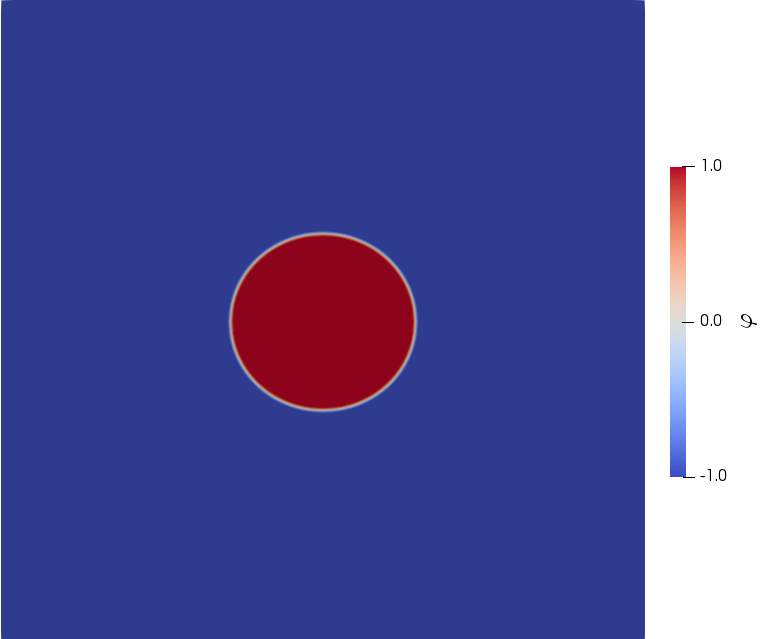}
		\caption{Influence of different mobilities: Tumour at time $t=9$ for $\eta=10^{-5}$, $\beta=0.1$
			and $\alpha=\rho_S=2$, but with different mobilities, left
			$m(\varphi)=\frac{1}{2}(1+\varphi)^2$, middle $m(\varphi) = \epsilon$,
			right $m(\varphi)=10^{-3}\epsilon$.}
		\label{3_FIG_2}
	\end{figure}
	\newline Thus, we see that a one-sided degenerate mobility causes
	instabilities while pure transport by the velocity stabilises the
	interface. Moreover, having a closer look we see that the thickness of
	the interface is smaller for the mobility
	$m(\varphi)=10^{-3}\epsilon$.\newline
	As the Ginzburg--Landau energy models adhesion forces, it can be
	expected that a reduction of the parameter $\beta>0$ reduces adhesion
	forces and leads to instabilities. In Figure \ref{3_FIG_3}, we compare
	the tumour evolutions for $\beta\in \{0.1,0.01\}$ with $ \eta = 0.1$ and
	for the mobility $m(\varphi)=\tfrac{1}{2}(1+\varphi)^2$. We see that the
	instabilities are more pronounced for $\beta=0.01$ and the fingers are
	longer and thinner.
	\begin{figure}[!h]
		\centering
		\includegraphics[width =
		0.22\textwidth]{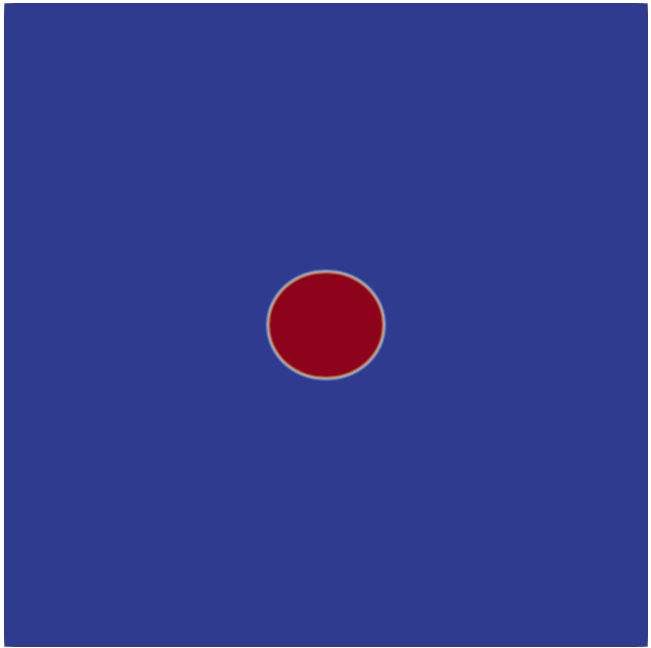}\hspace*{+10pt}
		\includegraphics[width =
		0.22\textwidth]{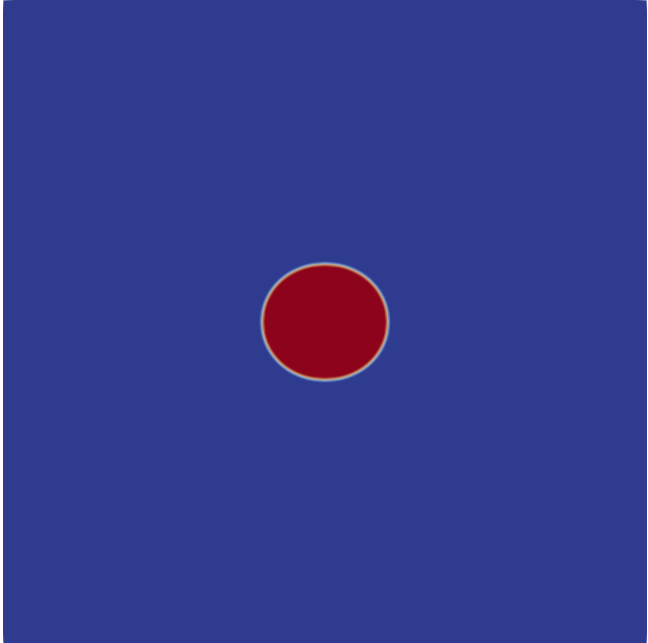}\hspace*{+10pt}
		\includegraphics[width =
		0.22\textwidth]{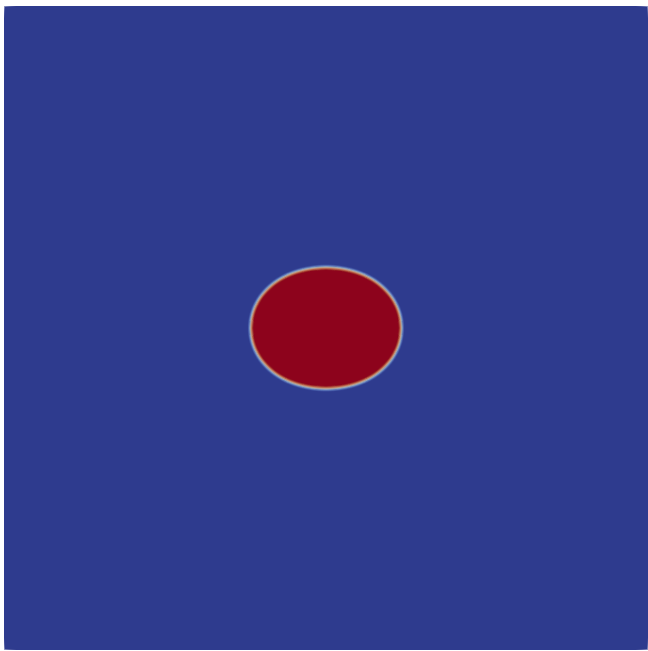}\hspace*{+10pt}
		\includegraphics[width =
		0.22\textwidth]{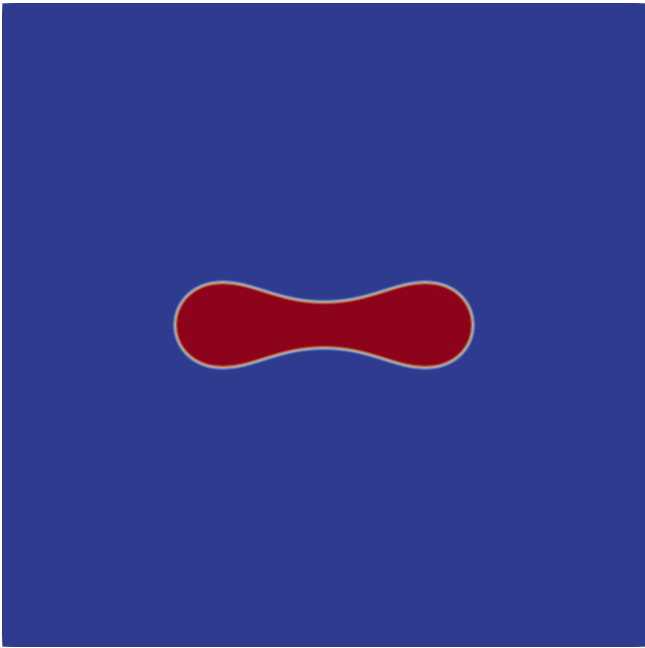}\\
		\vspace*{+10pt}
		\includegraphics[width =
		0.22\textwidth]{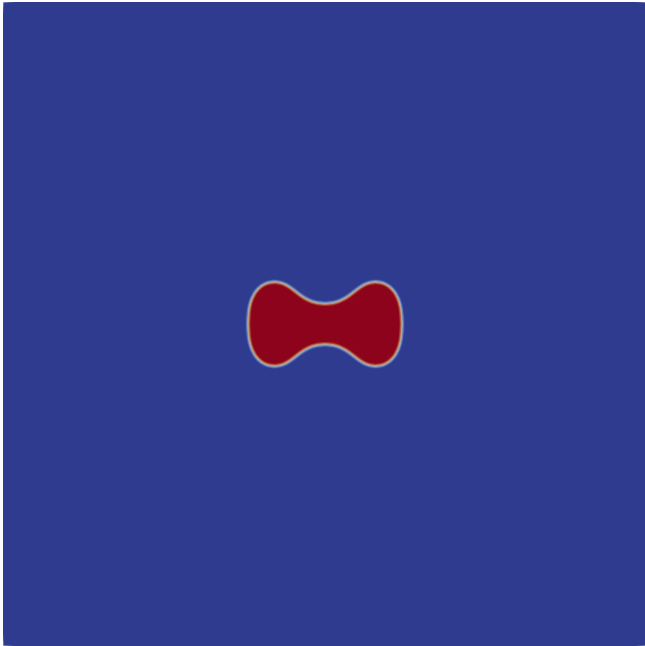}\hspace*{+10pt}
		\includegraphics[width =
		0.22\textwidth]{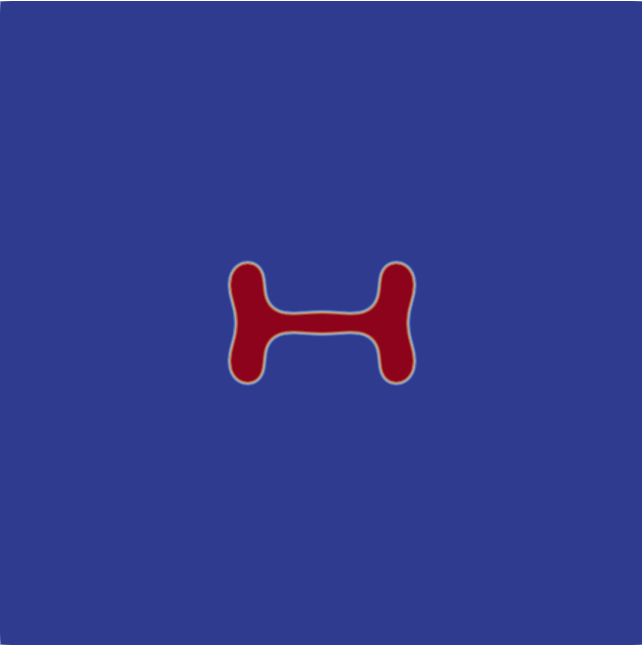}\hspace*{+10pt}
		\includegraphics[width =
		0.22\textwidth]{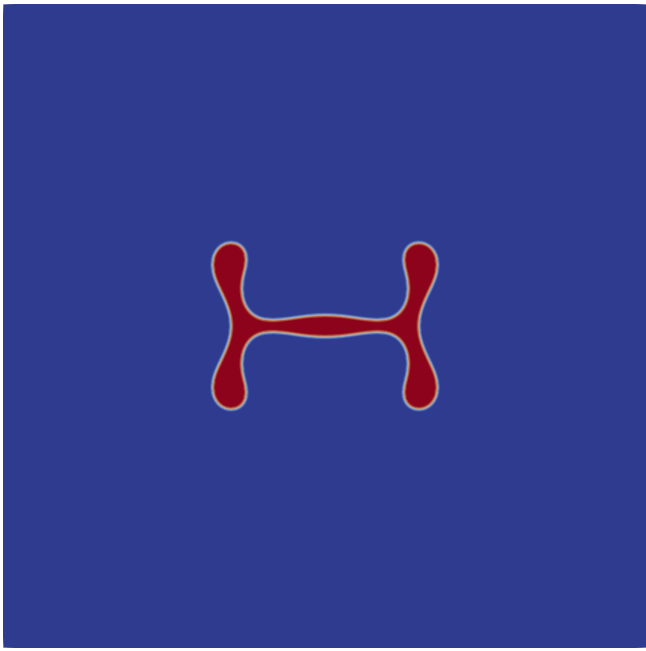}\hspace*{+10pt}
		\includegraphics[width =
		0.22\textwidth]{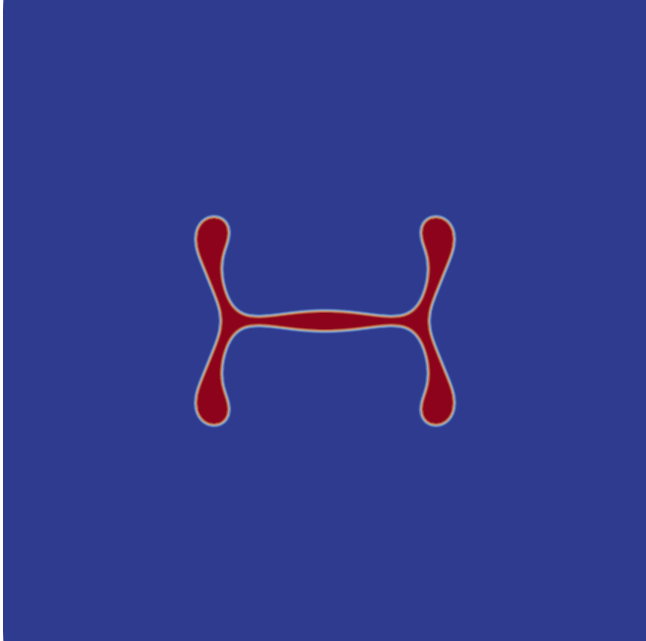}
		\caption{Influence of the adhesion parameter $\beta$: Evolution of the tumour with
			$m(\varphi)=\tfrac{1}{2}(1+\varphi)^2$ and $\eta=0.1$, above for $\beta
			= 0.1$  at time $t=1,3,6,10$, below for $\beta = 0.01$ at time
			$t=1,1.5,2,2.5$.}
		\label{3_FIG_3}
	\end{figure}
	
	\subsection{Influence of the viscosity}
	Next we investigate the influence of the viscosity and we always take
	the one-sided degenerate mobility
	$m(\varphi)=\tfrac{1}{2}(1+\varphi)^2$. \newline In Figure \ref{3_FIG_5}, we
	compare the tumour at time $t=2.5$ for constant viscosities
	$\eta\in\{0.1,100\}$ and the Neumann boundary condition for the stress
	tensor. We see that the results look nearly identical. We also plot the
	velocity magnitude which is slightly bigger for $\eta=0.1$. Thus, it
	seems that the influence of viscosity in the case of stress free
	boundary conditions is rather low.
	\begin{figure}[!h]
		\centering
		\hspace*{-20pt}\includegraphics[width =
		0.3\textwidth]{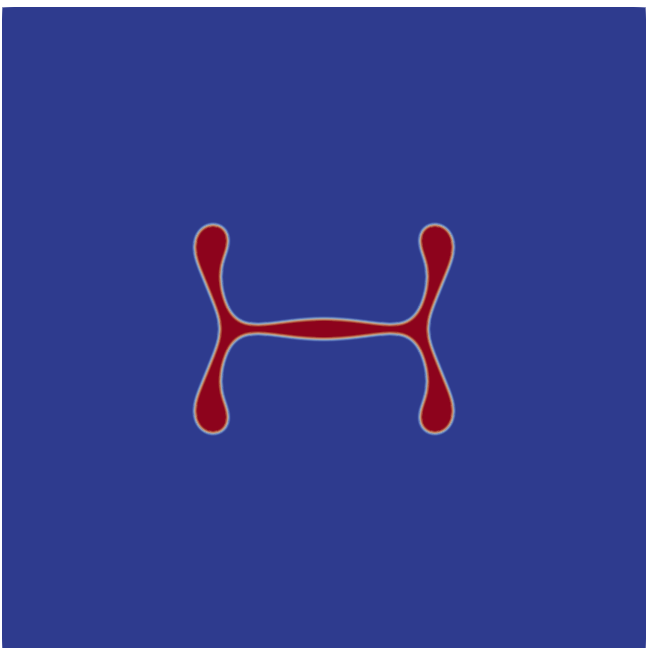}\hspace*{+39pt}
		\includegraphics[width =
		0.35\textwidth]{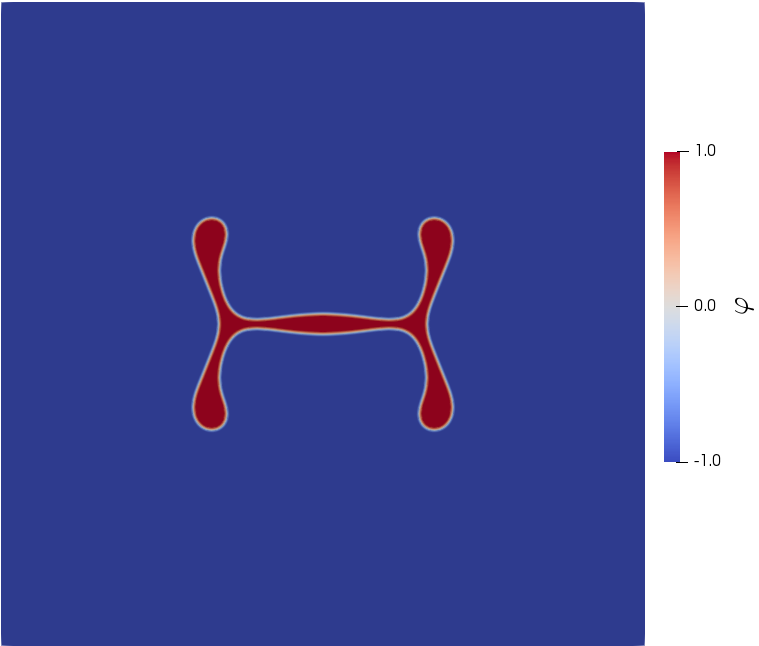}\\
		\vspace*{+10pt}
		\includegraphics[width =
		0.37\textwidth]{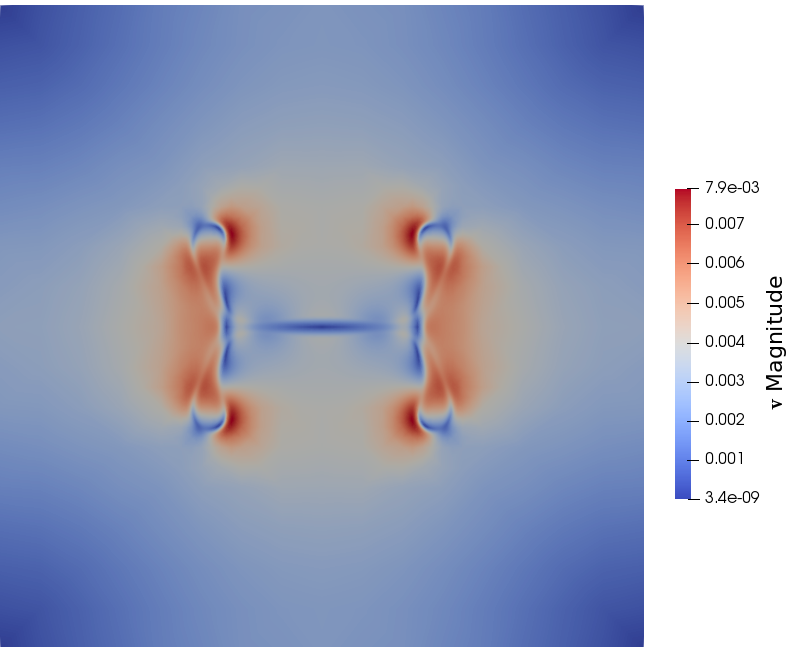}\hspace*{+10pt}
		\includegraphics[width =
		0.37\textwidth]{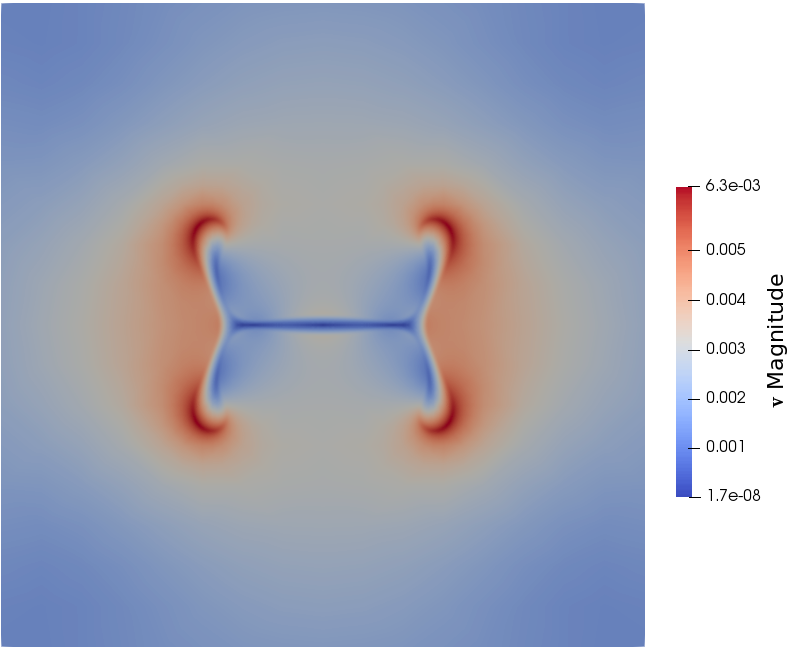}\hspace*{+10pt}
		\caption{Influence of viscosity I: Tumour and velocity for $\beta=0.01$ at time $t=2.5$,
			left for $\eta=0.1$, right for $\eta=100$, on top the tumour and below
			the velocity magnitude.}
		\label{3_FIG_5}
	\end{figure}
	\newline In the case of no-slip conditions on one part of the boundary
	we observe a different situation. In Figure \ref{3_FIG_6}, we plot the
	evolution for $\eta\in \{ 0.1,10\}$ with $\nu=0$, $\beta=0.1$ and a
	no-slip boundary condition on the left boundary, i.\,e., $\del_2\Omega =
	\{-3\}\times (-3,3)$. We see that for low viscosity the tumour evolves
	radially symmetric whereas instabilities appear if the viscosity is
	higher.
	\begin{figure}[!h]
		\centering
		\includegraphics[width =
		0.22\textwidth]{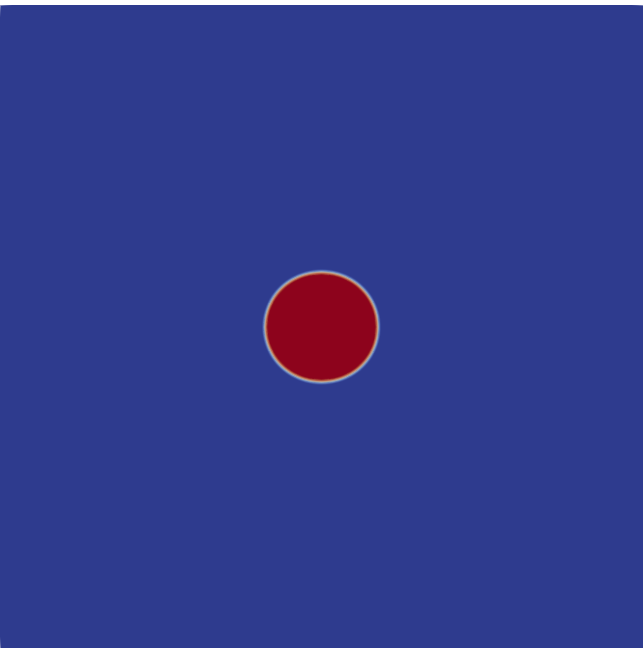}\hspace*{+10pt}
		\includegraphics[width =
		0.22\textwidth]{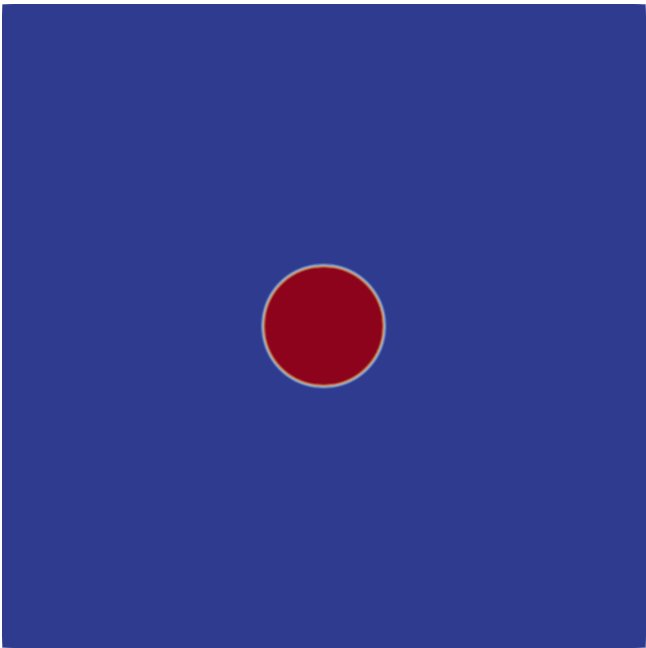}\hspace*{+10pt}
		\includegraphics[width =
		0.22\textwidth]{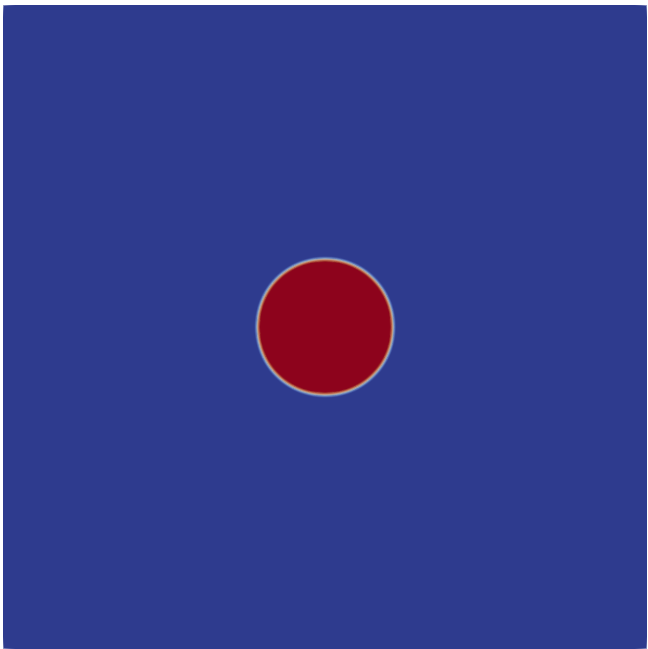}\hspace*{+10pt}
		\includegraphics[width =
		0.22\textwidth]{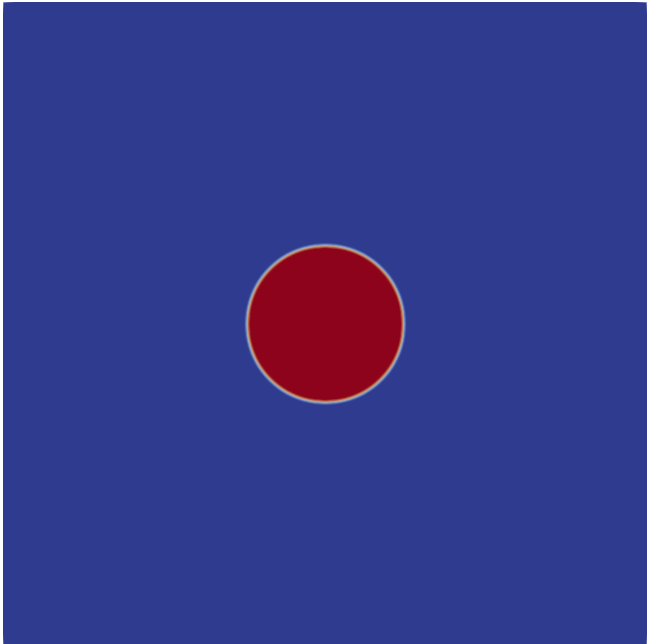}\\
		\vspace*{+10pt}
		\includegraphics[width =
		0.22\textwidth]{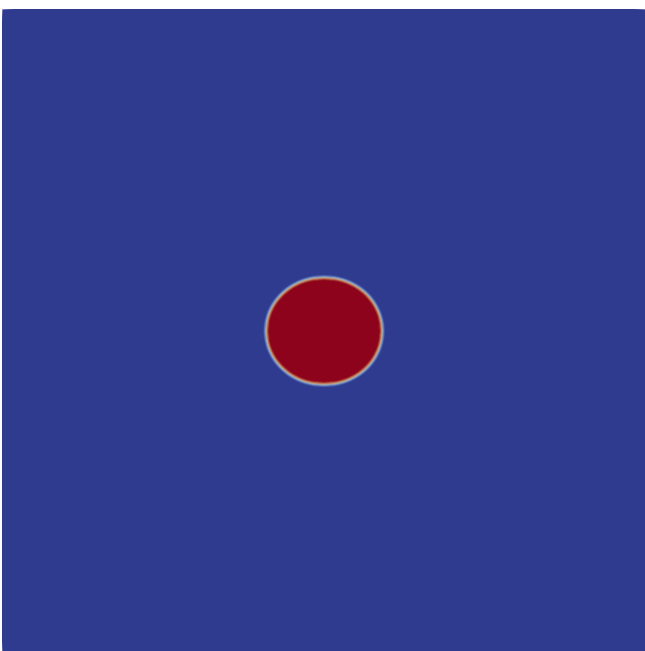}\hspace*{+10pt}
		\includegraphics[width =
		0.22\textwidth]{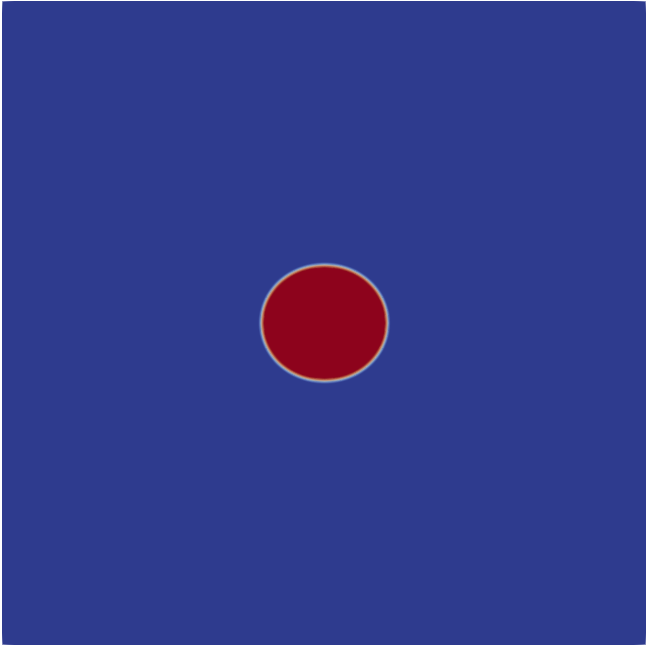}\hspace*{+10pt}
		\includegraphics[width =
		0.22\textwidth]{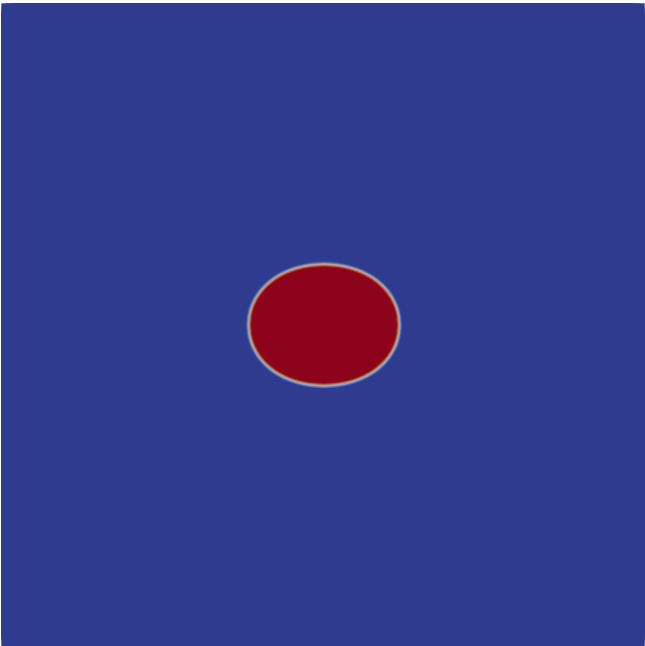}\hspace*{+10pt}
		\includegraphics[width =
		0.22\textwidth]{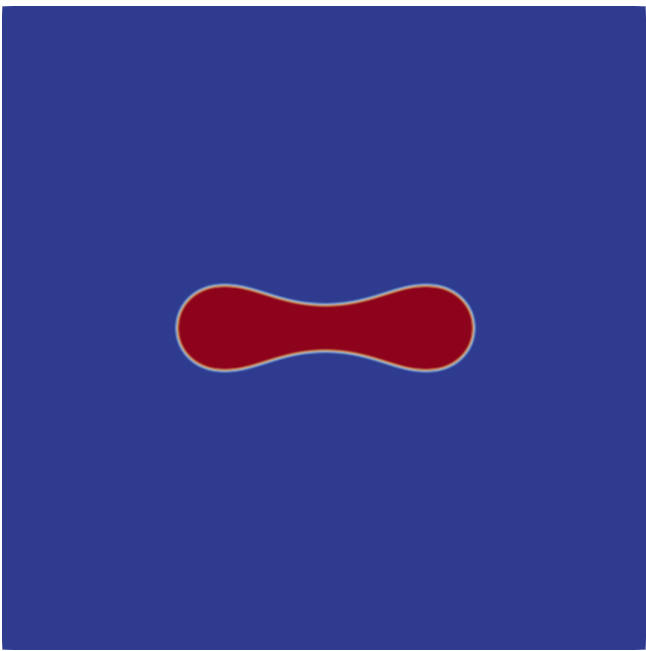}
		\caption{Influence of viscosity II: Evolution of the tumour at time $t=1,3,6,10$ with
			$\beta=0.1$, $\nu=0$ and a no-slip boundary condition on the left
			boundary, on top for $\eta=0.1$ and below for $\eta=10$.}
		\label{3_FIG_6}
	\end{figure}
	
	\noindent We also show the velocity magnitudes at $t=10$ in
	Figure \ref{3_FIG_7}. Although the maximal magnitudes are almost the same, we
	see more regions with high velocity if the viscosity is bigger, that
	means for $\eta=10$. It is also worth noticing that the velocity field
	is no longer symmetric as observed in Figure \ref{3_FIG_5} which is due to the
	no-slip boundary condition.
	\begin{figure}[!h]
		\centering
		\includegraphics[width =
		0.35\textwidth]{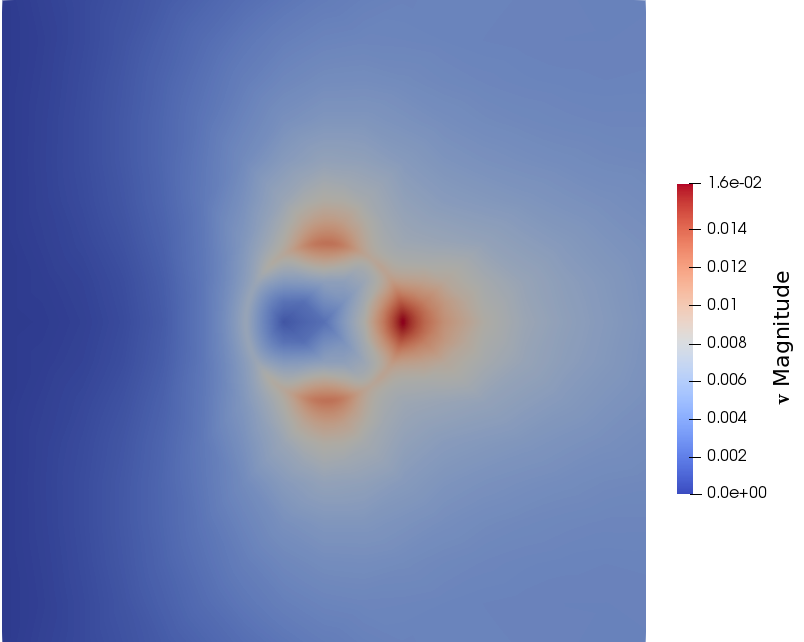}\hspace*{+10pt}
		\includegraphics[width =
		0.35\textwidth]{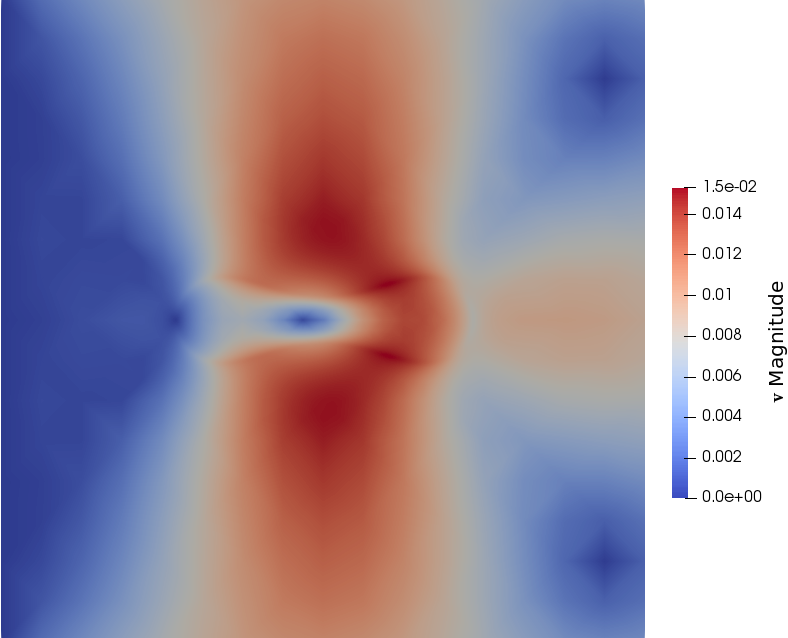}
		\caption{Velocity profiles for different viscosities: The velocity magnitude at time $t=10$ with $\beta=0.1$,
			$\nu=0$ and a no-slip boundary condition on the left boundary, left for
			$\eta=0.1$, right for $\eta=10$.}
		\label{3_FIG_7}
	\end{figure}
	
	 We also investigate the influence of different viscosities
	for the no-slip boundary condition. We denote by $\eta_+\coloneqq \eta(1)$ and $\eta_-\coloneqq \eta(-1)$
	the viscosities in the tumour and healthy phase, respectively. In
	Figure \ref{3_FIG_9}, we show the tumour at time $t=10$ for different cases.
	\begin{figure}[!h]
		\centering
		\includegraphics[width =
		0.22\textwidth]{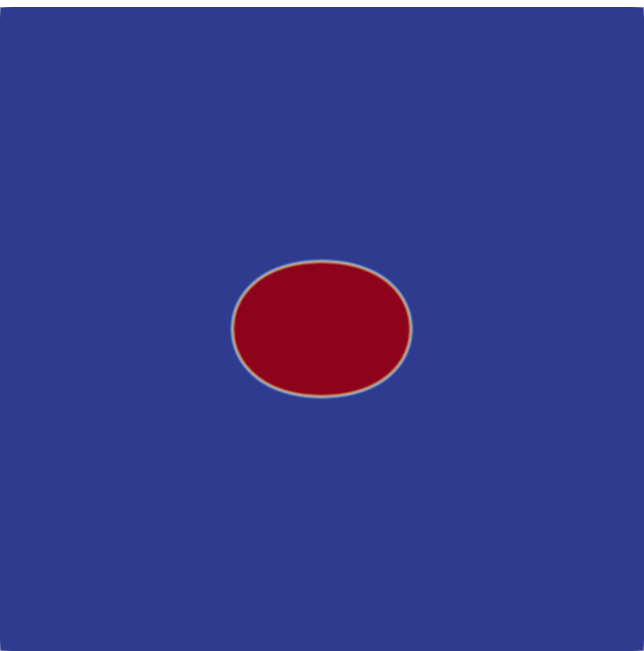}\hspace*{+10pt}
		\includegraphics[width =
		0.22\textwidth]{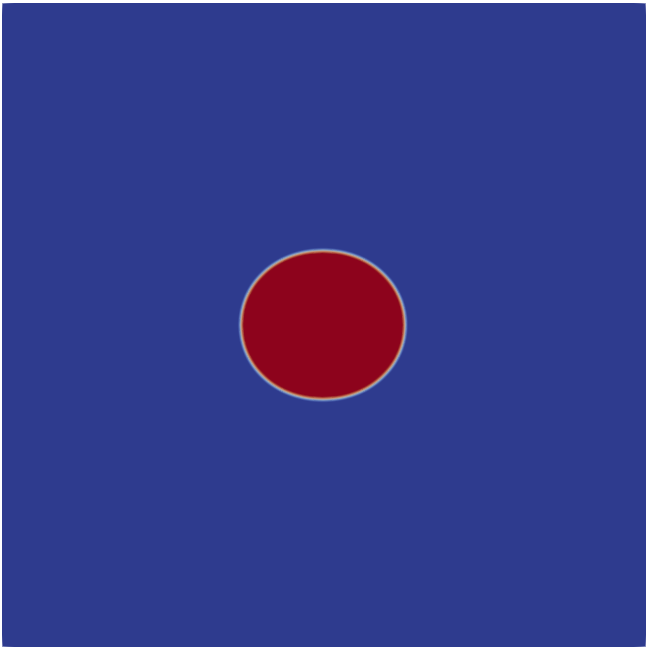}\hspace*{+10pt}
		\includegraphics[width =
		0.22\textwidth]{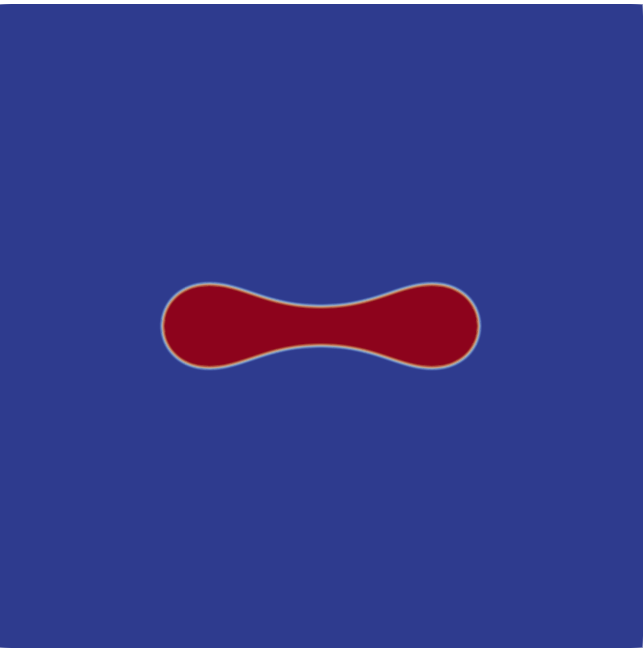}\hspace*{+10pt}
		\includegraphics[width =
		0.22\textwidth]{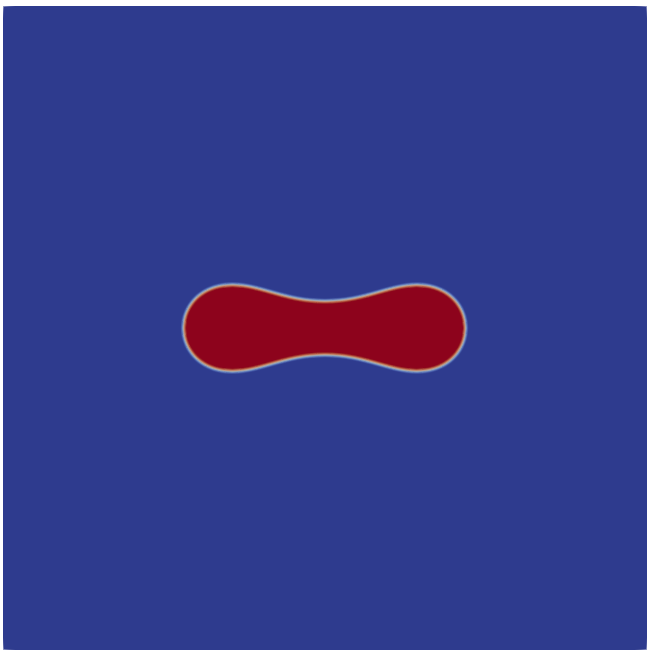}
		\caption{Influence of viscosity contrast: Tumour at time $t=10$ with $\beta=0.1$, $\nu=0$ and a
			no-slip b.\,c. on the left boundary, with $\eta_- = 0.01$, $\eta_+ = 1$;
			$\eta_- = 1$, $\eta_+ = 0.01$; $\eta_- = 0.01$, $\eta_+ = 10$; $\eta_- =
			10$, $\eta_+ = 0.01$.}
		\label{3_FIG_9}
	\end{figure}
	\newline It can be seen that a large difference between the viscosities
	leads to a more interesting evolution. Moreover, instabilities are more
	pronounced if the viscosity in the surroundings is lower than in the
	tumour tissue. Thus, the tumour tends to grow towards directions with
	least resistance. This effect has also been observed in a theoretical
	analysis in \cite{FranksKing2}.
	\subsection{Influence of different initial profiles}
	Here we want to study the influence of different initial profiles. In particular, we will see that some modes of the perturbation of a sphere are stable while other modes are unstable. We always choose $\lambda = 0.02$, $\beta=0.01$ and leave the remaining parameters as in \eqref{3_NUMS_Parameters}. As initial data we choose \eqref{eq:phi0} with $r$ replaced by the following different choices
	\begin{align*}
	r_1(\mathbf x) &= |\mathbf x| - (\tfrac12 +
	\tfrac1{40}\cos(6\theta)),\\
	r_2(\mathbf x) &= |\mathbf x| - (\tfrac12 +
	\tfrac1{40}\cos(12\theta-\tfrac{\pi}{9})),\\
	r_3(\mathbf x) &= |\mathbf x| - (\tfrac12 + 10^{-3}[\cos(2\theta)
	+ \tfrac54\cos(6\theta - \tfrac\pi{12})
	+ \tfrac34\cos(8\theta - \tfrac\pi7)]),\\
	r_4(\mathbf x) & = |\mathbf x| - (\tfrac12 + 10^{-3}[\cos(12\theta)
	+ \tfrac54\cos(7\theta - \tfrac\pi{12})
	+ \tfrac34\cos(8\theta - \tfrac\pi7)]).
	\end{align*}
	We show the evolution for the initial profile with $r_1(\cdot)$ in Figure \ref{Figure_noise6}, where we see that a 6-fold perturbation leads to six enhanced fingers.
	\begin{figure}[!h]
		\centering
		\includegraphics[width =
		0.22\textwidth]{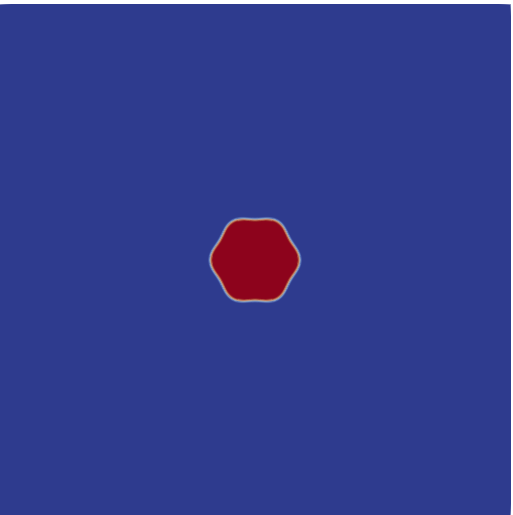}\hspace*{+10pt}
		\includegraphics[width =
		0.22\textwidth]{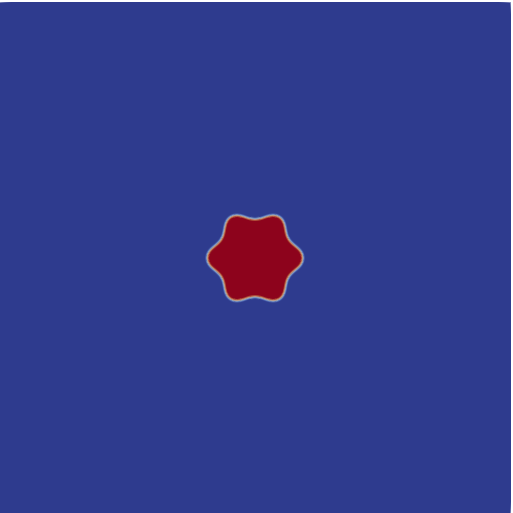}\hspace*{+10pt}
		\includegraphics[width =
		0.22\textwidth]{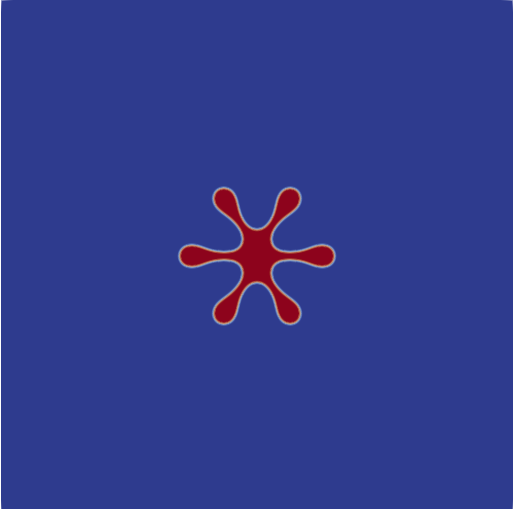}\hspace*{+10pt}
		\includegraphics[width =
		0.22\textwidth]{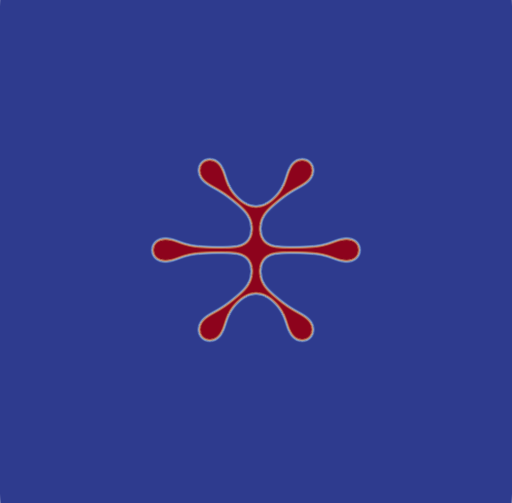}
		\caption{Influence of initial profile I: Tumour at time $t=0,0.3,1,1.6$ with $\eta=100$ and with the initial profile corresponding to $r_1$.}
		\label{Figure_noise6}
	\end{figure}
	
	The evolution for the initial profile $r_2(\cdot)$ is shown in
	Figure \ref{Figure_noise12}. The 12-fold perturbation is damped and the
	tumour region becomes nearly round. Finally, an instability with  four
	enhanced fingers arises.
	\begin{figure}[!h]
		\centering
		\includegraphics[width =
		0.22\textwidth]{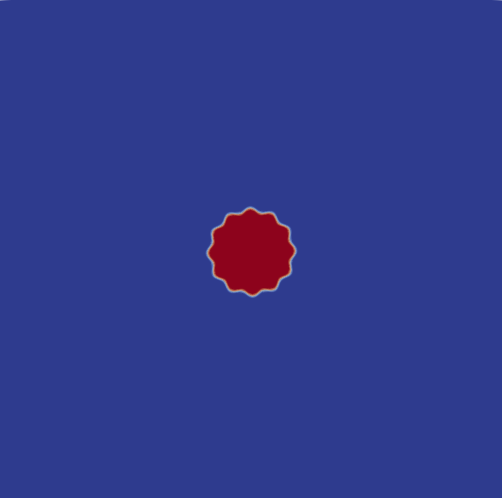}\hspace*{+10pt}
		\includegraphics[width =
		0.22\textwidth]{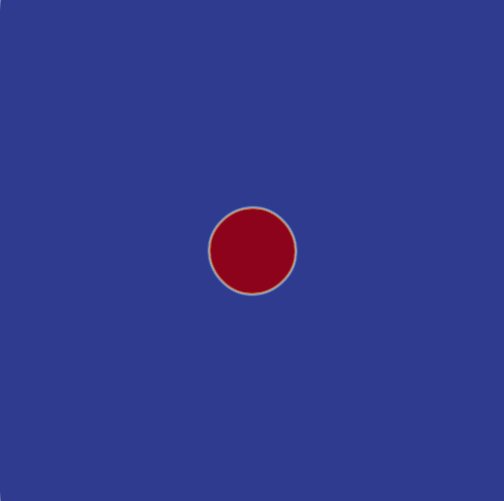}\hspace*{+10pt}
		\includegraphics[width =
		0.22\textwidth]{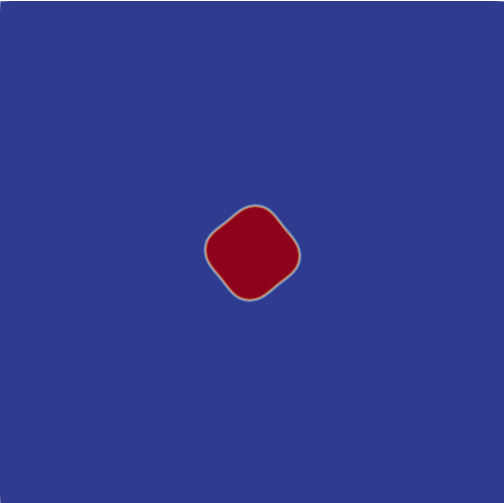}\hspace*{+10pt}
		\includegraphics[width =
		0.22\textwidth]{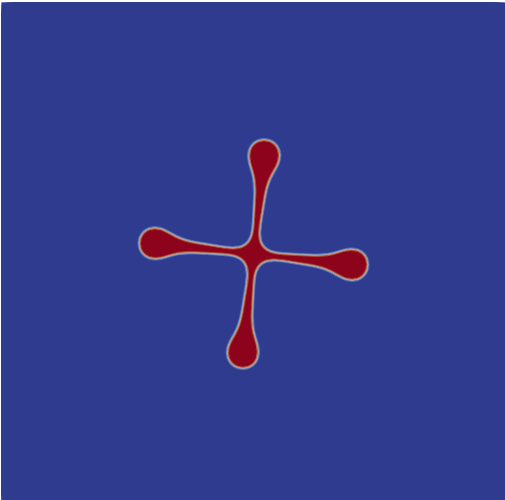}
		\caption{Influence of initial profile II: Tumour at time $t=0,0.7,1.2,2.6$ with $\eta=100$ and with the initial profile corresponding to $r_2$.}
		\label{Figure_noise12}
	\end{figure}
	
	Next, we show the evolution for the initial profile $r_3(\cdot)$ in
	Figure \ref{Figure_noise-2}. Here, six enhanced fingers  evolve and the final tumour is asymmetric.
	\begin{figure}[!h]
		\centering
		\includegraphics[width =
		0.22\textwidth]{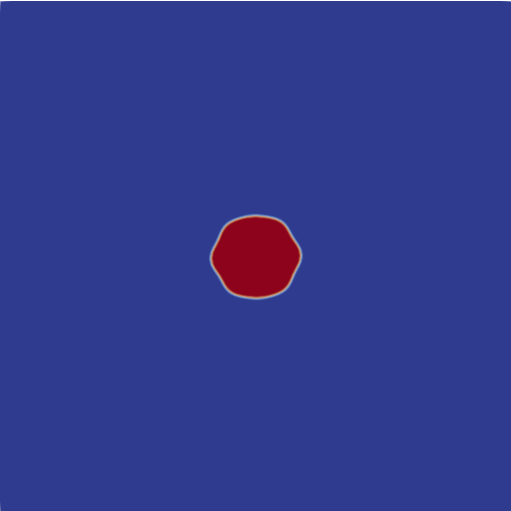}\hspace*{+10pt}
		\includegraphics[width =
		0.22\textwidth]{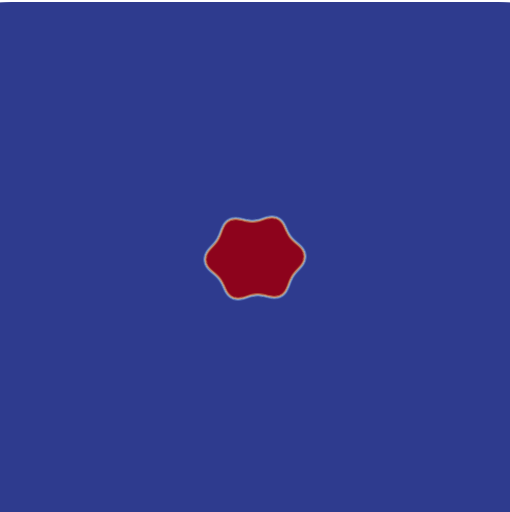}\hspace*{+10pt}
		\includegraphics[width =
		0.22\textwidth]{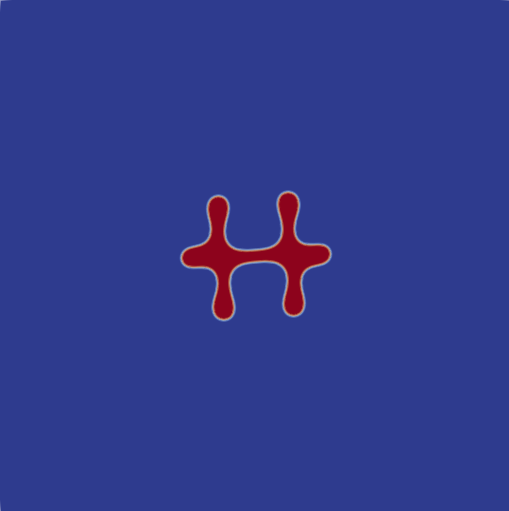}\hspace*{+10pt}
		\includegraphics[width =
		0.22\textwidth]{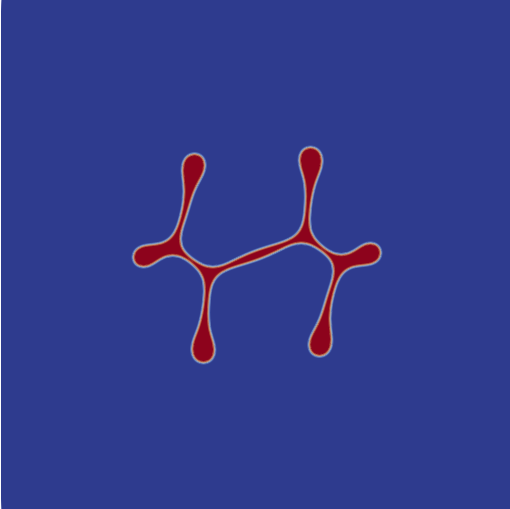}
		\caption{Influence of initial profile III: Tumour at time $t=0,0.5,1.2,2.4$ with $\eta=100$ and with the initial profile corresponding to $r_3$.}
		\label{Figure_noise-2}
	\end{figure}
	
	Finally, we show the evolution corresponding to $r_4(\cdot)$ in Figure
	\ref{Figure_noise-3}. Similar as in Figure \ref{Figure_noise12}, four
	fingers evolve and two of them are more elongated, and the final tumour
	is quite  asymmetric.
	\begin{figure}[!h]
		\centering
		\includegraphics[width =
		0.22\textwidth]{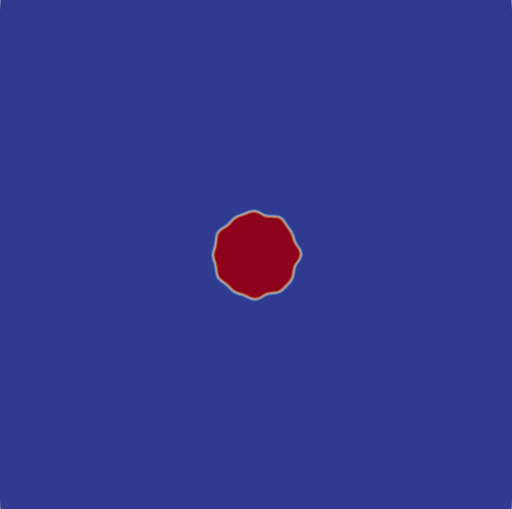}\hspace*{+10pt}
		\includegraphics[width =
		0.22\textwidth]{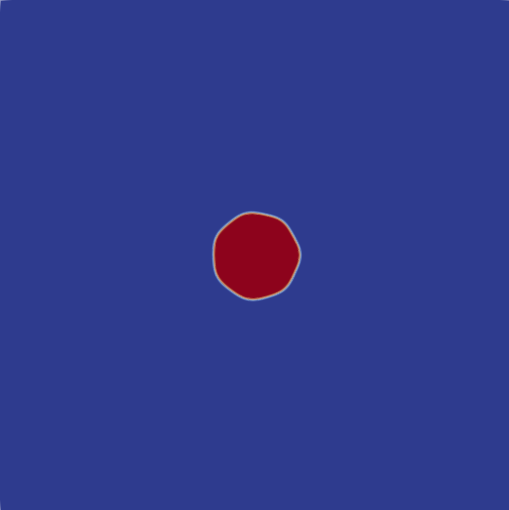}\hspace*{+10pt}
		\includegraphics[width =
		0.22\textwidth]{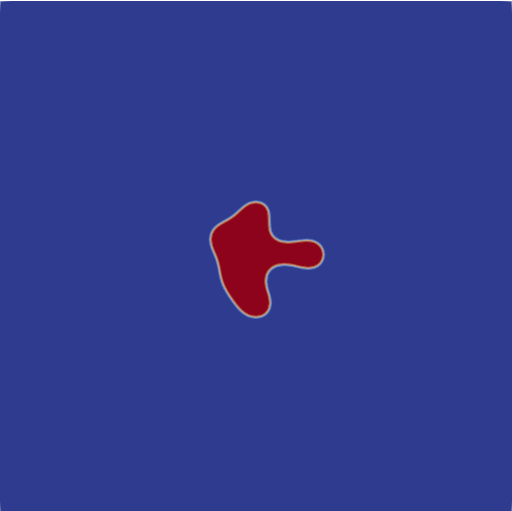}\hspace*{+10pt}
		\includegraphics[width =
		0.22\textwidth]{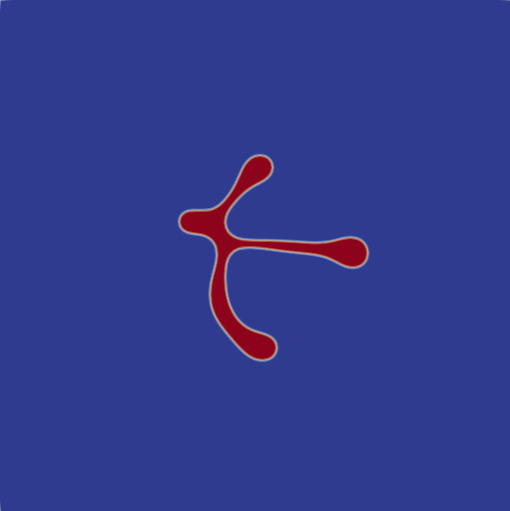}
		\caption{Influence of initial profile IV: Tumour at time $t=0,0.3,1.3,2.3$ with $\eta=0.01$ and with the initial profile corresponding to $r_4$.}
		\label{Figure_noise-3}
	\end{figure}
	\section*{Acknowledgments} The authors gratefully acknowledge the
	support by the RTG 2339 ``Interfaces, Complex Structures, and Singular
	Limits'' of the German Science Foundation (DFG) and by the Regensburger
	Universit\"atsstiftung Hans Vielberth.
	
	\newpage
	\printbibliography

\end{document}